\definecolor{indigo}{rgb}{0.29, 0.0, 0.51}  
\theoremstyle{plain}
\newtheorem{theorem}{Theorem}
\newtheorem{corollary}[theorem]{Corollary}
\newtheorem{proposition}[theorem]{Proposition}
\newtheorem{lemma}[theorem]{Lemma}
\theoremstyle{definition}
\theoremstyle{remark}
\newtheorem{remark}[theorem]{Remark}
\numberwithin{theorem}{section}
\newcommand{\dfn}[1]{{\em #1}}        
\newcommand{\R}{\mathbb{R}}           
\newcommand{\Q}{\mathbb{Q}}           
\newcommand{\Z}{\mathbb{Z}}           
\newcommand{\N}{\mathbb{N}}           
\DeclareMathOperator{\bd}{\partial}   
\newcommand{\modp}[1]{\,(\!\!\!\!\!\!\mod #1)}
\newcommand*\bigcdot{\mathpalette\bigcdot@{0.6}}
\newcommand*\bigcdot@[2]{\mathbin{\vcenter{\hbox{\scalebox{#2}{$\m@th#1\bullet$}}}}}
\DeclareMathOperator{\Tight}{Tight}
\DeclareMathOperator\tb{tb}                   
\DeclareMathOperator\rot{rot}                 
\DeclareMathOperator\lk{lk}  			
\DeclareMathOperator\self{sl}                 
\begin{document}

\title{Existence and construction of non-loose knots}

\author{Rima Chatterjee}

\author{John B. Etnyre}

\author{Hyunki Min}

\author{Anubhav Mukherjee}

\address{School of Mathematics \\ Georgia Institute of Technology \\  Atlanta, GA}
\email{etnyre@math.gatech.edu}

\address{Department of Mathematics \\ The University of California, Los Angeles, CA}
\email{hkmin27@math.ucla.edu}

\address{Department of Mathematics \\ Princeton University, Princeton, NJ}
\email{anubhavmaths@princeton.edu}

\address{Department of Mathematics \\ University of Cologne, Germany}
\email{rchatt@math.uni-koeln.de}


\begin{abstract}
In this paper we give necessary and sufficient conditions for a knot type to admit non-loose Legendrian and transverse representatives in some overtwisted contact structure, classify all non-loose rational unknots in lens spaces, and discuss conditions under which non-looseness is preserved under cabling. 
\end{abstract}

\maketitle

\section{Introduction}

Over the last decade, there has been a great deal of study of non-loose Legendrian and transverse knots\footnote{These are Legendrian and transverse knots in overtwisted contact manifolds for which the contact structure restricted to their complement is tight.} in overtwisted contact $3$-manifolds. Most of these results have been either a partial or complete classification of Legendrian knots in certain knot types. For example, \cite{EliashbergFraser98, EtnyreMinMukherjee22pre} classified non-loose Legendrian and transverse unknots and torus knots, respectively, and for various ranges of classical invariants \cite{GeigesOnaran20a, Matkovic20Pre} had previously classified non-loose torus knots. In \cite{GeigesOnaran20b}, non-loose Hopf links were classified, and in \cite{GeigesOnaran15} rational unknots were classified in some lens spaces. In \cite{Ghiggini06b, Ghiggini06, LiscaOzsvathStipsiczSzab09}, various constructions were given and general properties of non-loose knots were studied in \cite{Etnyre13}.

This paper studies the general properties and constructions of non-loose knots. In particular, we give necessary and sufficient conditions for a knot type to admit non-loose representatives, which corrects an error in the second author's paper \cite{Etnyre13}. To achieve this, we should understand certain facts about non-loose representatives of rational unknots, the knot types of cores of Heegaard tori in lens spaces. In proving these facts, we completely classify non-loose rational unknots in any overtwisted lens space, extending the work in \cite{GeigesOnaran15}. We then study the cabling operation and show when ``standard cables" (see Section~\ref{sec:construction}) of non-loose knots in overtwisted manifolds are non-loose. 

\subsection{Existence of non-loose knots}\label{sec:existence}

The most basic question about non-loose Legendrian and transverse knots is when a smooth knot type admits non-loose representatives. We have a complete answer to that question.  Before stating the result we recall that a rational unknot $K$ in $M$ is a knot that has $D^2$ as a rational Seifert surface, see \cite{BakerEtnyre12}. In particular, when $M$ is irreducible, then $K$ is the core of a Heegaard solid torus in a lens space (or $S^3$).  We also note that given any knot $K$ in a $3$-manifold $M$, there is a decomposition of $M$ into $M'\#M''$ so that $K\subset M''$ and $M''\setminus K$ is irreducible. 

\begin{theorem}\label{characterize}
  Let $M$ be an oriented $3$--manifold and $K$ a knot in $M$. Let $M = M' \# M''$ where $K \subset M''$ and $M'' \setminus K$ is irreducible. Then $K$ admits a non-loose Legendrian representative in some overtwisted contact structure on $M$ if and only if 

  \begin{itemize}
    \item $K$ does not intersect an essential sphere transversely once, and
    \item $M'$ admits a tight contact structure.
  \end{itemize}

  Moreover, if $K$ is not the unknot in $M$ and admits a non-loose Legendrian representative, then $K$ admits non-loose Legendrian representatives in at least two overtwisted contact structures. 

  Also, $K$ admits a non-loose transverse representative in some overtwisted contact structure on $M$ if and only if 
  \begin{itemize}
    \item $K$ does not intersect an essential sphere transversely once, 
    \item $K$ is not a rational unknot, and   
    \item $M'$ admits a tight contact structure.
  \end{itemize}

  Moreover, if $K$ admits a non-loose transverse representative, then $K$ admits non-loose transverse representatives in at least two overtwisted contact structures. 
\end{theorem}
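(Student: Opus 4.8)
\smallskip
\noindent\emph{Overview and the easy direction of necessity.}
The proof splits into necessity and sufficiency, and in both directions the first step is to isolate the exceptional case. If $\Lambda$ is a non-loose Legendrian representative of $K$ in $(M,\xi)$, then $\xi$ is overtwisted on $M$ but tight on $M\setminus\nu\Lambda=M'\#(M''\setminus\nu\Lambda)$; restricting to the $M'$-side, isotoping the separating sphere to be convex with connected dividing set, and filling the ball (the tight contact structure on $B^3$ with such boundary exists and is unique by Eliashberg) shows that $M'$ admits a tight contact structure. For the first bullet, I would observe the purely topological fact that, under the standing hypotheses, ``$K$ meets an essential sphere of $M$ transversely once'' is equivalent to $(M'',K)\cong(S^1\times S^2,\, S^1\times\{pt\})$: such a sphere can be isotoped off the connected-sum sphere (it cannot be parallel to it, since it meets $K$), hence into $M''$, where it stays essential; removing $\nu K$ then exhibits a compressing disk for the torus $\partial\nu K$ inside the irreducible manifold $M''\setminus\nu K$, which forces $M''\setminus\nu K$ to be a solid torus whose meridian is the meridian of $K$. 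So the first bullet amounts to: $(M'',K)$ is not the core of $S^1\times S^2$.

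\smallskip
\noindent\emph{The hard part of necessity.}
It remains to show that the core of an overtwisted $S^1\times S^2$ has no non-loose Legendrian (or transverse) representative. For such a $\Lambda$, the complement is a solid torus $U$ whose meridian agrees with that of $\nu\Lambda$ (the two halves of the essential $S^2$), so $(S^1\times S^2,\xi)=U\cup_{T^2}\nu\Lambda$ is a gluing of two solid tori along matching meridians, with $\nu\Lambda$ a standard Legendrian neighbourhood. If $\xi|_U$ were tight, a convex-surface analysis of such gluings, using Honda's description of the convex boundary-parallel tori inside a tight solid torus to match the two sides into a single minimally twisting decomposition, shows the result must be the unique tight contact structure on $S^1\times S^2$ — contradicting overtwistedness. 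For the transverse statement one must additionally rule out rational unknots: a non-loose transverse representative of a rational unknot would have a non-loose Legendrian approximation of the same type, and the classification of non-loose Legendrian rational unknots established later in the paper shows none of them arises as the Legendrian approximation of a non-loose transverse knot.

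\smallskip
\noindent\emph{Sufficiency.}
For the converse it is enough to build a non-loose representative inside $M''$ and take the contact connected sum with a tight contact structure on $M'$ (which exists by hypothesis): the sum is overtwisted because its $M''$-summand is, and its restriction to $M\setminus\nu\Lambda=M'\#(M''\setminus\nu\Lambda)$ is a connected sum of tight pieces, hence tight. Since $M''\setminus\nu K$ is irreducible with nonempty boundary it carries a taut foliation (Gabai), hence a tight contact structure $\eta$ with convex boundary (Eliashberg--Thurston), say with dividing set two curves of slope $s$ on $\partial\nu K$. Filling $\nu K$ by a tight solid torus $V$ with boundary slope $s$ and with the meridian of $K$ produces a contact structure $\xi''$ on $M''$ whose core $\Lambda$ is a Legendrian representative of $K$ with complement $\eta$ together with a thin minimally twisting collar, hence tight; so $\Lambda$ is non-loose as soon as $\xi''$ is overtwisted. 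The key point is that, because $\partial\nu K$ is incompressible in $M''\setminus\nu K$ — equivalently, because $(M'',K)$ is not the core of $S^1\times S^2$ — one can choose $V$ among the tight solid tori with the prescribed boundary (indexed, by Honda, by the rotation number of the core) so that the relative Euler class of the gluing $(M''\setminus\nu K)\cup V$ is incompatible with tightness, forcing $\xi''$ overtwisted. When $K$ is not a rational unknot, the transverse pushoff of $\Lambda$ is again non-loose (the collar between $\nu\Lambda$ and the transverse neighbourhood is thin, tight, and glues on without an overtwisted disk), giving the transverse statement; when $K$ is a rational unknot one instead builds a non-loose Legendrian directly, exactly as Eliashberg--Fraser \cite{EliashbergFraser98} do for the unknot, and there is no non-loose transverse representative by the previous paragraph.

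\smallskip
\noindent\emph{The ``at least two'' refinements, and the main obstacle.}
When $K$ is not the unknot, the construction has real freedom — two different ``incompatible'' fillings $V$, or distinct integer slopes $s$ obtained by stabilizing $\partial\nu K$ — and the resulting non-loose representatives lie in overtwisted contact structures distinguished by $d_3$ or by the induced $\spinc$ structure; for the unknot this collapses (the non-loose Legendrian unknots all lie in one overtwisted $S^3$ by \cite{EliashbergFraser98}), which is exactly why that case is excluded. The main obstacle is twofold and concerns precisely the incompressibility hypothesis that was missing from \cite{Etnyre13}: on the necessity side, proving that the core of an overtwisted $S^1\times S^2$ is loose; and on the sufficiency side, making rigorous the choice of filling $V$ so that the glued structure on $M''$ is provably overtwisted while the complement of the core stays tight.
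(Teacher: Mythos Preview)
Your necessity argument is essentially the paper's: the $M'$-tightness via Colin's connected-sum theorem, the reduction of the first bullet to $(M'',K)\cong(S^1\times S^2,\text{core})$, and the uniqueness argument for that core (the paper's Lemma~\ref{lem:s1s2}) all match.

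The sufficiency direction, however, has a genuine gap. Your mechanism for forcing the glued contact structure $\xi''$ on $M''$ to be overtwisted --- choosing the solid-torus filling $V$ so that ``the relative Euler class is incompatible with tightness'' --- is not a valid argument in general. There is no universal Euler-class obstruction to tightness on an arbitrary closed $3$-manifold; Bennequin-type inequalities constrain $\langle e(\xi),[\Sigma]\rangle$ only when one has embedded surfaces of controlled genus to test against, and even then you would need to show that the finitely many tight fillings $V$ (for a fixed boundary slope $s$) actually realize a violating class, which can simply fail. The paper avoids this entirely: rather than Gabai plus Eliashberg--Thurston with an uncontrolled boundary slope, it invokes \cite[Theorem~6.1]{HondaKazezMatic00} to obtain a universally tight contact structure on $C=M''\setminus\nu K$ whose convex boundary has \emph{meridional} dividing curves. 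Once the boundary slope is the meridian, extending by a basic slice to an integer slope and then by a tight solid torus automatically produces an overtwisted structure on $M''$, because the Legendrian divides on the meridional-slope torus bound meridional disks in the reglued neighbourhood. This is precisely where incompressibility of $\partial C$ is used --- it is the hypothesis of the Honda--Kazez--Mati\'c result --- and it is exactly the correction to \cite{Etnyre13} that the paper advertises.

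Your transverse argument also runs in the wrong direction. The transverse push-off of a non-loose Legendrian need not be non-loose (for rational unknots it never is); the implication that does hold is that a Legendrian approximation of a non-loose transverse knot is non-loose \cite[Proposition~1.2]{Etnyre13}. Accordingly the paper constructs the non-loose \emph{transverse} representatives first --- the construction of \cite{Etnyre13}, adding Giroux torsion to the meridional-sloped $(C,\overline\xi)$, yields infinitely many --- and then takes Legendrian approximations. Finally, the rational-unknot case and the ``at least two overtwisted contact structures'' refinement are not handled by vague reference to Eliashberg--Fraser or to ``different fillings $V$''; the paper disposes of the first by the full classification in Section~\ref{sec:ruk} (Corollary~\ref{courseRU}), and of the second by a half-Lutz-twist argument showing the $d_3$-invariant changes parity.
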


\begin{remark}
  In Theorem~\ref{characterize} and Corollary~\ref{characterize-irreducible}, rational unknots include the unknot in $M$.
\end{remark}

For irreducible $3$-manifolds, we have a simpler statement that is an immediate corollary of the previous theorem.

\begin{corollary}\label{characterize-irreducible}
Let $M$ be an irreducible oriented $3$--manifold and $K$ a knot in $M$. Then $K$ admits a non-loose Legendrian representative in some overtwisted contact structure on $M$ if and only if 

\begin{itemize}
  \item $K$ is not contained in a $3$--ball, or
  \item $M$ admits a tight contact structure. 
\end{itemize}  

Moreover, if $K$ is not the unknot in $M$ and admits a non-loose Legendrian representative, then $K$ admits non-loose Legendrian representatives in at least two overtwisted contact structures. 

Also, $K$ admits a non-loose transverse representative in some overtwisted contact structure on $M$ if and only if 

\begin{itemize}
  \item $K$ is not contained in a $3$--ball or $M$ admits a tight contact structure, and   
  \item $K$ is not a rational unknot.
\end{itemize}

Moreover, if $K$ admits a non-loose transverse representative, then $K$ admits non-loose transverse representatives in at least two overtwisted contact structures. \hfill\qed
\end{corollary}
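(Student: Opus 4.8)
The plan is to obtain Corollary~\ref{characterize-irreducible} as a direct translation of Theorem~\ref{characterize} in the special case that $M$ is irreducible. The first observation is that an irreducible $M$ contains no essential sphere at all, since by definition every embedded $2$-sphere bounds a ball. Hence the hypothesis that ``$K$ does not intersect an essential sphere transversely once'' is automatically satisfied and may be dropped from both the Legendrian and transverse criteria of Theorem~\ref{characterize}; the only hypothesis left to interpret is that ``$M'$ admits a tight contact structure,'' where $M = M'\# M''$ is the decomposition with $K\subset M''$ and $M''\setminus K$ irreducible.

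I would then identify $M'$ in this decomposition. If $K$ is not contained in a $3$-ball, then every embedded sphere in $M\setminus K$ bounds a ball in $M$, and since $K$ is connected and disjoint from the sphere it must lie outside that ball; thus $M\setminus K$ is itself irreducible, the decomposition is the trivial one $M = S^3\# M$, and $M' = S^3$. If $K$ is contained in a $3$-ball, then shrinking to a ball $B$ about $K$ and capping off gives $M = M\# S^3$ with $K$ in the $S^3$ summand and $M''\setminus K = S^3\setminus K$ irreducible (knot exteriors in $S^3$ are irreducible), so $M' = M$; the sole overlap with the previous case is $M\cong S^3$, where in any event $M' = S^3$. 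Since $S^3$ carries a tight contact structure, the condition that $M'$ admit a tight contact structure is automatic in the first case, and in the second case it becomes exactly that $M$ admit a tight contact structure. Consequently ``$M'$ admits a tight contact structure'' is equivalent to ``$K$ is not contained in a $3$-ball, or $M$ admits a tight contact structure.''

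Substituting this equivalence, along with the vacuity of the essential-sphere condition, into the two biconditionals of Theorem~\ref{characterize} yields precisely the two criteria of the corollary, the transverse one simply retaining the extra clause that $K$ not be a rational unknot; and the two ``at least two overtwisted contact structures'' assertions are inherited verbatim. The only step that needs real care is the identification of $M'$: one must check that the connected-sum decomposition indeed has $M'$ equal to $S^3$ or to $M$ in the two cases and keep track of the degenerate possibility $M\cong S^3$. The rest is a formal substitution.
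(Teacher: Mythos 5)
Your proposal is correct and follows the paper's route: the paper simply declares the corollary immediate from Theorem~\ref{characterize}, and your write-up supplies exactly the routine verification behind that claim (vacuity of the essential-sphere condition in an irreducible manifold, and the identification of $M'$ as $S^3$ or $M$ according to whether $K$ lies in a ball). No gaps.
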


In \cite{Etnyre13}, the second author claimed that all knots in any irreducible $3$-manifold admitted non-loose Legendrian and transverse representatives. This is not the case and the proof in \cite{Etnyre13} will be corrected in the proof of Theorem~\ref{characterize}.

\begin{remark}
  Notice that all knot types in $S^3$ admit non-loose Legendrian representatives and all knot types except the unknot in $S^3$ admit non-loose transverse representatives.
\end{remark}

\begin{remark}
  According to Theorem~\ref{characterize}, the only knot type in a $3$-manifold $M$ (possibly) admitting non-loose Legendrian representatives in only one overtwisted contact structure is the unknot. Indeed, due to the classification of Eliashberg and Fraser \cite{EliashbergFraser09}, we know that the unknot in $S^3$ can be represented by a non-loose Legendrian knot in only one contact structure on $S^3$, namely $\xi_1$.\footnote{Recall there are an integers worth of overtwisted contact structures on $S^3$ denoted by $\xi_n$ where $\xi_0$ is in the same homotopy class of plane field as the tight contact structure.}
\end{remark}

Given what is known about the existence of tight contact structures on $3$--manifolds ({see for example, }\cite{EliashbergThurston96}), the above theorem shows that if none of the summands of $M$ is a rational homology sphere, then any knot type admits non-loose representatives in some overtwisted contact structure. We can also completely determine when knots in Seifert fibered spaces admit non-loose representatives. 
\begin{corollary}\label{maincor}
Let $M_n$ be the result of $(2n-1)$--surgery on the $(2,2n+1)$--torus knot for $n>0$. Any knot in a Seifert fibered spaces admit non-loose Legendrian representatives unless the knot is contained in a ball in $M_n$ or is $S^1\times \{pt\}$ in $S^1\times S^2$.  
\end{corollary}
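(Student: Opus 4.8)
The plan is to combine Theorem~\ref{characterize} (and its Corollary~\ref{characterize-irreducible}) with two pieces of $3$--manifold input: first, that a closed oriented Seifert fibered $3$--manifold is irreducible unless it is $S^1\times S^2$ or $\mathbb{RP}^3\#\mathbb{RP}^3$; and second, the Lisca--Stipsicz theorem that a closed oriented Seifert fibered $3$--manifold carries a (positive) tight contact structure if and only if it is not orientation-preserving diffeomorphic to $M_n$ for any $n\geq 1$. I will also use the standard facts that lens spaces and $S^1\times S^2$ carry tight contact structures and that a connected sum of contact manifolds with tight contact structures is tight; these imply that any connected sum of copies of $S^3$, $S^1\times S^2$, and lens spaces is tight, so that in particular $S^1\times S^2$ and $\mathbb{RP}^3\#\mathbb{RP}^3$ carry tight contact structures and hence are not equal to any $M_n$, while each $M_n$ (a small Seifert fibered space with three exceptional fibers) is irreducible.

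First I would dispatch the irreducible case. If $M$ is an irreducible Seifert fibered space and $K\subset M$, then Corollary~\ref{characterize-irreducible} says that $K$ admits a non-loose Legendrian representative if and only if $K$ is not contained in a $3$--ball or $M$ carries a tight contact structure, and by the above the latter alternative fails exactly when $M=M_n$. So among knots in irreducible Seifert fibered spaces, the ones with no non-loose Legendrian representative are precisely those contained in a ball in some $M_n$.

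Next I would treat the two reducible cases $M=S^1\times S^2$ and $M=\mathbb{RP}^3\#\mathbb{RP}^3$ via Theorem~\ref{characterize}. Write $M=M'\#M''$ with $K\subset M''$ and $M''\setminus K$ irreducible; since the connected-sum spheres used split off prime summands of $M$, the manifold $M'$ is a connected sum of prime summands of $M$, hence a connected sum of copies of $S^3$ together with copies of $\mathbb{RP}^3$ (when $M=\mathbb{RP}^3\#\mathbb{RP}^3$) or of $S^1\times S^2$ (when $M=S^1\times S^2$), and so $M'$ is tight in every case. Thus Theorem~\ref{characterize} reduces the question to whether $K$ meets an essential sphere transversely in a single point. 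In $\mathbb{RP}^3\#\mathbb{RP}^3$ every essential sphere is isotopic to the connected-sum sphere, which is null-homologous, so no knot can meet an essential sphere transversely once and every knot has a non-loose Legendrian representative. In $S^1\times S^2$ the only essential sphere up to isotopy is $S=\{\mathrm{pt}\}\times S^2$; cutting $S^1\times S^2$ along $S$, a knot meeting $S$ transversely once becomes a properly embedded arc in $S^2\times[0,1]$ joining the two boundary spheres, and such an arc is standard, so $K$ is isotopic to the fiber $S^1\times\{\mathrm{pt}\}$; conversely $S^1\times\{\mathrm{pt}\}$ does meet $S$ transversely once. Hence in $S^1\times S^2$ the unique knot with no non-loose Legendrian representative is $S^1\times\{\mathrm{pt}\}$, and assembling the three cases gives the corollary.

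The one genuinely deep ingredient is the Lisca--Stipsicz classification of Seifert fibered spaces admitting tight contact structures; the rest is bookkeeping with Theorem~\ref{characterize} once one has the classical list of essential spheres in $S^1\times S^2$ and in $\mathbb{RP}^3\#\mathbb{RP}^3$, the standardness of spanning arcs in $S^2\times[0,1]$, and the observation that every summand $M'$ that can occur is tight. The step needing the most care is the $S^1\times S^2$ analysis, namely pinning down exactly which knots realize a single transverse intersection with the essential sphere.
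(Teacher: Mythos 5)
Your proposal is correct and follows essentially the same route as the paper: Hatcher's fact that the only reducible Seifert fibered spaces are $S^1\times S^2$ and $\mathbb{R}P^3\#\mathbb{R}P^3$, the Lisca--Stipsicz theorem identifying the $M_n$ as the only Seifert fibered spaces without tight contact structures, Corollary~\ref{characterize-irreducible} for the irreducible case, and Theorem~\ref{characterize} for the two reducible cases. The extra details you supply (tightness of the possible summands $M'$, separating spheres in $\mathbb{R}P^3\#\mathbb{R}P^3$, and the light-bulb argument pinning down $S^1\times\{pt\}$) are exactly what the paper's terser appeal to Theorem~\ref{characterize} implicitly uses, so there is no substantive difference.
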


So the only irreducible $3$-manifolds in which we do not know if a knot can admit a non-loose representative are hyperbolic homology spheres. 

We note that when a knot $K$ in an irreducible manifold $M$ admits a non-loose Legendrian representative in some overtwisted contact structure on $M$, we can say quite a bit more about some of the non-loose representatives. To state our result, we recall some notations. Given a contact structure $\xi$ on a $3$--manifold $M$ and a knot type $K$, we denote by $\mathcal{L}(K)$ the set of all Legendrian knots realizing $K$ up to co-orientation preserving contactomorphism, smoothly isotopic to the identity. If $K$ is null-homologous and oriented, its classical invariants are defined. In this case, let $\mathcal{L}_{(r,t)}(K)$ be the set of the Legendrian representatives with the rotation number equal to $r$ and the Thurston-Bennequin invariant equal to $t$. 

\begin{theorem}\label{allhaveinfinity}
Let $K$ be a knot in an oriented $3$--manifold $M$ that admits non-loose  Legendrian representatives. Assume $K$ is not a rational unknot. Then there are at least two overtwisted contact structures $\xi$ and $\xi'$ on $M$ in which $K$ admits infinitely many non-loose Legendrian representatives with any contact framing. 

In particular, if $K$ is a null-homologous and oriented knot, then in one of the contact structures the set $\mathcal{L}_{(\chi(K)+n,n)} (K)$ contains the non-loose Legendrian representatives $\{L_-^{n,i}\}_{i=0}^\infty$ for $n \in \mathbb{Z}$ and if $-K$ is smoothly isotopic to $K$ (where $-K$ is $K$ with the reversed orientation), then $\mathcal{L}_{(-\chi(K)-n,n)} (K)$ contains $\{L_+^{n,i}\}_{i=0}^\infty$, where 

\begin{enumerate}
  \item $L_\pm^{n,i}$ is obtained from $L_\pm^{n,i-1}$ by adding a convex Giroux torsion layer along a torus parallel to the boundary of a standard neighborhood of $L^{n,i-1}_\pm$.
  \item  $S_{\pm}(L_\pm^{n,i})$ is $L_\pm^{n-1,i}$ and $S_\mp(L_\pm^{n,i})$ is loose. 
\end{enumerate}

In the other contact structures, $\mathcal{L}_{(-\chi(K)+n,n)} (K)$ contains the non-loose Legendrian representatives $\{\overline L_-^{n,i}\}_{i=0}^\infty$ for $n \in \mathbb{Z}$ and if $-K$ is smoothly isotopic to $K$, then $\mathcal{L}_{(\chi(K)-n,n)}(K)$ contains $\{\overline L_+^{n,i}\}_{i=0}^\infty$, where 
\begin{enumerate}
\item $\overline L_\pm^{n,i}$  is obtained from $\overline L_\pm^{n,i-1}$ by adding a convex Giroux torsion layer along a torus parallel to the boundary of a standard neighborhood of $\overline L^{n,i-1}_\pm$.
\item $S_\pm(\overline L_\pm^{n,i})$ is $\overline L_\pm^{n-1,i}$  and $S_\mp(\overline L_\pm^{n,i})$ is loose. 
\end{enumerate}
\end{theorem}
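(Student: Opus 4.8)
The plan is to construct the desired non-loose representatives explicitly by combining two standard operations: forming a non-loose representative with a prescribed contact framing in a tight neighborhood, and then iteratively adding Giroux torsion layers. I would start from Theorem~\ref{characterize}, which tells us that since $K$ admits a non-loose Legendrian representative and is not a rational unknot, the irreducible summand $M''$ containing $K$ has the property that $K$ does not intersect an essential sphere once, and $M'$ carries a tight contact structure. The idea is to fix a tight contact structure $\xi_{M'}$ on $M'$, and on $M''$ to put a contact structure that is tight on the complement of $K$ but overtwisted on all of $M''$: take the unique tight contact structure on a standard neighborhood $N$ of $K$ with a chosen boundary slope, and extend it over $M''\setminus N$ by any tight contact structure on that complement (one exists because $M''\setminus K$ is irreducible and the condition on essential spheres lets us apply the relevant existence results — this is exactly where the argument of \cite{Etnyre13} is repaired). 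Making a connected sum with $\xi_{M'}$ and an overtwisted piece produces an overtwisted $\xi$ on $M$ in which $K$ sits as a non-loose Legendrian knot $L_-^{0,0}$; varying the boundary slope of $N$ gives all contact framings, and in the null-homologous case the classical invariants come out to be $(\chi(K)+n, n)$ after stabilizing appropriately, which pins down the subscripts in the statement.

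Next I would produce the family $\{L_-^{n,i}\}$. The index $i$ is handled by adding a convex Giroux torsion layer along a boundary-parallel torus inside a slightly enlarged standard neighborhood of $L_-^{n,i-1}$: this does not change the smooth knot type, does not change the contact framing (hence keeps the classical invariants fixed, giving membership in the same $\mathcal{L}_{(\chi(K)+n,n)}(K)$), and the resulting knots $L_-^{n,i}$ are pairwise non-isotopic through Legendrian knots because Giroux torsion of the complement is a contact-isotopy invariant — so we genuinely get infinitely many. Crucially, each $L_-^{n,i}$ remains non-loose: the complement still contains no overtwisted disk, since a torsion layer is tight and it is glued to the tight complement along a convex torus with matching dividing slope, so the whole complement stays tight; the overtwistedness of the ambient manifold is unaffected because it lives in the $M'$-or-added-overtwisted summand far from $K$. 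The index $n$ is obtained by negative stabilization: $S_-(L_-^{n,i})=L_-^{n-1,i}$ by construction, which handles item (1) for all $n\in\Z$ by starting at $n=0$ and stabilizing down, and building up for $n>0$ (or by choosing the initial framing accordingly). For item (2), that $S_+(L_-^{n,i})$ is loose, I would argue that positive stabilization creates an overtwisted disk in the complement — this is the phenomenon that distinguishes the two contact structures, and it follows from analyzing the bypass/half-Giroux-torsion picture created when one stabilizes in the ``wrong'' direction relative to how $\xi$ was assembled near $K$. The second contact structure $\xi'$ and the barred family $\{\overline L_\pm^{n,i}\}$ are obtained by the mirror construction (choosing the conjugate tight structure on the neighborhood $N$, equivalently swapping the roles of $S_+$ and $S_-$), giving the symmetric statement; the hypothesis that $-K$ is smoothly isotopic to $K$ is what lets the $L_+$ families exist with the stated rotation numbers, via orientation reversal.

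The parts I expect to be routine are: the Giroux-torsion-layer bookkeeping (standard since \cite{Etnyre13}), the computation of classical invariants in the null-homologous case, and the mirror-symmetry argument producing $\xi'$. The main obstacle is twofold. First, carefully establishing that the ``wrong'' stabilization $S_\mp$ produces a \emph{loose} knot: one must exhibit an overtwisted disk in the complement after stabilization, which requires understanding precisely how the contact structure near $K$ was glued — in particular one needs the neighborhood $N$ to have been chosen so that one stabilization direction ``uses up'' the tightness while the other creates torsion that can be pushed to an overtwisted disk; getting this compatible with the existence result for the tight structure on $M''\setminus N$ is the delicate point. Second, confirming non-looseness is preserved across \emph{all} the operations simultaneously — that stabilizing down (for the $n<0$ part of the family) and adding torsion layers never accidentally introduces an overtwisted disk into the complement — which comes down to the convex-surface gluing theory (the Colin/Honda gluing results) guaranteeing that a tight piece glued to a tight piece along a convex torus with consistent dividing curves stays tight.
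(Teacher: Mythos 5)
There is a genuine gap, and it is at the heart of the construction: your mechanism for making the ambient structure overtwisted destroys non-looseness. You propose to take a tight structure on a standard neighborhood $N$ of $K$, a tight structure on $M''\setminus N$, and then to connect sum ``with $\xi_{M'}$ and an overtwisted piece,'' asserting later that the overtwistedness ``lives in the $M'$-or-added-overtwisted summand far from $K$.'' But if the overtwisted disk lies in a summand disjoint from $K$, then it lies in the complement of $K$, so $K$ is loose by definition. (Also, simply gluing a tight $N$ to a tight $M''\setminus N$ need not be overtwisted at all, and you give no reason it is.) In the paper's construction the overtwistedness is forced to ``pass through'' $K$: one takes a universally tight structure on the knot exterior $C$ with meridional dividing curves (Theorem~6.1 of \cite{HondaKazezMatic00} -- this is where irreducibility and incompressibility of $\partial C$ are used), inserts convex Giroux torsion layers and a basic slice from $\infty$ to $n$ keeping the exterior tight (Proposition~4.2 of \cite{HondaKazezMatic02}), and only then glues the solid torus; the torsion layer contains a convex torus of meridional slope whose Legendrian divides bound disks in the glued solid torus, so every overtwisted disk meets a neighborhood of $K$ while $M\setminus K$ stays tight. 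Relatedly, you do not address why adding a torsion layer keeps you in the \emph{same} overtwisted $\xi$; the paper needs the observation that this is a full Lutz twist, hence does not change the isotopy class of an overtwisted structure.

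The second serious problem is your source of the second contact structure $\xi'$. You propose a ``mirror'' construction, swapping the roles of $S_+$ and $S_-$ (conjugate tight structure on $N$). But when $-K$ is isotopic to $K$ that operation just produces the other family $L_+^{n,i}=-L_-^{n,i}$ inside the \emph{same} contact structure, exactly as in the statement of the theorem; you offer no invariant showing the mirrored structure is not isotopic to $\xi$, and in general it is. The paper instead obtains $\xi'$ from the \emph{half-integer} Giroux torsion extensions and distinguishes $\xi$ from $\xi'$ by a $d_3$-parity argument: passing from integer to half-integer torsion is a half Lutz twist, which changes $d_3$ by the self-linking number of the transverse push-off \cite{Geiges08}, hence changes its parity. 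Finally, the looseness of the wrong stabilization $S_\mp$, which you flag as an obstacle but do not prove, is handled in the paper by an explicit Farey-graph computation (the stabilization adds a basic slice whose sign inconsistently shortens against an adjacent basic slice, producing an overtwisted complement); some such argument is needed, not just the expectation that it ``creates torsion that can be pushed to an overtwisted disk.''
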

\begin{remark}
The contact structures $\xi$ and $\xi'$ in the theorem are determined by $M$ and $K$, but there does not seem to be an easy way of specifying them explicitly.
\end{remark}
\begin{remark}
This theorem was established for fibered knots by the second author in \cite{Etnyre13}.
\end{remark}

In \cite{Honda00a}, the second author and Honda showed that the classification of transverse knots is the same as the classification of Legendrian knots up to negative stabilization. From this, we have results similar to Theorem~\ref{allhaveinfinity} for the existence of non-loose transverse knots. 

\subsection{Rational unknots}
To prove Theorem~\ref{characterize} we need to deal with the case of rational unknots separately. To do so, we will classify non-loose rational unknots. In \cite{GeigesOnaran15}, Geiges and Onaran did this for rational unknots in $L(p,1)$ and $L(5,2)$. Here we will give the classification for all rational unknots in any lens space. We begin by restating the classification results of Geiges and Onaran and make it clear how the non-loose knots are related by stabilization. 

We first recall that $L(p,q)$ is $-p/q$-surgery on the unknot in $S^3$. We also recall the smooth classification of rational unknots. Given a lens space $L(p,q)$ one can consider the cores $K_0$ and $K_1$ of the Heegaard tori. It is well-known, see \cite{BakerEtnyre12}, that rational unknots up to isotopy are given by 
\[
\{\text{rational unknots in } L(p,q)\}=\begin{cases}
\{K_0\} & p=2\\
\{K_0,-K_0\}& p\not=2, \, q\equiv \pm1 \modp p\\
\{K_0,-K_0,K_1,-K_1\} & \text{otherwise.}
\end{cases}
\]
See Figure~\ref{fig:unknotsSmooth} for surgery descriptions of $K_0$ and $K_1$. It is clear that $K_1$ represents the core of one of the Heegaard tori for $L(p,q)$. We note that $L(p,\overline{q})$ is diffeomorphic to $L(p,q)$ via a diffeomorphism that exchanges the Heegaard tori, and so $K_0$ is clearly the other rational unknot in $L(p,q)$.

\begin{figure}[htbp]
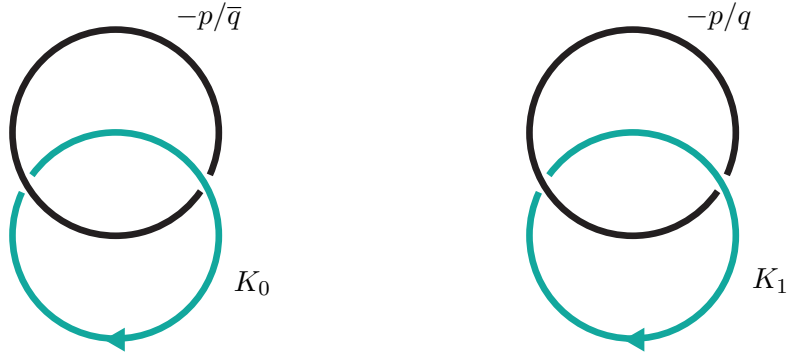

  \vspace{0.1cm}
  \begin{overpic}
  {figures/unknotsSmooth}
    \put(63, 124){$-p / \overline{q}$}
    \put(85, 23){$K_0$}
    \put(256, 124){$-p/q$}
    \put(281, 24){$K_1$}
  \end{overpic}
  \caption{The rational unknots in $L(p,q)$. Here, $\overline{q}$ is the positive integer satisfying $1 \leq \overline{q} \leq p$ and $q\overline{q} \equiv 1 \pmod p$.}
  \label{fig:unknotsSmooth}
\end{figure}

We begin by recalling a convenient means to describe the classification of Legendrian knots. Given a knot type ${K}$ in an overtwisted contact manifold $(M,\xi)$, we denote by $\mathcal{L}({K})$ the coarse equivalence classes of non-loose Legendrian realizations of $K$ in $(M,\xi)$. Consider the map $\Phi:\mathcal{L}(K)\to \Z^2$ that send a Legendrian $L$ to $\Phi(L)=(\rot(L), \tb(L))$. The image of $\Phi$, together with notation about the number of elements sent to a single point, is called the \dfn{mountain range} of $K$. If $K$ is only rationally null-homologous then we have a rational Thurston-Bennequin invariant $\tb_\Q(L)$  and a rational rotation number $\rot_\Q(L)$, see Section~\ref{rational}, and we can consider the similar map $\Phi:\mathcal{L}(K)\to \Q^2$ sending $L$ to $\Phi(L)=(\rot_\Q(L),\tb_\Q(L))$.

We will say a mountain range for a knot type \dfn{contains a $\normalfont\textsf{V}$ based at $(a,b)$} if the image of $\Phi$ contains Legendrian knots $L^0, L^i_\pm$ for $i\in \N$ such that
\begin{align*}
  \tb(L_\pm^i)=b+i &\text{ and } \rot(L_\pm^i)=a\pm(i-1),\\
  \tb(L^0) = b &\text{ and } \rot(L^0) = a,
\end{align*}
and satisfy
\begin{align*}
  &S_\pm(L_\pm^{i+1})=L_\pm^i \text{ and } S_\pm(L_\pm^1)=L^0,\\ 
  &S_\mp(L_\pm^i) \text{ and } S_\pm(L^0) \text{ are loose.}
\end{align*}
See the first drawing of Figure~\ref{fig:mrforru}.  
\begin{figure}[htbp]
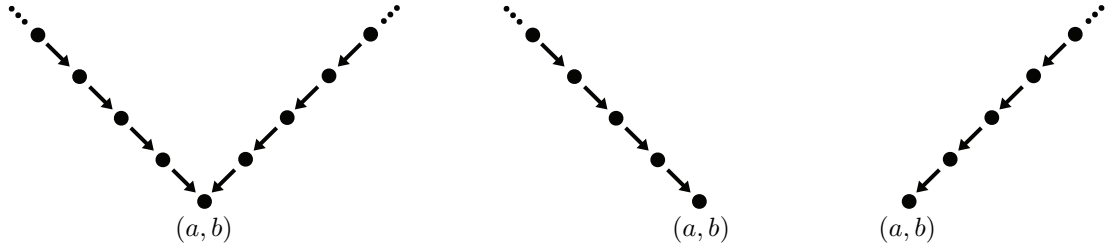
\small
  \vspace{0.1cm}
  \begin{overpic}
  {figures/MountainForRU}
    \put(63, -4){$(a,b)$}
    \put(251, -4){$(a,b)$}
    \put(329, -4){$(a,b)$}
  \end{overpic}
  \caption{On the left is the mountain range described by a $\normalfont\textsf{V}$ based at $(a,b)$. In the middle is a back slash based at $(a,b)$ and on the right is a forward slash based at $(a,b)$. }
  \label{fig:mrforru}
\end{figure}
We say the mountain range for a knot type \dfn{contains a back slash based at $(a,b)$} if the image of $\Phi$ contains Legendrian knots $L^i$ for $i\geq 0$ and integer such that
\[
\tb(L^i)= b+i \text{ and } \rot(L_i)=a-i
\]
and satisfy 
\begin{align*}
  &S_+(L^{i+1})=L^i,\\ 
  &S_-(L^i) \text{ and } S_+(L^0) \text{ are loose.}
\end{align*}
See the middle drawing of Figure~\ref{fig:mrforru}.

Similarly we say the mountain range for a knot type \dfn{contains a forward slash based at $(a,b)$} if the image of $\Phi$ contains Legendrian knots $L^i$ for $i\geq 0$ and integer such that
\[
\tb(L^i)= b+i \text{ and } \rot(L_i)=a+i
\]
and satisfy 
\begin{align*}
  &S_-(L^{i+1})=L^i,\\ 
  &S_+(L^i) \text{ and } S_-(L^0) \text{ are loose.}
\end{align*}
See the last drawing of Figure~\ref{fig:mrforru}.

We now can restate the result of Geiges and Onaran, together with how Legendrian knots are related via stabilization and which non-loose knots are in the same contact structure (in \cite{GeigesOnaran15}, they showed when the non-loose representatives had the same $d_3$ invariant, but did not compute the (half) Euler class).  Before stating the theorem, recall there are just two rational unknots in $L(p,1)$ and they are $K_0$ and $-K_0$. The theorem is stated for $K_0$ but applies to $-K_0$ by reversing the sign of the rotation numbers. 
\begin{theorem}\label{Lp1}
There are exactly $p$ distinct overtwisted contact structures on $L(p,1)$ that admit non-loose Legendrian representatives of $K_0$. Each contains a $\normalfont\textsf{V}$, one is based at $(0,1/p)$ and the others are based at $\left(1-\frac{2k}p, \frac{p+1}p\right)$, for $k\in \{1,\ldots, p-1\}$. 

The Euler class of the contact structure containing the $\normalfont\textsf{V}$ based at $\left(1-\frac{2k}p, \frac{p+1}p\right)$ is $2k-p$ and $0$ for the $\normalfont\textsf{V}$ based at $(0,1/p)$. 
\end{theorem}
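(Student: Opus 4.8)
\emph{Proof plan.}
The plan is to study a non-loose Legendrian representative $L$ of $K_0$ through its complement. After a contact isotopy we may take $L$ to be the Legendrian core of a Heegaard solid torus of $L(p,1)$ with a standard tight neighbourhood $N(L)$, so that $V := L(p,1)\setminus\interior N(L)$ is a solid torus with convex boundary, and non-looseness of $L$ means precisely that $\xi|_V$ is tight. The dividing set on $\partial V$ is two parallel curves whose slope, measured against the rational Seifert framing, is $\tb_\Q(L)$ (Section~\ref{rational}), and a $\pm$-stabilisation of $L$ is realised by attaching a bypass of the corresponding sign to $\partial V$. Hence the non-loose representatives of $K_0$, their classical invariants, and their stabilisations are all governed by the tight contact structures on the solid torus $V$ for the various boundary slopes, which for each slope form a finite set by Honda's classification (see also \cite{GeigesOnaran15}). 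Geiges and Onaran ran exactly this analysis in \cite{GeigesOnaran15} and obtained the list of non-loose representatives of $K_0$ in $L(p,1)$ with their rational $\tb$ and $\rot$, so I would start by recalling their conclusion. What is left is to (i) organise these representatives into a mountain range and verify the $\mathsf{V}$-structure of Figure~\ref{fig:mrforru}, (ii) count the overtwisted contact structures that occur and locate the base points, and (iii) compute the Euler classes.

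For (i)--(ii): within a fixed overtwisted $\xi$ on $L(p,1)$ there is a unique non-loose representative $L^0$ of $K_0$ of minimal $\tb_\Q$, sitting at the bottom of the mountain range at $(\rot_\Q(L^0),\tb_\Q(L^0)) = (a,b)$; its complement carries the ``simplest'' tight structure among those that reglue to this $\xi$. Allowing the dividing slope on $\partial V$ to move toward the meridian of $N(L)$ keeps $V$ tight while raising $\tb_\Q$, and Honda's bypass and shuffling lemmas show that from $L^0$ there are exactly two stabilisations that remain non-loose, producing the two arms $\{L_\pm^i\}$, whereas stabilising in the opposite sense from an arm point, or stabilising $L^0$ at all, pushes the slope out of the tight range --- an overtwisted disc appears in $V$ --- and so yields a loose knot. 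This is exactly the $\mathsf{V}$ of Figure~\ref{fig:mrforru}, and in particular each overtwisted $\xi$ in the list contributes one $\mathsf{V}$. Enumerating the possible complements, compatibly with the framing that reglues $N(L)$ to $L(p,1)$, gives exactly $p$ such $\xi$ (the gluings producing a \emph{tight} $L(p,1)$ --- there are $p-1$ of those --- must be discarded, since non-looseness is only defined for an overtwisted ambient manifold), and evaluating $\tb_\Q(L^0)$ and $\rot_\Q(L^0)$ from the rational classical-invariant formulas of Section~\ref{rational} puts the base points at $(0,1/p)$ and $\bigl(1-\tfrac{2k}{p},\tfrac{p+1}{p}\bigr)$, $k = 1,\dots,p-1$.

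For (iii), write $\xi = \xi|_{N(L)}\cup_T\xi|_V$ with $T = \partial N(L)$. The class $e(\xi)\in\Ho^2(L(p,1);\Z)\cong\Z/p$ is assembled from the relative Euler classes of the two pieces, taken with respect to a common section of $\xi$ over $T$ adapted to the dividing set, together with the twisting contributed by the difference of the two boundary trivialisations, which is read off from the dividing slope. The relative Euler class of the standard neighbourhood is $0$, and that of the tight solid torus $V$ is given by Honda's formula in terms of the same continued-fraction data that fixed the base point; carrying out this bookkeeping --- and fixing orientation and normalisation conventions so that the answer comes out as the stated integer representative --- gives $e(\xi) = 2k - p$ for the $\mathsf{V}$ based at $\bigl(1-\tfrac{2k}{p},\tfrac{p+1}{p}\bigr)$ and $e(\xi) = 0$ for the one based at $(0,1/p)$. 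Combined with the $d_3$-invariants of \cite{GeigesOnaran15}, this shows the $p$ contact structures are pairwise non-isotopic, which with the upper bound from (ii) gives exactly $p$. The main obstacle is this Euler-class computation: one must track orientations carefully and control the identification of $\Ho^2(L(p,1);\Z)$ when passing from Honda's relative invariants on the two solid-torus pieces to the absolute invariant of $\xi$, so that the integers appear as $2k-p$ and not merely their reductions mod $p$ (or their negatives); a close second is the bypass bookkeeping of (i), which must pin down exactly when stabilisation destroys tightness of the complement.
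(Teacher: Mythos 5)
Your overall strategy is the one the paper uses: split $L(p,1)$ along the Heegaard torus into a standard neighborhood of the Legendrian representative and a complementary solid torus, note that non-looseness is tightness of the complement, classify the complements slope by slope via Giroux--Honda (signed paths in the Farey graph), read off stabilizations from bypass attachments and consistent/inconsistent shortenings, and get $\rot_\Q$ and the Euler class from the relative Euler class of the complement. So the route is not different; the issue is that two of the steps, as you state them, are not correct and they are exactly where the content of the theorem lies.

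First, the stabilization picture is garbled, and with it the mechanism that produces the $\normalfont\textsf{V}$s. You write that ``from $L^0$ there are exactly two stabilisations that remain non-loose, producing the two arms,'' and in the same sentence that stabilising $L^0$ at all gives a loose knot; in the correct $\normalfont\textsf{V}$ both stabilizations of the base are loose, and the arms consist of representatives of \emph{larger} $\tb_\Q$ whose appropriate stabilizations descend to the base. More seriously, your plan treats each overtwisted $\xi$ as coming with a unique minimal representative and a single chain obtained by ``raising $\tb_\Q$,'' but at each admissible boundary slope there are \emph{several} tight structures on the complement: one at slope $\infty$ (giving $\tb_\Q=1/p$), $p+1$ at the next slope $-p-1$ (giving $\tb_\Q=(p+1)/p$), and $2p$ at every higher slope. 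The heart of the proof is the bookkeeping showing that of the $p+1$ representatives at the second level exactly two ($L_0$ and $L_p$, the ones with all signs equal) stabilize down to the slope-$\infty$ knot while the other $p-1$ become loose under every stabilization and hence are bases of their own $\normalfont\textsf{V}$s, and that every representative at a higher slope descends into one of these $p$ $\normalfont\textsf{V}$s. ``One $\normalfont\textsf{V}$ per $\xi$ with a unique minimal representative'' is a conclusion of this analysis, not a hypothesis you may start from.

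Second, the count of ``exactly $p$'' cannot be obtained the way you propose (``enumerate the gluings and discard the $p-1$ tight ones''). Discarding the tight side is only the statement that admissible dividing slopes lie on one side of the meridian; it does not produce the number $p$, since the enumeration of tight complements yields $1,\,p+1,\,2p,\,2p,\dots$ non-loose knots at the successive slopes and many of these share an ambient contact structure. The upper bound $p$ comes from the stabilization analysis above (the $p+1$ second-level knots contribute only $p-1$ new contact structures, the remaining two joining the slope-$\infty$ one), and the lower bound requires distinguishing the resulting structures: the Euler classes $0$ and $\pm(p-2k)$ suffice only for $p$ odd, and for $p$ even two of them coincide and one must invoke finer invariants (half-Euler classes or the $d_3$-computations of Geiges--Onaran), which you do mention. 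With the stabilization bookkeeping done correctly the rest of your outline (the invariant formulas $\tb_\Q=1/p,\ (p+1)/p$, $\rot_\Q=1-2k/p$, and the Euler class via the relative Euler class of the complementary solid torus evaluated on the meridian disk) matches the paper's computation.
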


\begin{remark}
When $p$ is odd it is easy to check that the Euler classes of the contact structures on $L(p,1)$ that admit non-loose representatives are all distinct. When $p$ is even one can show that their half-Euler classes, see \cite{Gompf98}, are all distinct, except for two that both have half-Euler class equal to $0$; but those have distinct $d_3$-invariants as computed in \cite{GeigesOnaran15}. We do not discuss the half-Euler classes here, so only make this remark for the readers benefit and to note that there are always at least two contact structures that admit non-loose representatives. 
\end{remark}

We also have the following simple case. Again, the theorem is stated for $K_0$ but applies to $-K_0$ by reversing the sign of the rotation numbers and exchanging the words ``back slash'' and ``forward slash''. 

\begin{theorem}\label{Lpp1}
  For $p\not=2$ there are exactly three overtwisted contact structures on $L(p,p-1)$ in which there are non-loose Legendrian representatives of $K_0$; in one there is $\normalfont\textsf{V}$ based at $\left(0, \frac{2p-1}p\right)$, in another there is a back slash based at $\left(-1+\frac 2p, \frac{p-1}p\right)$ and in the last one there is a forward slash based at $\left(1-\frac 2p, \frac{p-1}p\right)$. The Euler class of the two contact structures containing the slashes is $\pm (2-p)$ while the Euler class of the contact structure containing the $\normalfont\textsf{V}$ is $0$. 
\end{theorem}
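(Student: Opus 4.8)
The plan is to classify the non-loose Legendrian representatives $L$ of $K_0$ through the tight contact structure they induce on the complement of a standard neighborhood, in the manner of Theorem~\ref{Lp1}. Write $L(p,p-1)=V_0\cup_\phi V_1$ with $K_0$ the core of $V_0$. Given a non-loose $L$, make $\partial N(L)$ convex for a standard neighborhood $N(L)\subset V_0$; its two dividing curves have some slope $s$, and since $K_0$ is a Heegaard core the complement $C=L(p,p-1)\setminus N(L)$ is again a solid torus, smoothly isotopic to $V_1$, on which $\xi$ restricts to a tight contact structure (this is what non-loose means). The lens space gluing $\phi$ converts the contact framing recorded by $s$ into a dividing slope on $\partial C$ measured against the meridian of $V_1$ by an explicit change of coordinates; because $p-1\equiv-1\pmod p$ this map is simple, and it both computes $\tb_\Q(L)$ from $s$ and restricts $s$ to an allowable range. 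Conversely, any tight contact structure on the solid torus $C$ with convex boundary of an allowable slope, glued to a standard contact neighborhood of a Legendrian $K_0$ of matching boundary slope, yields a contact structure on $L(p,p-1)$ in which $K_0$ has a representative with tight complement; this representative is a genuine non-loose knot precisely when the resulting closed contact structure is overtwisted, so, since $L(p,p-1)$ carries a unique tight contact structure, the gluings to be discarded are exactly those that reproduce it.

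For the enumeration I would use Honda's classification of tight contact structures on solid tori. First, $C$ carries no Giroux torsion --- a $2\pi$-torsion layer bounding a solid torus is overtwisted, which would make $L$ loose --- so $\xi|_C$ is one of the finitely many minimally twisting tight structures with its boundary slope, organized by a continued fraction expansion into a chain of basic slices. Using the simple form of $\phi$, one shows that the maximal rational Thurston--Bennequin number of a non-loose representative is $\tb_\Q=\frac{2p-1}{p}$, attained only at $\rot_\Q=0$ and realized by a single tight structure on $C$ (the base $L^0$ of the $\normalfont\textsf{V}$), and that every other non-loose representative is a stabilization of one with $\tb_\Q=\frac{2p-1}p$ or $\tb_\Q=\frac{p-1}p$; at the second value the relevant tight structures on $C$ are exactly two, giving $\rot_\Q=\pm\!\left(1-\tfrac2p\right)$ (the bases of the two slashes), together with structures extending to the unique tight contact structure on $L(p,p-1)$, which are discarded. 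The remaining point here is to confirm that gluing a standard neighborhood to the $\normalfont\textsf{V}$-base structure and to the two slash-base structures really produces overtwisted contact manifolds; I would verify this by computing the $d_3$-invariant (or the homotopy class of the plane field) of the result, or by producing an overtwisted disk directly from the boundary data.

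Next I would pin down the stabilization relations and looseness. Enlarging $N(L)$ by a single basic slice of sign $\pm$ realizes $S_\pm(L)$, since then $N(S_\pm L)\subset N(L)$ and the complement grows by a $\pm$-basic slice; the enlarged solid torus is tight --- so $S_\pm L$ is again non-loose --- precisely when the appended slice is sign-compatible with the basic-slice decomposition of $C$ in Honda's gluing criterion, and is overtwisted (so $S_\pm L$ is loose) otherwise. Running this for the three families yields exactly the asserted shapes: at $L^0$ both signs are incompatible, so $S_+(L^0)$ and $S_-(L^0)$ are loose while $S_\pm(L^1_\pm)=L^0$, giving the two legs $\{L^i_\pm\}_{i\ge1}$ of the $\normalfont\textsf{V}$ with $\rot=0\pm(i-1)$; at each slash base only one sign is ever compatible, producing a single infinite leg whose rotation number changes by $-1$ per step (a back slash) in one contact structure and by $+1$ per step (a forward slash) in the other. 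Since no non-loose representative occurs at a higher slope and the basic-slice chains cannot be prolonged downward without making the complement overtwisted, these legs exhaust $\mathcal{L}(K_0)$ in each of the three contact structures.

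Finally I would identify the three contact structures and compute their Euler classes in $H^2(L(p,p-1);\Z)\cong\Z/p$ by evaluating $e(\xi)$ on the rational Seifert surface of $K_0$ via the relative Euler class and the rotation numbers already computed --- or, equivalently, by realizing each $\xi$ through contact surgery on a Legendrian link in $(S^3,\xi_{\mathrm{std}})$ or an overtwisted $S^3$ and applying the surgery formula for the Euler class. This gives $e=0$ for the structure containing the $\normalfont\textsf{V}$ and $e=\pm(2-p)$ for the two containing the slashes; these are distinct in $\Z/p$ when $p$ is odd, and when $p$ is even the two slash-carrying structures are separated instead by their $d_3$-invariants as in \cite{GeigesOnaran15}, so there are exactly three in all. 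The statement for $-K_0$ then follows by reversing all rotation numbers, which interchanges positive and negative stabilization and hence the two slashes. I expect the main obstacle to be precisely the two points flagged above: confirming that the relevant gluings land in the overtwisted regime rather than reproducing the unique tight contact structure on $L(p,p-1)$, and carrying out the Euler class computation, which is the genuinely new ingredient beyond \cite{GeigesOnaran15} (where only the $d_3$-invariant was recorded).
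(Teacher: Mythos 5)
Your overall framework is the same as the paper's (decompose $L(p,p-1)$ into a standard neighborhood $V_0$ of $K_0$ and a complementary solid torus, count tight structures on the complement via Honda's classification, read off stabilizations by attaching basic slices and checking consistent versus inconsistent shortenings of signed paths, and extract Euler classes from the relative Euler class), but the enumeration step as you state it is wrong. There is no maximal $\tb_\Q$ for non-loose representatives: the admissible dividing slopes on $\partial V_0$ (lower meridian $-p/(p-1)$) are exactly those in $[(-p/(p-1))^a, -p/(p-1))$, namely $s_k=-(p-1)/(p-2)\oplus k\bigl(-p/(p-1)\bigr)$ for $k\ge 0$, giving $\tb_\Q=\frac{p-1}{p}+k$, which is unbounded above with \emph{minimum} $\frac{p-1}{p}$. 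This is precisely what the upward-opening infinite legs in your own third paragraph encode (since $S_\pm(L^1_\pm)=L^0$ forces $\tb(L^1_\pm)=\tb(L^0)+1$), and it contradicts your claims that the maximal $\tb_\Q$ is $\frac{2p-1}{p}$ and that every other non-loose representative is a stabilization of one with $\tb_\Q=\frac{2p-1}{p}$ or $\frac{p-1}{p}$; in fact all stabilizations of the three base representatives are loose. Your counts are also off: at slope $s_1$ (so $\tb_\Q=\frac{2p-1}{p}$) the complement carries three tight structures (the $\normalfont\textsf{V}$-base with $\rot_\Q=0$ plus one knot on each slash), not one, at $s_0$ it carries exactly two, and at $s_k$ for $k>1$ four; nothing at these slopes is ``discarded.''

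The second genuine gap is the tight-versus-overtwisted question, which you flag but defer to a $d_3$ computation or an unspecified overtwisted disk. The mechanism your argument needs is the slope dichotomy: if the dividing slope of $\partial V_0$ lies in $(-p/(p-1),(-p/(p-1))^c]$, then gluing any tight structure on $V_1$ produces a tight $L(p,p-1)$, so these slopes never contribute non-loose knots in overtwisted structures; whereas for slopes in $[(-p/(p-1))^a,-p/(p-1))$ the solid torus $V_0$ contains convex tori of every dividing slope clockwise of $-p/(p-1)$ and anticlockwise of $s$, in particular slope $0$, and a Legendrian divide on the slope-$0$ torus bounds a meridian disk of $V_1$, so the glued structure is overtwisted for \emph{every} choice of tight structure on $V_1$. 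With this in hand no homotopy-theoretic check is needed and the correspondence between non-loose representatives with framing $s_k$ and $\Tight(S^0,s_k)$ is exact; without it, your enumeration and your claim that the three families exhaust $\mathcal{L}(K_0)$ are not justified (the uniqueness of the tight structure on $L(p,p-1)$, though true, is not the relevant point). The remaining ingredients of your plan, the basic-slice stabilization analysis and the Euler class computation via the relative Euler class of the complementary solid torus (with sign opposite to the rotation number because $-p/(p-1)$ is clockwise of $(-p/(p-1))^a$), do match the paper once the enumeration is corrected.
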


\begin{remark}
  Notice that the Euler class of the contact structures containing these three mountain ranges are distinct except for the case when $p=4$. In this case the two slashes have the same Euler class. Their half-Euler classes are distinct and one can compute that their $d_3$-invariants are distinct, but we do not do that here and just note that in all cases we have proven that there are at least two contact structures admitting non-loose rational unknots. 
\end{remark}

We state one more simple case where we must consider both $\pm K_0$ and $\pm K_1$, before discussing the general case. The theorem is stated for $K_0$ and $K_1$ but applies to $-K_0$ and $-K_1$ by reversing the sign of the rotation numbers and exchanging the words ``back slash'' and ``forward slash''.
\begin{theorem}\label{l2n12}
In the lens space $L(2n+1,2)$, for $n\geq 2$, there are $2n+1$ overtwisted contact structures in which $K_0$ has non-loose Legendrian representatives. In one there is a back slash based at $\left(-\frac n{2n+1}, \frac{n+1}{2n+1}\right)$, in another there is a forward slash based at $\left(\frac n{2n+1}, \frac{n+1}{2n+1}\right)$, and in the rest there is a $\normalfont\textsf{V}$ based at $\left(\frac{n-2k}{2n+1}, \frac{n+1}{2n+1}\right)$ for $k=1, \ldots, n-1$ and based at $\left( \frac{n-1-2k}{2n+1},\frac{3n+2}{2n+1}\right)$ for $k=0,\ldots, n-1$. The Euler class of the contact structures containing a slash is $n$ and $n+1$, and the Euler classes of contact structures containing a $\normalfont\textsf{V}$ are all distinct and distinct from $n$ and $n+1$. 

There are $n+2$ contact structures in which $K_1$ has non-loose Legendrian representatives. In one there is a back slash based at $\left(-\frac 1{2n+1}, \frac 2{2n+1}\right)$, in another there is a forward slash based at $\left(\frac 1{2n+1}, \frac 2{2n+1}\right)$, and in the rest there is a $\normalfont\textsf{V}$ based at $\left(\frac{2n-2k}{2n+1}, \frac{2n+3}{2n+1}\right)$ for $k=1,3,\ldots, 2n-1$. Euler classes of all these contact structures are different except for two of the $\normalfont\textsf{V}$s that both have Euler class $0$. 
\end{theorem}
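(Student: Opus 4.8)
The plan is to run the standard convex surface theory argument, following the same approach as for Theorems~\ref{Lp1} and~\ref{Lpp1}. Write $M = L(2n+1,2) = V_0 \cup_T V_1$ as a genus-one Heegaard splitting along a torus $T$, with $K_j$ the core of $V_j$. If $L$ is a Legendrian representative of $K_j$ in an overtwisted contact structure $\xi$ on $M$, let $N(L) \subset V_j$ be a standard convex neighborhood; then the complement $W_j = M \setminus \interior N(L)$ is again a solid torus (for $j=0$ it is $(V_0 \setminus \interior N(L)) \cup_T V_1$, a collar glued to a solid torus), and $L$ is non-loose exactly when $\xi|_{W_j}$ is tight. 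Conversely, any tight contact structure on a solid torus $W_j$ with convex boundary of a slope realizable as a contact framing, glued to the standard neighborhood of a Legendrian core, gives a Legendrian representative of $K_j$, and coarse equivalence classes of non-loose representatives correspond to such tight structures up to isotopy fixing the boundary slope. Since tight contact structures on a solid torus with fixed convex boundary are finite in number (Honda), $K_j$ has only finitely many non-loose representatives, in contrast to Theorem~\ref{allhaveinfinity}. So the first step is: fix coordinates on $\partial W_j$ using the meridian of $W_j$ and the framing induced by a meridian disk of $V_j$; determine the admissible boundary slopes; and for each, count the tight structures on $W_j$ via Honda's classification. The continued fraction expansions associated to $-(2n+1)/2$ and to its conjugate are what will produce the totals $2n+1$ and $n+2$.

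Next, for each tight $\xi_{W_j}$ with a given boundary slope, reconstruct $(M,\xi)$ by gluing back the standard neighborhood $N(L)$ and determine (a) whether $\xi$ is overtwisted and (b) its homotopy invariants. Because the tight contact structures on $L(2n+1,2)$ are classified and finite in number, $\xi$ is overtwisted unless its $d_3$-invariant, computed from $\xi_{W_j}$ through the gluing, equals one of the finitely many tight values; this identifies exactly which slopes and which tight structures on $W_j$ occur. The Euler class $e(\xi) \in H^2(M;\Z) \cong \Z/(2n+1)$ is obtained from the relative Euler class of $\xi_{W_j}$ together with the contribution of $N(L)$; the same relative Euler class computes the rational rotation number $\rot_\Q(L)$, while $\tb_\Q(L)$ is read directly off the boundary slope. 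Running this for $j=0$ over all admissible slopes yields the base points $\left(\frac{n-2k}{2n+1}, \frac{n+1}{2n+1}\right)$ and $\left(\frac{n-1-2k}{2n+1}, \frac{3n+2}{2n+1}\right)$ together with the slash base points and the stated Euler classes, and similarly for $j=1$.

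Then organize the representatives by stabilization. A positive (resp.\ negative) stabilization of $L$ corresponds to attaching a basic slice of definite sign to $W_j$ along $\partial W_j$, i.e.\ shrinking $N(L)$ across a bypass; this moves the boundary slope by one Farey step and shifts $(\rot_\Q(L), \tb_\Q(L))$ in the expected way. Whether such an attachment keeps $\xi_{W_j}$ tight, equivalently whether the stabilized knot stays non-loose, is dictated by the sign pattern of the continued fraction expansion of the current slope: at the extreme slopes one direction of stabilization makes $W_j$, hence $M$, overtwisted, yielding a back slash or a forward slash, whereas at the interior slopes both stabilizations remain non-loose until a maximal-$\tb$ representative is reached, yielding a $\textsf{V}$. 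This recovers the asserted mountain ranges and the relations $S_\pm(L^i_\pm) = L^{i-1}_\pm$ and the looseness of the opposite stabilizations.

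The main obstacle will be the bookkeeping in the last two steps: pushing orientations and signs correctly through the gluing so that the Euler classes, and hence the $\rot_\Q$-coordinates of the base points, come out right, and pinning down precisely which basic-slice attachments preserve tightness of the complementary solid torus, since this is what separates the three mountain-range shapes and requires a careful case analysis of the continued fraction expansions attached to the two Heegaard solid tori of $L(2n+1,2)$. Verifying the distinctness of the Euler classes, and in the exceptional coincidences (the two $\textsf{V}$'s for $K_1$ with Euler class $0$) that the $d_3$-invariants differ as in \cite{GeigesOnaran15}, is then a finite but somewhat delicate computation.
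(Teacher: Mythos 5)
Your overall framework (Heegaard splitting $L(2n+1,2)=V_0\cup_T V_1$, standard neighborhood of a Legendrian core, non-looseness equivalent to tightness of the complementary solid torus, counting via Honda's classification, and organizing by basic-slice attachments with consistent/inconsistent shortening) is the paper's framework. However, there is a genuine gap at the step where you decide which boundary slopes actually occur and why the resulting ambient contact structures are overtwisted. You propose to detect overtwistedness of the glued-up structure by computing its $d_3$-invariant and declaring it overtwisted unless $d_3$ matches one of the finitely many tight values, and you claim this ``identifies exactly which slopes and which tight structures on $W_j$ occur.'' That cannot work as stated: homotopy invariants (and you would need the Euler class/spin$^c$ data as well as $d_3$) certify overtwistedness only when they differ from every tight value; when they agree you can conclude nothing, since an overtwisted structure may be homotopic to a tight one, and such a structure would still have to be counted if overtwisted or discarded if tight. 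So your criterion can neither confirm the admissible slopes nor rule out the others, and the totals $2n+1$ and $n+2$ are not justified by it.

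The paper settles this by a direct convex-surface argument that makes overtwistedness independent of which tight structure sits on the complement. The admissible dividing slopes $s$ for $\partial V_0$ are exactly those with an edge to the meridian $-p/q$ lying in $[(-p/q)^a,-p/q)$: for such $s$ the tight solid torus $V_0$ with lower meridian $-p/q$ and boundary slope $s$ contains convex tori of every slope clockwise of $-p/q$ and anticlockwise of $s$, in particular slope $0$, and a Legendrian divide on that torus bounds a meridian disk of $V_1$, giving an explicit overtwisted disk; hence every gluing with these slopes is overtwisted, and the non-loose representatives with that framing are in bijection with $\Tight(S^0,s)$. For slopes in $(-p/q,(-p/q)^c]$ every gluing produces one of the classified tight structures on the lens space, so these are excluded outright. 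You need this (or an equivalent) argument in place of the $d_3$ comparison; with it, the rest of your outline --- counting signed Farey paths for the complement (with the special slopes $s_0=-n-1$, $s_k$ for $K_0$ and $\infty$, $1/k$ for $K_1$), reading $\tb_\Q$ from the slope and $\rot_\Q$ and the Euler class from the relative Euler class, and deducing the \textsf{V}/slash structure from which stabilizations shorten consistently --- proceeds exactly as in the paper.
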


We now give the general classification result. We will denote the negative continued fraction 
\[
a_0-\frac{1}{a_1-\frac{1}{\ldots - \frac{1}{a_n}}}
\]
for $a_i \leq -2$ by $[a_0, a_1, \ldots, a_n]$. 
\begin{theorem}\label{lensgeneral}
Suppose (p,q) is a pair of relatively prime positive integers satisfying $1 < q < p$. Let $-p/q=[a_0,a_1, \ldots, a_n]$ and note that $n\geq 1$ since $q\not= 1$. In $L(p,q)$, the non-loose Legendrian representatives of the rational unknot $K_0$ (and $-K_0$) give 
\[
\begin{cases}
  \left|(a_0+1)\cdots (a_{n-2}+1)(a_{n-1}+2)\right| & n > 1\\
  |a_0 + 2| & n =1
\end{cases}
\]
$\normalfont\textsf{V}$s based at $(r,\overline{q}/p)$, where $1 \leq \overline{q} \leq p$ and $q\overline{q} \equiv 1 \pmod p$, and 
\[
  \begin{cases}
  |(a_0+1)\cdots (a_{n-2}+1)| & n > 1\\
  1 & n = 1
  \end{cases}
\]
 back slashes based at $(r,\overline{q}/p)$ and the same number of forward slashes based at $(r, \overline{q}/p)$ and a formula for $r$ will be given in Section~\ref{sec:ruk}. In addition there are 
 \[
  |(a_0+1)\cdots (a_{n}+1)|
 \] 
 $\normalfont\textsf{V}$s based at $(r,(\overline{q}+p)/p)$. A formula for $r$ and the Euler class of the contact structure containing each of theses mountain ranges will also be given Section~\ref{sec:ruk}, but we note here that the mountain ranges are in at least two distinct contact structures.

  The non-loose Legendrian representatives of the rational unknot $K_1$ (and $-K_1$) give 
  \[
  \left|(a_{1}+2)(a_{2}+1)\cdots (a_n+1)\right| 
\]
$\normalfont\textsf{V}$s based at $(r,q/p)$, and 
\[
  \begin{cases}
  |(a_2+1)\cdots (a_n+1)| & n > 1\\
  1 & n = 1
  \end{cases}
\]
 back slashes based at $(r,q/p)$ and the same number of forward slashes based at $(r, q/p)$ and a formula for $r$ will be given in Section~\ref{sec:ruk}. In addition there are 
 \[
  |(a_0+1)\cdots (a_{n}+1)|
 \] 
 $\normalfont\textsf{V}$s based at $(r,(q+p)/p)$. A formula for $r$ and the Euler class of the contact structure containing each of theses mountain ranges will also be given Section~\ref{sec:ruk}, but we note here that the mountain ranges are in at least two distinct contact structures. 
\end{theorem}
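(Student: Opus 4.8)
The plan is to translate the statement into Honda's classification of tight contact structures on solid tori via convex surface theory. Fix $K$ to be one of $K_0$ or $K_1$ and let $L$ be a Legendrian representative. Since $K$ is the core of one of the Heegaard solid tori in $L(p,q)=V_0\cup V_1$, a standard contact neighborhood $N_L$ has complement $S=L(p,q)\setminus \interior N_L$ which is again a solid torus, isotopic to the complementary Heegaard torus. The representative $L$ is non-loose precisely when $\xi|_S$ is tight while $\xi$ is overtwisted on $L(p,q)$. We may take $\partial S=\partial N_L$ convex with two dividing curves, whose slope $s$ (in coordinates coming from the solid-torus structure of $S$) is an affine function of the rational contact framing, hence of $\tb_\Q(L)$. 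Thus, for each admissible value of $\tb_\Q$, classifying the non-loose representatives of $K$ with that invariant reduces to classifying the tight contact structures on the solid torus $S$ whose convex boundary has two dividing curves of slope $s$, and then throwing away those whose gluing with $N_L$ produces a tight (rather than overtwisted) manifold.

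I would then invoke Honda's count: for a solid torus with convex boundary, two dividing curves, and boundary slope with negative continued-fraction expansion $[b_0,\ldots,b_m]$ in coordinates where the meridian has the standard slope, the number of tight contact structures is, up to sign, $(b_0+1)\cdots(b_{m-1}+1)(b_m+1)$, with the last factor replaced by $(b_m+2)$ in the variant in which an end is capped off by a meridian disk. To apply this one must pin down the coordinates on $S$: the meridian of the complementary Heegaard torus has slope $-p/\overline{q}$ when $K=K_0$ and $-p/q$ when $K=K_1$, and the negative continued fractions of $-p/q$ and $-p/\overline{q}$ are related by the standard duality (the Riemenschneider point rule). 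This is exactly what makes the $K_0$ and $K_1$ tallies involve complementary truncations of $[a_0,\ldots,a_n]$. Running this bookkeeping over the finitely many slopes $s$ that actually occur, and subtracting the tight structures on $S$ that extend over one of the (finitely many, also Honda-classified) tight contact structures on $L(p,q)$, yields the stated products; the two ``extremal'' slopes $\overline{q}/p$ and $(\overline{q}+p)/p$ give the two families of $\normalfont\textsf{V}$s for $K_0$ (and $q/p$, $(q+p)/p$ for $K_1$), and the slashes arise at the first of these.

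The mountain-range shape is then recovered from the bypass calculus. Positive and negative stabilization of $L$ correspond to attaching bypasses of the two signs to $\partial N_L$ from the $S$-side, which on the Farey graph moves the boundary slope of $S$ one step in one of two directions; whether the resulting solid torus is still tight (another non-loose representative) or becomes overtwisted (a loose one) is read off from Honda's gluing theorem. The mountain range is a $\normalfont\textsf{V}$ based at the vertex slope when the tight structure on $S$ there admits bypasses of both signs that can be peeled off while keeping tightness, and a back or forward slash when only one sign survives; reversing the orientation of $K$ interchanges the two signs, which gives the stated symmetry for $-K_0$ and $-K_1$. The rotation numbers $r$ of the vertices, and the Euler class of the ambient overtwisted structure, follow from the relative Euler class of $\xi|_S$ supplied by Honda's classification, glued with the contribution of $N_L$; these explicit computations are what Section~\ref{sec:ruk} records. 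That each mountain range sits in one of at least two distinct overtwisted structures follows by exhibiting two distinct Euler classes among them (and, in the few cases where the products could force a coincidence, separating the structures by half-Euler class or $d_3$, as in the special cases).

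The main obstacle is this last matching step: determining, from Honda's tight-solid-torus classification together with (non)extendability over $L(p,q)$, exactly which stabilizations remain non-loose and which become loose --- that is, proving the $\normalfont\textsf{V}$-versus-slash dichotomy and that the ``other'' stabilizations genuinely have overtwisted complements. This is also where the endpoint corrections ($+2$ versus $+1$) in the continued-fraction products are forced. A secondary difficulty is making the argument uniform in $n$, treating the base case $n=1$ by hand, and checking consistency with Theorems~\ref{Lp1} (the $q=1$ case, outside the present hypotheses and handled separately), \ref{Lpp1} (where $-p/q=[-2,\ldots,-2]$), and \ref{l2n12} (where $-p/q=[-(n+1),-2]$), which provide three independent checks on the formulas.
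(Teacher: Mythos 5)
Your overall route is the same as the paper's: split $L(p,q)$ into the standard neighborhood of the core and the complementary Heegaard solid torus, classify the tight structures on the complement by signed Farey paths (Giroux--Honda), recover the mountain-range shape from how stabilization adds a basic slice, compute $\rot_\Q$ and the Euler class from relative Euler classes, and separate the ambient overtwisted structures by Euler class (with half-Euler class or $d_3$ in the degenerate cases). The problem is that the actual content of the theorem --- the exact products, including the $+2$ versus $+1$ endpoint corrections, and the \textsf{V}-versus-slash dichotomy --- lives entirely in the step you defer as ``the main obstacle'' and never execute: for each admissible dividing slope $s_k=(-p/q)^a\oplus k(-p/q)$ and each tight structure on the complement, one must decide which stabilization keeps the complement tight and which makes it overtwisted. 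The paper does this concretely: stabilizing appends a basic slice of the given sign to the path describing the complement, and tightness is decided by whether the extended path can be consistently shortened after shuffling within continued fraction blocks; running this in the three regimes $k=0$, $k=1$, $k>1$ gives the counts $|a_{n-1}|A$, $|a_n-1|\,|a_{n-1}+1|A$, $2|a_n|\,|a_{n-1}+1|A$ with $A=|(a_0+1)\cdots(a_{n-2}+1)|$, and identifies exactly which extremal sign patterns are bases of slashes versus \textsf{V}s. Without this analysis the stated formulas are not established.

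Several of the ingredients you do cite are also misstated in ways that would derail the bookkeeping if used as written. Honda's count for a solid torus with two dividing curves and boundary slope $[b_0,\ldots,b_m]$ is $|(b_0+1)\cdots(b_{m-1}+1)\,b_m|$ (the edge adjacent to the meridian carries no sign), not $(b_0+1)\cdots(b_m+1)$, and there is no ``$b_m+2$'' variant. The continued fractions of $-p/q$ and $-p/\overline{q}$ (with $q\overline{q}\equiv 1 \pmod p$) are related by reversing the string $[a_0,\ldots,a_n]$; the Riemenschneider point rule instead relates $p/q$ to $p/(p-q)$, so invoking it here would produce the wrong truncations for $K_1$. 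There are infinitely many admissible slopes $s_k$, not finitely many --- this is precisely why each mountain range is infinite --- and the counts in the theorem are not Honda counts at a single slope but tallies of bases extracted from the stabilization analysis. Finally, your ``subtract the structures that glue up tight'' step needs the paper's dichotomy to be usable: when the dividing slope lies in $[(-p/q)^a,-p/q)$ the knot's neighborhood already contains a convex torus whose dividing slope is the meridian of the complementary torus, so every gluing is overtwisted and nothing is subtracted, whereas slopes in $(-p/q,(-p/q)^c]$ always glue up tight and are discarded wholesale; without articulating this, the subtraction scheme is not well defined.
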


See Firgure~\ref{fig:unknotsLegendrian1} and~\ref{fig:unknotsLegendrian2} for the diagrams of non-loose Legendrian representatives of rational unknots. We will show that this are indeed the desired diagrams in Section~\ref{sdru}.

\begin{figure}[htbp]
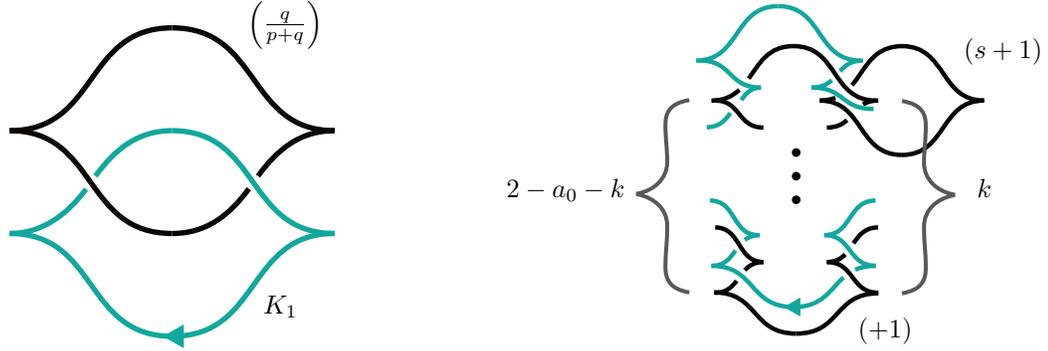
\small
  \vspace{0.1cm}
  \begin{overpic}
  {figures/unknotsLegendrian1}
    \put(90, 120){$\left(\frac{q}{p+q}\right)$}
    \put(96, 13){$K_1$}
    \put(188, 57){$2-a_0-k$}
    \put(366, 57){$k$}
    \put(321, 4){$(+1)$}
    \put(361, 110){$(s+1)$}
  \end{overpic}
  \caption{Left: non-loose Legendrian representatives of $K_1$ with $\tb_{\Q} = q/p$. Right: non-loose Legendrian representatives of $K_1$ with $\tb_{\Q} = (p+q)/p$. Here, $1 \leq k \leq 1-a_0$ and $s = [a_1,\ldots,a_n]$ where $-p/q = [a_0,\ldots,a_n]$. For $K_0$, replace $q$ with $\overline{q}$ where $1 \leq \overline{q} \leq p$ and $q\overline{q} \equiv 1 \pmod p$.}
  \label{fig:unknotsLegendrian1}
\end{figure}

\begin{figure}[htbp]
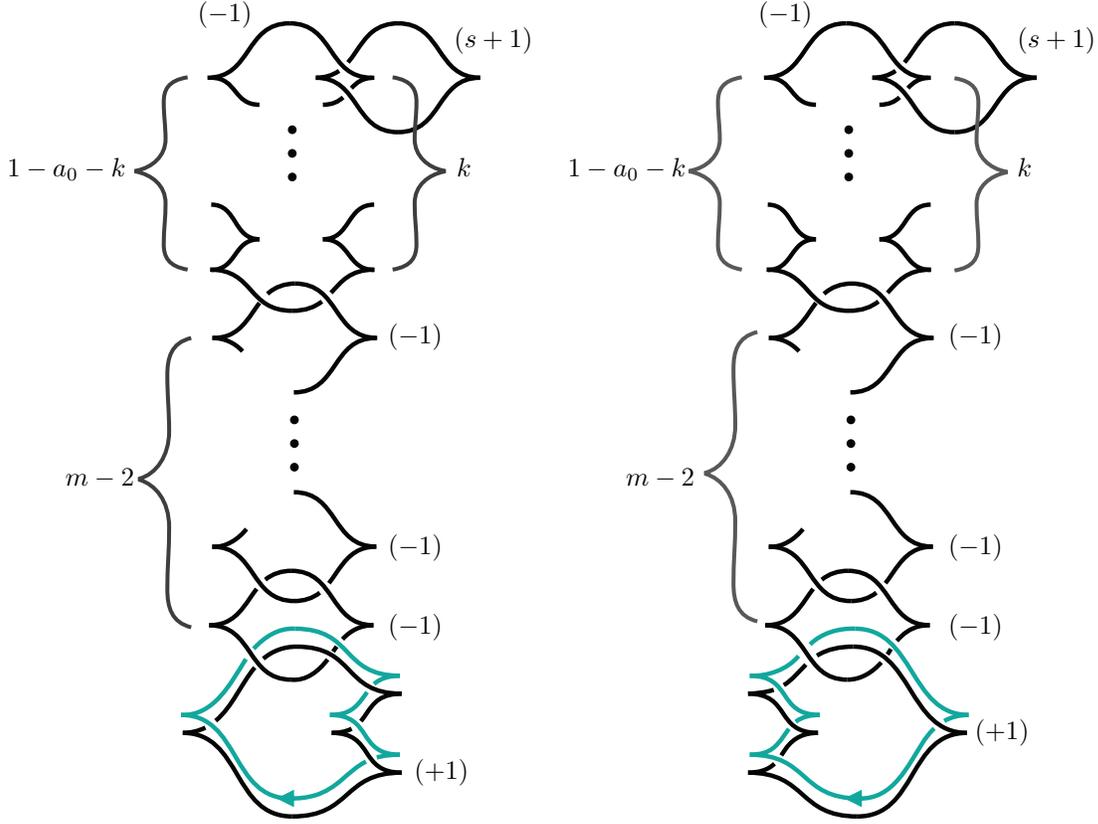
\small
  \vspace{0.1cm}
  \begin{overpic}
  {figures/unknotsLegendrian2}
    \put(-48, 243){$1-a_0-k$}
    \put(122, 243){$k$}
    \put(24, 302){$(-1)$}
    \put(121, 292){$(s+1)$}
    \put(-26, 126){$m-2$}
    \put(96, 180){$(-1)$}
    \put(96, 100){$(-1)$}
    \put(96, 70){$(-1)$}
    \put(106, 15){$(+1)$}

    \put(164, 243){$1-a_0-k$}
    \put(334, 243){$k$}
    \put(236, 302){$(-1)$}
    \put(334, 292){$(s+1)$}
    \put(186, 126){$m-2$}
    \put(308, 180){$(-1)$}
    \put(308, 100){$(-1)$}
    \put(308, 70){$(-1)$}
    \put(318, 30){$(+1)$}
  \end{overpic}
  \caption{Non-loose Legendrian representatives of $K_1$ with $\tb_{\Q} = q/p + m$ for $m \geq 2$. Here, $1 \leq k \leq -a_0$ and $s = [a_1,\ldots,a_n]$ where $-p/q = [a_0,\ldots,a_n]$. For $K_0$, replace $q$ with $\overline{q}$ where $1 \leq \overline{q} \leq p$ and $q\overline{q} \equiv 1 \pmod p$.}
  \label{fig:unknotsLegendrian2}
\end{figure}

From the above results we have the following immediate corollary that we need for the proof of Theorem~\ref{characterize}. 
\begin{corollary}\label{courseRU}
Every rational unknot in a lens space, except for the unknot in $S^3$, admits non-loose Legendrian representatives in at least two contact structures. Also, every rational unknot does not admit non-loose transverse representatives. 
\end{corollary}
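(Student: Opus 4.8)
The two claims follow almost immediately from the classification theorems above, so the work is just assembling them. For the first claim, recall that the rational unknots in a lens space $L(p,q)$ are $K_0$ (always), $-K_0$ (when $p \neq 2$), and possibly $K_1, -K_1$ (when $q \not\equiv \pm 1 \pmod p$); the unknot in $S^3$ is the case $p = 1$. The plan is to split into the cases covered by Theorems \ref{Lp1}, \ref{Lpp1}, \ref{l2n12}, and \ref{lensgeneral} and read off that in each case there are at least two overtwisted contact structures carrying a mountain range (a $\normalfont\textsf{V}$, a back slash, or a forward slash) for the given rational unknot. In $L(2,q) = L(2,1)$ the only rational unknot is $K_0$, and Theorem \ref{Lp1} with $p = 2$ gives two contact structures (the $\normalfont\textsf{V}$ based at $(0,1/2)$ and one based at $(0, 3/2)$). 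For $L(p,1)$ with $p > 2$, Theorem \ref{Lp1} gives $p \geq 3$ contact structures, and the symmetric statement handles $-K_0$. For $L(p,p-1)$, Theorem \ref{Lpp1} gives three; for $L(2n+1,2)$, Theorem \ref{l2n12} gives $2n+1 \geq 5$ for $K_0$ and $n+2 \geq 4$ for $K_1$. In the remaining cases $1 < q < p$ with $-p/q = [a_0, \ldots, a_n]$ and $n \geq 1$, Theorem \ref{lensgeneral} produces $|(a_0+1)\cdots(a_n+1)| \geq 1$ $\normalfont\textsf{V}$s based at $(r, (\overline q + p)/p)$ together with at least one more $\normalfont\textsf{V}$ (or a slash) based at $(r, \overline q/p)$ — these sit at different rational Thurston--Bennequin levels, hence in different contact structures — and the theorem explicitly records that the mountain ranges lie in at least two distinct contact structures; the same applies to $K_1$. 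Since every rational unknot other than the $S^3$-unknot is one of $\pm K_0$ or $\pm K_1$ in some $L(p,q)$ with $p \geq 2$, this exhausts all cases.

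For the second claim, I would use the translation between transverse and Legendrian classification via negative stabilization (Etnyre--Honda, cited in the excerpt): a transverse representative of a knot type corresponds to a Legendrian representative up to negative stabilization, and a transverse knot is non-loose iff its Legendrian push-offs are non-loose. Equivalently, $K$ admits a non-loose transverse representative iff the mountain range of $K$ has a point from which one can negatively stabilize infinitely often staying non-loose — i.e. the mountain range contains a complete downward-left ray of non-loose Legendrians (a forward slash extended downward, in the sign conventions of $S_-$). But inspecting the mountain ranges in Theorems \ref{Lp1}, \ref{Lpp1}, \ref{l2n12}, \ref{lensgeneral}: every non-loose Legendrian rational unknot sits on a $\normalfont\textsf{V}$, a back slash, or a forward slash, and in each of these the negative stabilization $S_-$ of the bottom vertex (resp. the appropriate end) is \emph{loose} by definition of these mountain-range shapes. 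Hence no non-loose Legendrian rational unknot survives arbitrarily many negative stabilizations, so there is no non-loose transverse rational unknot. I would phrase this as: for a $\normalfont\textsf{V}$ based at $(a,b)$, $S_-(L_+^i)$ and $S_+(L^0)$ and $S_-(L^0)$ cover all negative-stabilization descendants and all are loose; similarly for the slashes. Therefore no transverse representative — obtained as a limit of negative stabilizations — can be non-loose.

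The only mild subtlety, and the step I would be most careful about, is the exact bookkeeping that the mountain ranges in Theorem \ref{lensgeneral} for the two values $\overline q/p$ and $(\overline q + p)/p$ (respectively $q/p$ and $(q+p)/p$ for $K_1$) really do lie in at least two distinct contact structures rather than accidentally all collapsing into one — but this is asserted outright at the end of Theorem \ref{lensgeneral}, so I would simply cite it. I should also make sure the edge cases $p = 2$ (only $K_0$), $q \equiv \pm 1$ (no $K_1$), and $n = 1$ in Theorem \ref{lensgeneral} are each explicitly checked against a named theorem, and note that reversing orientation ($K_0 \leftrightarrow -K_0$, $K_1 \leftrightarrow -K_1$) only relabels rotation numbers and swaps the two slash types, so it does not affect either count. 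With that, both sentences of the corollary follow.
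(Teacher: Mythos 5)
Your proposal is correct and follows essentially the same route as the paper: the existence of at least two contact structures is read off directly from the classification theorems (the paper cites Theorems~\ref{Lp1}, \ref{Lpp1}, and~\ref{lensgeneral}), and the nonexistence of non-loose transverse representatives follows because no non-loose Legendrian rational unknot survives arbitrarily many negative stabilizations, which is just the paper's observation that $\tb_\Q$ is bounded below on non-loose representatives, combined with the Legendrian-approximation correspondence. One throwaway clause in your write-up, that mountain ranges at different $\tb_\Q$ levels are ``hence in different contact structures,'' is not a valid inference (distinct contact structures are distinguished by Euler class or $d_3$, not by $\tb_\Q$ levels), but it is harmless since you immediately fall back on citing the explicit ``at least two distinct contact structures'' assertion in Theorem~\ref{lensgeneral}.
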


\subsection{Construction of non-loose knots via cabling} \label{sec:construction}
We begin by defining ``standard cables" of a null-homologous Legendrian knot and then exploring how non-looseness of a knot and its standard cable are related. We first recall that our slope convention is $\frac{\text{meridian}}{\text{longitude}}$. Given a null-homologous Legendrian knot $L$ in any contact manifold, let $N$ be a standard neighborhood of $L$ with ruling slope $q/p$. If $q/p > \tb(L)$, then we define the \dfn{standard positive $(p,q)$-cable} of $L$ as a ruling curve on $\partial N$ and denote by $L_{p,q}$.  
\begin{theorem}\label{thm:poscable}
  Let $(M,\xi)$ be an overtwisted contact $3$--manifold and $K$ a null-homologous knot in $M$. Suppose $L$ is a Legendrian representative of $K$ in $(M,\xi)$. Then for $q/p > \tb(L)$, the standard $(p,q)$-cable $L_{p,q}$ of $L$ is non-loose if and only if $L$ is non-loose.
  
\end{theorem}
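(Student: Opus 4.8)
The plan is to analyze the complement of the cable $L_{p,q}$ and relate its contact-geometric properties to those of the complement of $L$. Recall that $L_{p,q}$ is a ruling curve on the boundary of a standard neighborhood $N$ of $L$, chosen so that the ruling slope $q/p$ exceeds $\tb(L)$. Set $M \setminus N = M_0$, a manifold with torus boundary carrying a tight or overtwisted contact structure depending on whether $L$ is non-loose. First I would give a concrete model for a standard neighborhood $N'$ of $L_{p,q}$ sitting inside $M$: since $L_{p,q}$ lies on $\partial N$ with the ruling slope, a standard neighborhood $N'$ of it can be taken to be a thickened annulus neighborhood of the ruling curve in $\partial N$, fattened to both sides, so that $N'$ meets both $N$ and $M_0$. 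The key structural observation is that $M \setminus N'$ decomposes as the union of $N \setminus N'$ (a solid torus with two boundary tori, i.e. essentially $T^2 \times I$ union a meridian disk region — more precisely $N$ with a neighborhood of a $(p,q)$-curve on its boundary removed) glued to $M_0 \setminus N'$ (which is $M_0$ with a boundary-parallel piece removed, hence contactomorphic to $M_0$ after discarding a collar). Thus, up to the standard "rounding corners" bookkeeping, $M \setminus N'$ is built from $M_0$ by attaching a piece of $N$ along an appropriate torus.

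The forward direction — if $L$ is non-loose then $L_{p,q}$ is non-loose — is the part I expect to require the most care. Here I would argue that $M \setminus N'$ contains $M_0$ (which is tight by hypothesis) as the complement of a piece $P := (N \cup \text{collar}) \setminus N'$, and that $P$ itself, with its induced contact structure, is tight: $P$ sits inside the standard neighborhood $N$ of the Legendrian $L$ thickened slightly, and standard neighborhoods of Legendrian knots are tight, so any submanifold is tight. The real issue is ruling out an overtwisted disk that straddles the gluing torus $T = \partial N \cap (M \setminus N')$. For this I would invoke the convex surface / dividing curve analysis: arrange $T$ to be convex with dividing slope determined by $\tb(L)$, and observe that since $q/p > \tb(L)$, the ruling slope of the cable is not equal to the dividing slope, so the torus is "non-degenerate" in the right sense; then any overtwisted disk in $M\setminus N'$ could be isotoped (by the Giroux flexibility / Imbalance Principle) to avoid $T$ and hence lie entirely in the tight piece $M_0$ or the tight piece $P$ — contradiction. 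The condition $q/p > \tb(L)$ is exactly what guarantees the cable neighborhood $N'$ can be built disjointly from the dividing set of $\partial N$, and is presumably where this hypothesis is essential.

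The reverse direction — if $L_{p,q}$ is non-loose then $L$ is non-loose — should be easier: if $L$ were loose, there would be an overtwisted disk in $M \setminus N$. Since $N' \subset N \cup M_0$ and we may choose $N'$ so that $M \setminus N \subset M \setminus N'$ (i.e. $N' \subset N$ thickened into $M_0$ only slightly, away from the overtwisted disk), that same overtwisted disk lies in $M \setminus N'$, making $L_{p,q}$ loose. Slight care is needed because $N'$ pokes into $M_0$; but one can first isotope the overtwisted disk off of a collar of $\partial N$ in $M_0$, which is possible since overtwisted disks can be pushed into the interior. Finally, I would note that both implications together with the observation that $L_{p,q}$ is genuinely a Legendrian representative of the cable knot type (immediate from the construction) give the stated equivalence. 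The main obstacle, as indicated, is the convex-surface argument controlling overtwisted disks across the torus $\partial N$ in the forward direction; everything else is the standard neighborhood bookkeeping and gluing.
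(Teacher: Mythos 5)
Your setup is fine as far as it goes: the easy direction (loose $L$ implies loose $L_{p,q}$) is correct, and the decomposition of the cable complement along the torus $\partial N$ into the knot exterior $M_0$ (tight by hypothesis) and a piece contained in the standard neighborhood of $L$ (tight) is exactly how the paper also begins. The gap is in the sentence where you dispose of an overtwisted disk meeting the splitting torus: you claim that ``by Giroux flexibility / the Imbalance Principle'' any overtwisted disk in the cable complement can be isotoped off $T$ and hence must lie in one of the two tight pieces. That step is unfounded, and it is precisely the hard point of the theorem. A contact manifold cut by a torus into two tight pieces need not be tight (gluing along tori can create overtwistedness), so the existence of the tight decomposition does not by itself forbid an overtwisted disk that intersects $T$ essentially. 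Giroux flexibility only adjusts characteristic foliations within a fixed convex isotopy class, and the Imbalance Principle produces bypasses for a convex surface; neither gives a \emph{contact} isotopy moving an overtwisted disk off $T$, and a merely smooth isotopy of the disk does not preserve the property of being overtwisted.

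What one must do instead---and what the paper does---is keep the overtwisted disk fixed and move the splitting surface: by Colin's discretization of isotopy, the annulus $A=\partial N'\cap\overline{M\setminus N}$ is carried to an annulus disjoint from the disk through a finite sequence of bypass attachments, and the whole content of the proof is the inductive control (Lemma~\ref{claim1}) that after each bypass the resulting solid torus $S_i$ remains tight and still contains the standard neighborhood $N'$ of $L$. Establishing that requires ruling out intermediate convex tori of meridional dividing slope and a case analysis via the Farey graph and the stabilization relations of Lemmas~\ref{canstablize} and~\ref{seestab}, where the hypothesis $q/p>\tb(L)$ enters through rotation-number/stabilization contradictions (the cable is a negative stabilization of a Legendrian divide, so it cannot also be a positive one), not merely through ``building $N'$ disjoint from the dividing set'' as you suggest. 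Without an argument of this kind, your forward direction does not go through as written.
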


Now suppose that $L$ is a Legendrian knot and $q/p \leq \tb(L)$. If $N$ is a standard neighborhood of $L$, then inside of $N$ we can find a convex torus $T$ parallel to $\bd N$ with two dividing curves of slope $q/p$. We can define the \dfn{standard negative $(p,q)$-cable of $L$} as a Legendrian divide of $T$, but we note that there is some ambiguity here. If $q/p\in[\tb(L)-n, \tb(L)-n+1)$ then there are actually $n+1$ distinct tori with dividing slope $q/p$. See Section~\ref{kot} for more details, but in brief there are $n+1$ convex tori inside $N$ with dividing slope $\tb(L)-n$ and they come from stabilizing $L$, $n$ times (there are $n+1$ ways to do this), and then the tori $T$ with dividing slope $q/p$ are determined by those stabilizations. Thus we see that a standard cable of $L$ is also a standard cable of a $n-1$ times stabilization $L'$ of $L$ and then $q/p\in(\tb(L')-1,\tb(L'))$. Thus we will only define the standard cable of a Legendrian knot $L$ if $q/p$ satisfies this condition. Then there are only two possibilities for the torus $T$ and we denote the corresponding dividing curves by $L_{p,q}^\pm$ depending on whether the torus $T$ inside $N$ but outside a positive or negative stabilization of $L$. We call $L_{p,q}^\pm$ the \dfn{$\pm$ standard negative $(p,q)$-cable of $L$}. 
\begin{theorem}\label{thm:negcable} 
  Let $(M,\xi)$ be an overtwisted contact $3$--manifold and $L$ be a Legendrian knot in $(M,\xi)$. 
  If  $S_\pm(L)$ is non-loose, then 
  \[
 \text{$L_{p,q}^\pm$ is non-loose for all $q/p\in (\tb(L)-1,\tb(L))$.}
  \]
  Here $S_\pm(L)$ denotes the $\pm$ stabilization of $L$.
  \end{theorem}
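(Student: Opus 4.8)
The plan is to reduce the statement to Theorem~\ref{thm:poscable} by recognizing the $\pm$ standard negative $(p,q)$-cable of $L$ as a standard \emph{positive} cable of the stabilization $S_\pm(L)$. First I would set $L' = S_\pm(L)$, so that $\tb(L') = \tb(L) - 1$, and observe that for $q/p \in (\tb(L)-1, \tb(L)) = (\tb(L'), \tb(L')+1)$ we have $q/p > \tb(L')$, which is exactly the hypothesis needed to define the standard positive cable $L'_{p,q}$. The key geometric claim is then that $L_{p,q}^\pm$ is contact isotopic to $L'_{p,q}$. This should follow from the discussion in Section~\ref{kot} recalled in the excerpt: a standard neighborhood $N$ of $L$ contains a convex torus $T$ parallel to $\bd N$ with two dividing curves of slope $q/p$, and the $+$ (resp. $-$) choice of $T$ is precisely the one lying outside a positive (resp. negative) stabilization of $L$; that is, $T = \bd N'$ where $N'$ is a standard neighborhood of $L' = S_\pm(L)$ with boundary slope $q/p$. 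A Legendrian divide of $T = \bd N'$ with slope $q/p$ is, by definition, a ruling curve realizing the standard positive $(p,q)$-cable of $L'$, since $q/p > \tb(L')$. Hence $L_{p,q}^\pm = L'_{p,q}$ up to contact isotopy.

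With this identification in hand, the theorem is immediate: Theorem~\ref{thm:poscable}, applied to the Legendrian knot $L' = S_\pm(L)$ and the slope $q/p > \tb(L')$, tells us that $L'_{p,q}$ is non-loose if and only if $L'$ is non-loose. Since $L' = S_\pm(L)$ is assumed non-loose, we conclude $L_{p,q}^\pm = L'_{p,q}$ is non-loose for every $q/p \in (\tb(L)-1,\tb(L))$, as claimed.

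The main obstacle I anticipate is making the identification $L_{p,q}^\pm \simeq (S_\pm(L))_{p,q}$ genuinely precise, i.e.\ carefully matching the two combinatorial descriptions of the convex torus $T$: the one coming from the hierarchy of $n+1$ tori inside $N$ obtained by the $n$-fold stabilization bookkeeping, and the one coming from viewing $T$ as the convex boundary of a standard neighborhood of a once-stabilized Legendrian. This is essentially a check that the convex surface theory / bypass attachment picture for stabilizing a Legendrian knot (as in the standard references on convex surfaces in contact $3$--manifolds) identifies "$T$ lies outside the $\pm$ stabilization of $L$" with "$T = \bd(\text{std.\ nbhd.\ of } S_\pm(L))$." Once that dictionary is nailed down, everything else is formal. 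A minor additional point to address is that the knot type and framing data match up so that the cable slope $q/p$ is interpreted consistently for $L$ and for $S_\pm(L)$ (both are null-homologous with the same Seifert framing), which is routine since stabilization does not change the smooth knot type or the $0$-framing.
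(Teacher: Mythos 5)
Your reduction hinges on the claim that $L_{p,q}^\pm$ is contact isotopic to the standard positive cable $(S_\pm(L))_{p,q}$ of $L'=S_\pm(L)$, and that claim is false. A standard neighborhood $N'$ of $S_\pm(L)$ has convex boundary with dividing slope $\tb(S_\pm(L))=\tb(L)-1\in\Z$, so the convex torus $T$ with dividing slope $q/p\notin\Z$ used to define $L_{p,q}^\pm$ is never of the form $\partial N'$; it sits strictly between $\partial N'$ and $\partial N(L)$. Moreover $L_{p,q}^\pm$ is a \emph{Legendrian divide} on $T$, while $(S_\pm(L))_{p,q}$ is a \emph{ruling curve} on $\partial N'$, and these are not even isotopic as Legendrian knots: by Lemma~\ref{lemma:tb}, $\tb(L_{p,q}^\pm)=pq$, whereas $\tb\bigl((S_\pm(L))_{p,q}\bigr)=pq-\left|p(\tb(L)-1)-q\right|<pq$ since $q/p>\tb(L)-1$. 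This is exactly the content of Proposition~\ref{relation}: $(S_\pm(L))_{p,q}$ is the $n$-fold $\pm$ stabilization of $L_{p,q}^\pm$ with $n=\left|p\,\tb(S_\pm(L))-q\right|>0$, not $L_{p,q}^\pm$ itself. So the sentence ``$T=\partial N'$ with boundary slope $q/p$'' and the conclusion ``$L_{p,q}^\pm=L'_{p,q}$ up to contact isotopy'' are both wrong, and the direct appeal to Theorem~\ref{thm:poscable} does not go through as written.

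The underlying strategy can be repaired, but only by replacing the false identification with Proposition~\ref{relation} and one extra observation: a stabilization of a loose Legendrian knot is loose (stabilization is performed in an arbitrarily small neighborhood of the knot, so an overtwisted disk in the complement persists). Then: if $L_{p,q}^\pm$ were loose, so would be $S_\pm^n(L_{p,q}^\pm)=(S_\pm(L))_{p,q}$, contradicting Theorem~\ref{thm:poscable} applied to the non-loose knot $S_\pm(L)$ with $q/p>\tb(S_\pm(L))$. Note this is not the route the paper takes: the paper proves Theorem~\ref{thm:negcable} directly, assuming $L_{p,q}^\pm$ is loose, using Colin's discretization of isotopy to move the torus $\partial N(L)$ off an overtwisted disk through bypass attachments, and showing inductively---using that $S_\pm(L)$ is non-loose through contact isotopic copies of $\partial N(S_\pm(L))$ persisting inside the moving solid torus---that the complement of each intermediate torus stays tight, a contradiction. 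In any case, as submitted your proof has a genuine gap at its central step.
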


We notice that whenever we construct a $\pm$ standard negative $(p,q)$-cable one can also construct a standard positive $(p,q)$-cable. Specifically for a Legendrian knot $L$ we define the $L^\pm_{p,q}$ , standard $\pm$ negative cable of $L$ as above, and we can also define $(S_\pm(L))_{p,q}$ the standard positive $(p,q)$-cable of $S_\pm(L)$, as above. We note the following simple result.
\begin{proposition}\label{relation}
With the notation above $(S_\pm(L))_{p,q}$ is the $n$ fold $\pm$ stabilization of $L^\pm_{p,q}$, where $n=\tb(S_\pm(L))\cdot \frac qp$.
\end{proposition}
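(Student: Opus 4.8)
The plan is to work entirely inside the standard neighborhood $N$ of $L$ and track how the two cabling constructions differ by a layer of basic slices / stabilizations. Recall that $L^\pm_{p,q}$ is defined as a Legendrian divide on a convex torus $T$ sitting inside $N$, just outside a $\pm$-stabilization of $L$, with dividing (and divide) slope $q/p$ where $q/p\in(\tb(L)-1,\tb(L))$. On the other hand, $(S_\pm(L))_{p,q}$ is a ruling curve on the boundary of a standard neighborhood $N'$ of $S_\pm(L)$, where now the relevant slope satisfies $q/p>\tb(S_\pm(L))=\tb(L)-1$, so $q/p$ is indeed a legal ruling slope for $N'$. The key geometric observation is that $T$ from the negative-cable construction can be taken to be exactly $\bd N'$ (this is essentially the content of the description in Section~\ref{kot}: the tori with dividing slope $q/p$ inside $N$ are the ones bounding standard neighborhoods of the stabilizations of $L$). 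So both knots live on the same torus; one is a Legendrian divide and the other is a ruling curve of slope $q/p$ on that torus.

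The next step is the standard fact relating divides and rulings on a convex torus: a ruling curve of slope $q/p$ on a convex torus with two dividing curves of slope $q/p$ is obtained from a Legendrian divide (of the same slope) by stabilizing; more precisely, the ruling curve is a stabilization of the divide some number of times, and the number of stabilizations is computed by the ``twisting'' difference, which for a torus of this type is $\tb_\Q$ of the divide measured against the ruling framing. Concretely, on $\bd N'$ the contact framing of the Legendrian divide, measured relative to the framing coming from the knot type $L_{p,q}$ (i.e., the $(p,q)$-cable framing), differs from the ruling-curve framing by $n$, where $n$ is the intersection number of the slope-$q/p$ curve with the slope-$\tb(S_\pm(L))$ curve, i.e. $n = \tb(S_\pm(L))\cdot q - p \cdot (\text{something})$; carrying out the slope arithmetic in our meridian/longitude convention gives exactly $n=\tb(S_\pm(L))\cdot\frac{q}{p}$ as stated (this is the quantity $p\cdot q\cdot\tb(S_\pm(L))/p$, i.e.\ the product of the cabling integer with the framing). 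Finally, one checks the \emph{sign} of the stabilization: because $T$ lies just outside the \emph{$\pm$}-stabilization of $L$, the ruling curve sits on the ``$\pm$ side'' of the divide, so the stabilizations added are all of type $\pm$; hence $(S_\pm(L))_{p,q}=S_\pm^n(L^\pm_{p,q})$.

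The main obstacle I expect is getting the bookkeeping of slopes and framings exactly right: one has to be careful that $\tb_\Q$, the rational framing, the $(p,q)$-cable framing, and the integer $n$ all match up under the stated slope convention $\frac{\text{meridian}}{\text{longitude}}$, and in particular that $n=\tb(S_\pm(L))\cdot\frac qp$ comes out as a genuine non-negative integer (it is, because $q/p>\tb(S_\pm(L))$ forces the divide to require stabilization to become the ruling curve, and $p\mid q\cdot\tb(S_\pm(L))\cdot\ldots$ works out in the cable). The remaining ingredients—identifying $T$ with $\bd N'$, and the divide-to-ruling stabilization dictionary—are standard convex-surface-theory facts (as in Honda's and Etnyre's work and already used implicitly in Section~\ref{kot}), so no new ideas are needed there; the proof is really just a careful unwinding of the two definitions on the common torus.
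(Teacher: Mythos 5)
Your central geometric claim --- that the torus $T$ carrying the Legendrian divide $L^\pm_{p,q}$ ``can be taken to be exactly $\bd N'$,'' the boundary of a standard neighborhood of $S_\pm(L)$ --- is false, and the rest of the argument leans on it. The dividing slope of $\bd N'$ is the integer $\tb(S_\pm(L))=\tb(L)-1$, while $T$ has dividing slope $q/p\in(\tb(L)-1,\tb(L))$, which is not an integer; a convex torus has a well-defined dividing slope, so these cannot be the same torus (standard neighborhoods always have integral dividing slope; the tori of slope $q/p$ inside $N$ are \emph{determined by} the stabilizations of $L$, they do not bound standard neighborhoods of them). Relatedly, the ``standard fact'' you invoke --- a ruling curve of slope $q/p$ on a convex torus whose dividing curves also have slope $q/p$ --- is not a meaningful statement, since ruling slopes are by definition distinct from the dividing slope. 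The two knots live on two \emph{different} tori, $\bd N'$ and $T$, and the correct comparison is precisely Lemma~\ref{seestab} applied to the thickened torus between them (a union of $\pm$ basic slices sitting inside the $\pm$ basic slice $N\setminus N'$): the ruling curve of slope $q/p$ on $\bd N'$, which is $(S_\pm(L))_{p,q}$, is an iterated $\pm$ stabilization of the Legendrian divide on $T$, which is $L^\pm_{p,q}$. This is exactly how the paper disposes of the proposition, by quoting Lemma~\ref{seestab}.

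Second, you never actually compute $n$, and you misread the formula. You treat $\tb(S_\pm(L))\cdot\frac qp$ as a literal product of rationals and then worry about its integrality; what is meant is the pairing $\frac ab\bigcdot\frac cd=ad-bc$ of Section~\ref{fgraph}, so $n=\left|\tb(S_\pm(L))\,p-q\right|$, which is the count produced by Lemma~\ref{seestab} and equals the difference of Thurston--Bennequin invariants computed via Lemma~\ref{lemma:tb}: $\tb(L^\pm_{p,q})=pq$ while $\tb\bigl((S_\pm(L))_{p,q}\bigr)=pq-\left|\tb(S_\pm(L))\,p-q\right|$. A concrete check: for $\tb(S_\pm(L))=2$ and $(p,q)=(2,5)$ the two invariants are $10$ and $9$, so exactly one stabilization relates the two knots, whereas your reading would give $5$. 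So the ``slope arithmetic'' step as you describe it produces the wrong count; the proof needs the divide-versus-ruling statement for basic slices and the intersection-number formula, not a product of the framing with $q/p$. Your remark about the sign of the stabilizations is morally correct, but it too should be justified by noting that all basic slices between $\bd N'$ and $T$ inherit the sign $\pm$ of $N\setminus N'$, which again is part of Lemma~\ref{seestab} rather than a statement about ``sides'' of a single torus.
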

Our two theorems above show that if $S_\pm(L)$ is non-loose then not only is $L^\pm_{p,q}$ non-loose but so is its $n$ fold $\pm$ stabilization. This proposition shows that we could define the standard positive $(p,q)$-cable in terms of the standard negative cable in certain circumstances, but the standard positive $(p,q)$-cable is defined in greater generality (in particular, we don't need to know that $L$ can be destabilized). 

\begin{figure}[htbp]
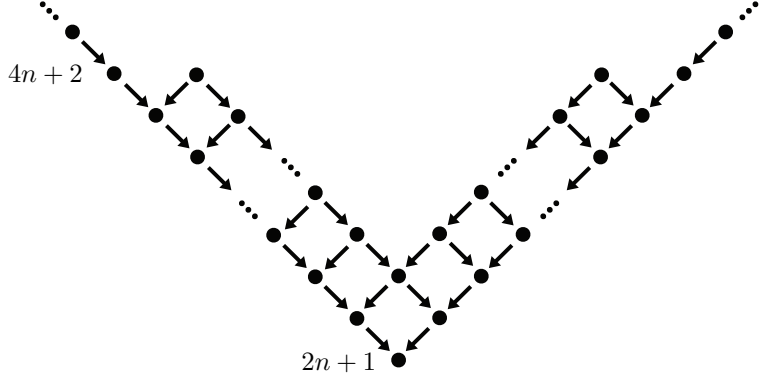
\small
  \vspace{0.1cm}
  \begin{overpic}
  {figures/torusknot}
    \put(-12, 108){$4n+2$}
    \put(99, -1){$2n+1$}
  \end{overpic}
  \caption{Mountain range for the $(2,2n+1)$-torus knots in $\xi_1$. }
  \label{fig:mrtorus}
\end{figure}
\begin{remark}
We notice that while our ``standard cables" allow us to construct many non-loose knots in cabled knot types, there are many non-loose representatives that do not come from this construction. We demonstrate this with the $(2,2n+1)$-torus knots which are of course $(2,2n+1)$-cables of the unknot. In Figure~\ref{fig:mrtorus} the mountain range for the non-loose $(2,2n+1)$-torus knots in $(S^3,\xi_1)$ is given. Consider the non-loose unknot with $\tb=1$ in $(S^3,\xi_1)$. Since $(2n+1)/2>1$ we see that the $(2,2n+1)$-cable is a positive cable of this Legendrian knot, and the standard positive cable will have $\tb=2n+3$. This is the middle dot in the third row from the bottom of the figure, we call this the vertex of the inner $V$. The upper two end points of the inner $V$ are the standard negative cables of the two Legendrian unknots with $tb=n+1$ and all the other points in the inner $V$ correspond to positive cables of unknots with $\tb<n+1$. These are all the non-loose representatives of the $(2,2n+1)$-cable of the unknot that can be seen via our construction. The infinitely many representatives in the outer $\normalfont\textsf{V}$ are not ``standard cables". Similarly, there are two other overtwisted contact structures, $\xi_0$ and $\xi_{1-2n}$, that admit non-loose representatives of the $(2,2n+1)$-torus knot and none of them can be seen by ``standard cables" either. 

We also note, that no non-loose negative torus knots are seen as standard cables of non-loose unknots. 
\end{remark}

Given a transverse knot $K$ we know there is a sequence of Legendrian approximations $L_n$ for $n$ sufficiently small such that, $\tb(L_n)=n$,  $S_-(L_n)=L_{n-1}$, and the transverse push-off of $L_n$ is $K$, see \cite[Section~2.2]{EtnyreHonda01}. Give any rational number $q/p$ then for any $n< \lfloor q/p\rfloor$ we can consider the standard $(p,q)$-cable, $(L_n)_{p,q}$, of $L_n$. We have the following simple observation
\begin{equation}
S^p_-((L_{n})_{p,q})=(L_{n+1})_{p,q},
\end{equation}
which follows from Lemma~\ref{seestab}. Since the transverse push-off of a Legendrian knot is transversely isotopic to the transverse push-off of the Legendrian knot after any number of negative stabilizations, we define the \dfn{standard $(p,q)$-cable of a transverse knot $K$}, denoted $K_{p,q}$, to be the transverse push of of $(L_n)_{p,q}$ for any Legendrian approximation $L_n$ with $n$ less than $q/p$. We can now state the result about cabling non-loose transverse knots.

\begin{theorem}\label{transversecable}
Let $(M,\xi)$ be an overtwisted contact $3$--manifold and $K$ be a transverse knot in $(M,\xi)$. Then $K_{p,q}$ is non-loose for any $(p,q)$ if and only if $K$ is non-loose. 
\end{theorem}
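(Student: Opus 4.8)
The plan is to reduce the transverse statement to the already-established Legendrian statements, Theorem~\ref{thm:poscable} and Theorem~\ref{thm:negcable}, using the dictionary between transverse knots and Legendrian knots modulo negative stabilization. Recall that $K_{p,q}$ was defined as the transverse push-off of $(L_n)_{p,q}$ for any Legendrian approximation $L_n$ of $K$ with $n<q/p$. The starting observation is that looseness is invariant under taking transverse push-offs and Legendrian approximations: a transverse knot is non-loose if and only if its complement is tight, and a standard neighborhood of a transverse knot deformation-retracts to (a standard neighborhood of) any of its Legendrian approximations and vice versa, so the complements have the same tightness status. Thus $K$ is non-loose if and only if some (equivalently every) Legendrian approximation $L_n$ is non-loose, and $K_{p,q}$ is non-loose if and only if $(L_n)_{p,q}$ is non-loose.

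First I would treat the case $n < \lfloor q/p\rfloor$ with $q/p > \tb(L_n)$, which is the generic situation once we take $n$ sufficiently negative (since $\tb(L_n)=n\to -\infty$ along the stabilization sequence, for any fixed $q/p$ we may choose $n$ small enough that $q/p > \tb(L_n)$). In that regime $(L_n)_{p,q}$ is a \emph{standard positive} $(p,q)$-cable in the sense of Section~\ref{sec:construction}, so Theorem~\ref{thm:poscable} applies directly: $(L_n)_{p,q}$ is non-loose if and only if $L_n$ is non-loose. Combining with the previous paragraph gives: $K_{p,q}$ is non-loose $\iff$ $(L_n)_{p,q}$ is non-loose $\iff$ $L_n$ is non-loose $\iff$ $K$ is non-loose, which is exactly the claim.

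The remaining point is consistency: the definition of $K_{p,q}$ allows \emph{any} Legendrian approximation $L_n$ with $n<q/p$, not only the very negative ones, so I should check that $(L_n)_{p,q}$ and $(L_{n+1})_{p,q}$ give transversely isotopic push-offs, which is displayed equation~(1) in the excerpt, $S_-^p((L_n)_{p,q})=(L_{n+1})_{p,q}$, a consequence of Lemma~\ref{seestab}; since transverse push-offs are unchanged under negative stabilization, $K_{p,q}$ is well-defined, and its looseness can be read off from any one approximation — in particular from a very negative one, where the argument of the previous paragraph applies. I expect the main (minor) obstacle to be bookkeeping the relationship between the ruling/dividing slope $q/p$, the contact framing $\tb(L_n)$, and the index $n$, to be sure that for the chosen $n$ we are genuinely in the positive-cable regime of Theorem~\ref{thm:poscable} rather than the negative-cable regime of Theorem~\ref{thm:negcable}; once $n$ is taken small enough this is automatic, so no delicate case analysis of the negative-cable situation is actually needed for the transverse statement. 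One could alternatively phrase the proof through Theorem~\ref{thm:negcable} using Proposition~\ref{relation}, but routing everything through the positive cable and a sufficiently negative approximation is the cleanest path.
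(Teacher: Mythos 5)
Your overall skeleton---pass to Legendrian approximations, apply Theorem~\ref{thm:poscable} to their cables, and translate back---is the same route the paper takes, but the bridge you use between the transverse and Legendrian settings is stated as a false equivalence, and this is a genuine gap rather than bookkeeping. You assert that a transverse knot is non-loose if and only if some (equivalently every) Legendrian approximation is non-loose, on the grounds that the complements ``have the same tightness status.'' Only one direction is true: if the transverse knot is non-loose then every Legendrian approximation is non-loose (equivalently, a loose Legendrian has loose transverse push-off), which is \cite[Proposition~1.2]{Etnyre13}. The converse fails: a single non-loose Legendrian approximation does not force its transverse push-off to be non-loose, because the complement of the transverse knot strictly contains the complement of a standard neighborhood of any fixed approximation, and tightness can be lost when that extra layer is included. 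This very paper supplies counterexamples: by Corollary~\ref{courseRU}, rational unknots admit non-loose Legendrian representatives but no non-loose transverse representatives at all, so those non-loose Legendrians have loose transverse push-offs. Consequently the step ``$(L_n)_{p,q}$ non-loose $\Rightarrow$ $K_{p,q}$ non-loose'' in your chain is unjustified, and taking $n$ very negative does not help, since the assertion for a single $n$ is exactly the false direction.

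The repair---and this is what the paper actually does---is to use the whole family rather than one approximation. If $K$ is non-loose then every $L_n$ is non-loose, so Theorem~\ref{thm:poscable} makes every $(L_n)_{p,q}$ non-loose; by the displayed relation before the statement of Theorem~\ref{transversecable} these cables are Legendrian approximations of $K_{p,q}$ related by negative stabilization, with $\tb$ tending to $-\infty$. If $K_{p,q}$ were loose, a (compact) overtwisted disk in its complement would be disjoint from a sufficiently thin standard neighborhood of $K_{p,q}$, so all Legendrian approximations of sufficiently negative $\tb$ would be loose, contradicting the non-looseness of the $(L_n)_{p,q}$. Your other direction survives once restated with the correct one-sided facts: if $K$ is loose then $L_n$ is loose for all sufficiently negative $n$ (not for every $n$, and not by your symmetry claim), hence $(L_n)_{p,q}$ is loose by Theorem~\ref{thm:poscable}, and the transverse push-off $K_{p,q}$ of a loose Legendrian is loose, again by \cite[Proposition~1.2]{Etnyre13}.
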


It is well-known that there are non-loose transversely non-simple knots, see \cite{Etnyre13} and Theoerem~\ref{allhaveinfinity} above, but all of the currently known examples rely on different amounts of Giroux torsion in the knot complement. Using cables we give the first examples of non-simple non-loose transverse knots with Giroux torsion zero in their complements. 

\begin{theorem}\label{thm:transnonsimple}
In the overtwisted contact structure $\xi_2$ on $S^3$, the $(2n+1,2)$-cable of the left-handed trefoil has at least $n$ distinct non-loose Legendrian representatives with $\tb = 4n+2$, $\rot = 2n-1$ and no Giroux torsion in its complement. There are also at least $n$ distinct non-loose transverse representatives with $\self=2n+3$ and no Giroux torsion in its complement. 
\end{theorem}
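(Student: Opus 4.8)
The plan is to realize the $(2n+1,2)$-cable of the left-handed trefoil $T$ as a standard negative $(2n+1,2)$-cable of non-loose Legendrian representatives of $T$ inside $(S^3,\xi_2)$, and then to tell the resulting cables apart by recovering the companion from the incompressible torus in their complements. First I would use the classification of non-loose Legendrian torus knots of \cite{EtnyreMinMukherjee22pre} to locate the non-loose Legendrian representatives of $T$ in $(S^3,\xi_2)$. Since the $(2n+1,2)$-cable has dividing/ruling slope $2/(2n+1)$, and since the Thurston--Bennequin invariant of a standard negative $(p,q)$-cable is $pq$, to reach $\tb=4n+2$ the companion is forced to have $\tb=1$; then $2/(2n+1)\in(\tb(L)-1,\tb(L))=(0,1)$ for every $n\ge 1$, so the standard negative $(2n+1,2)$-cable $L^\pm_{2n+1,2}$ is unambiguously defined. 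What I need to pull out of the classification is: a non-loose representative $L$ of $T$ with $\tb(L)=1$ and with $S_\pm(L)$ non-loose; enough such companions, sitting in the mountain range, that their standard negative $(2n+1,2)$-cables land at a single pair $(\rot,\tb)=(2n-1,4n+2)$, giving $n$ distinct Legendrian knots there; and the fact that these companions have no Giroux torsion in their complements.

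Given such companions, the next step is the cabling. For each $L$, form $L^\pm_{2n+1,2}$ (the Legendrian divide, with two dividing curves of slope $2/(2n+1)$, on the convex torus inside a standard neighborhood of $L$ lying outside a $\pm$ stabilization of $L$). Since $S_\pm(L)$ is non-loose, Theorem~\ref{thm:negcable} shows $L^\pm_{2n+1,2}$ is non-loose. I would then compute invariants: $\tb(L^\pm_{2n+1,2})=(2n+1)\cdot 2=4n+2$, and a computation of the rotation number of the divide relative to a Seifert surface for the cable (built from $2n+1$ parallel copies of a Seifert surface of $T$ together with the cabling annulus) gives $\rot(L^\pm_{2n+1,2})=2n-1$ for the appropriate sign, so the transverse push-off has $\self=\tb-\rot=2n+3$.

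Distinctness and the torsion statement are handled by the geometry of the cable complement. The complement of a $(2n+1,2)$-cable of $T$ is a graph manifold whose JSJ decomposition contains a canonical incompressible torus---the cabling torus---and cutting along it recovers the contact-geometric complement of the companion; hence pairwise non-contactomorphic companions yield pairwise non-contactomorphic cables, which together with the common values of $\tb$ and $\rot$ gives Legendrian (hence transverse) non-simplicity. For Giroux torsion, the complement of $C=L^\pm_{2n+1,2}$ splits along the cabling torus into the tight, torsion-free complement of the non-loose companion and a fixed cable-space piece ($T^2\times I$ with a neighborhood of a divide removed), which carries no Giroux torsion; since gluing torsion-free pieces along these tori creates none, $C$ has Giroux torsion zero. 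The transverse assertion then follows by passing to transverse push-offs and invoking Theorem~\ref{transversecable} with the Legendrian--transverse correspondence, the non-isotopy coming again from companion recovery.

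The hard part will be the interface between the classification and the cabling bookkeeping: determining exactly which non-loose companions in $\xi_2$ are carried by the standard negative $(2n+1,2)$-cabling onto representatives with the single invariant pair $(\rot,\tb)=(2n-1,4n+2)$, checking that precisely $n$ of them survive as non-loose knots, and then confirming that the $n$ cables so produced are pairwise distinct \emph{and} free of Giroux torsion---this last point being exactly what sets these examples apart from all previously known non-simple non-loose transverse knots.
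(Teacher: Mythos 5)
There is a genuine gap, and it is structural rather than a matter of missing details. Your plan produces the cables only as \emph{standard} negative $(2n+1,2)$-cables of non-loose Legendrian trefoils with $\tb=1$, but the $n$ representatives of the theorem cannot all arise this way. The paper's examples are Legendrian divides on convex tori of slope $2/(2n+1)$ sitting inside the $n$ distinct \emph{non-thickenable} solid tori $N_k^-$, $k=1,\ldots,n$, of Lemma~6.2 of \cite{EtnyreMinMukherjee22pre}, whose boundaries have dividing slope $1/k$. For $k\geq 2$ these solid tori are not standard neighborhoods of any Legendrian knot, and (repeating the argument of Theorem~1.12 of \cite{EtnyreLaFountainTosun12}) the solid torus of slope $2/(2n+1)$ inside $N_k^-$ cannot be thickened past slope $1/k<1$; in particular it does not sit inside a standard neighborhood of a $\tb=1$ trefoil, so the corresponding divide $L_k$ is not a standard cable of anything, and Theorem~\ref{thm:negcable} never sees it. Your construction can only yield a number of non-loose cables at $(\rot,\tb)=(2n-1,4n+2)$ that is bounded independently of $n$: by the classification in \cite{EtnyreMinMukherjee22pre} there are only finitely many non-loose Legendrian left-handed trefoils with $\tb=1$ in $\xi_2$ (their count does not grow with $n$), each contributing at most the two cables $L^{\pm}_{2n+1,2}$, and by Lemma~\ref{lemma:rot} the cable's rotation number is pinned down by the companion's, so you cannot manufacture $n$ distinct representatives with the same classical invariants from this input.

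Two further steps are also unjustified as written. The claim that ``gluing torsion-free pieces along these tori creates none'' is false in general: a Giroux torsion layer decomposes into basic slices each of which has zero torsion, so torsion can appear after gluing torsion-free pieces. The paper instead rules out torsion by showing every convex torus the knots $L_k$ sit on has a universally tight neighborhood and then invoking the destabilization criterion of \cite[Theorem~1.10]{ChakrabortyEtnyreMin20Pre} (a knot destabilizes iff it sits on a mixed torus), which simultaneously rules out destabilization and Giroux torsion in the complement. Similarly, distinctness does not follow from ``non-contactomorphic companions give non-contactomorphic cables'': one must recover contact-geometric data of the companion side from the Legendrian cable itself, and the paper does this via the state-transition argument from the proof of Theorem~1.7 of \cite{EtnyreLaFountainTosun12}, showing that the only convex torus on which $S_-^l(L_k)$ can sit is $\partial N_k^-$, so the distinct tori $\partial N_k^-$ distinguish the $L_k$ and their stabilizations. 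In short, the essential missing idea is the use of the non-thickenable tori $N_k^-$; without them neither the count of $n$, nor the distinctness, nor the torsion-freeness can be established by the route you propose.
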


\begin{remark}
We have similar results for all $(p,q)$-cables of the left handed trefoil with $q/p\in (0,1)$, but we only prove the above theorem since it already shows the existence of arbitrarily many transverse knots with the same self-linking and Giroux torsion zero. 
\end{remark}

\noindent
{\bf Acknowledgments.} The first author is partially supported by SFB/TRR 191 ``Symplectic Structures in Geometry, Algebra and Dynamics, funded by the Deutsche Forschungsgemeinschaff (Project- ID 281071066-TRR 191)'' and the Georgia Institute of Technology's Elaine M. Hubbard Distinguished Faculty Award. The second author was partially supported by National Science Foundation grants DMS-1906414 and DMS-2203312 and the Georgia Institute of Technology's Elaine M. Hubbard Distinguished Faculty Award. The authors thank Kenneth Baker for a helpful discussion about Theorem~\ref{characterize}.

\section{Background}\label{background}

We assume the reader is familiar with the basic ideas of contact geometry, Legendrian knots, and convex surfaces, as can be found in, for example, \cite{EtnyreHonda01, Geiges08,Honda00a}. We review parts of this below for the convenience of the reader and to establish notation. In Section~\ref{fgraph} we recall the Farey graph and discuss its relation to curves on tori and in the following section we recall the classification of tight contact structures on $T^2\times[0,1]$, solid tori, and lens spaces. In Section~\ref{kot} we review several aspects of knots in contact manifolds, such as standard neighborhoods and how these relate to stabilization. The last two sections discuss bypasses and rationally null-homologous knots, respectively. 

\subsection{The Farey graph}\label{fgraph}
The Farey graph is an essential tool to keep track of embedded essential curves on a torus. Recall that once a basis for $H_1(T^2)$ is chosen the embedded essential curves on the torus are in one to one correspondence with the rational numbers union infinity.

The Farey graph is constructed in the following way, see Figure~\ref{fareygraph}. Consider the unit disc in the $xy$-plane. Label the point $(0,1)$ as $0=\frac{0}{1}$ and $(0,-1)$ as $\infty=\frac{1}{0}$. Connect these two points by a straight line. Now if a point on the boundary of the disk has a positive $x$-coordinate and it lies half way between two points labeled $\frac{a}{b}$ and $\frac{c}{d}$ then we label it as $\frac{a+c}{b+d}$. We call this as the ``Farey sum" of $\frac{a}{b}$ and $\frac{c}{d}$ and write as $\frac{a}{b}\oplus\frac{c}{d}$ . Now we connect $\frac{a+c}{b+d}$ with the two points by hyperbolic geodesics (we can consider a hyperbolic metric on the interior of the disk). We iterate this process until all positive rational numbers are labeled point on the boundary of the unit disk. Now we do the same for points on the unit circle with negative $x$-coordinate, except that here we consider $\infty$ as $\frac{-1}{0}$.
\begin{figure}[htb]
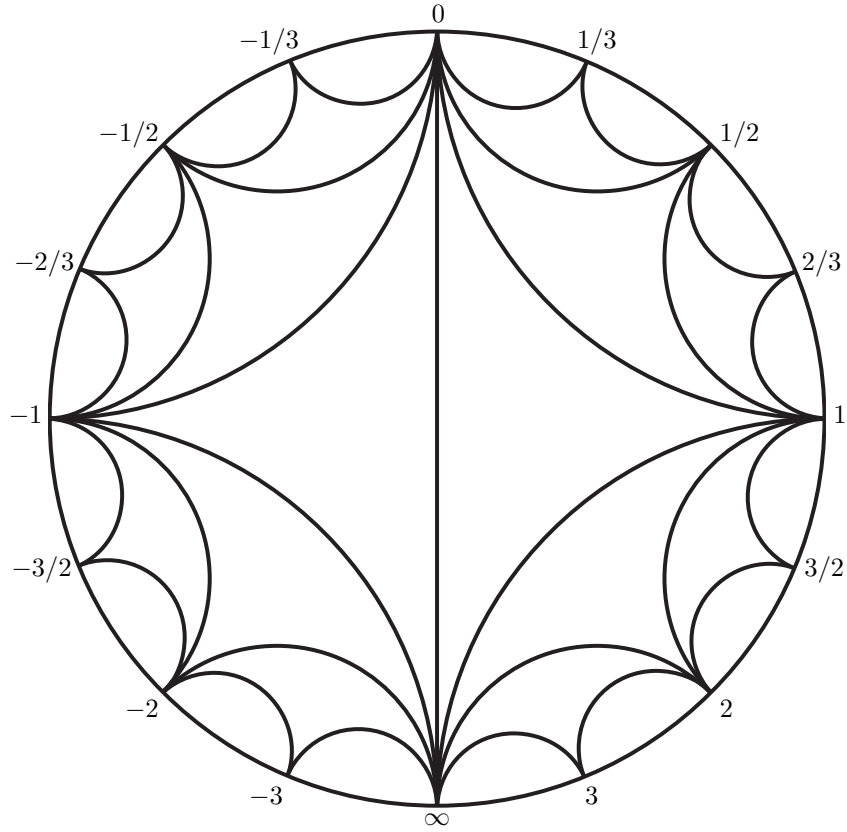
{\small
\begin{overpic}
{figures/farey}
\put(142, 2){$\infty$}
\put(145, 306){$0$}
\put(-15, 154){$-1$}
\put(297, 154){$1$}
\put(29, 43){$-2$}
\put(254, 43){$2$}
\put(19, 261){$-1/2$}
\put(254, 261){$1/2$}
\put(72, 296){$-1/3$}
\put(200, 296){$1/3$}
\put(-13, 212){$-2/3$}
\put(285, 212){$2/3$}
\put(-14, 96){$-3/2$}
\put(286, 96){$3/2$}
\put(76, 10){$-3$}
\put(203, 10){$3$}
\end{overpic}
\caption{The Farey graph.}
\label{fareygraph}}
\end{figure}

In this paper we will use $\frac{a}{b}\oplus k\frac{c}{d}$ for the iterated Farey sum where we add $\frac{a}{b}$ to $\frac{c}{d}$, $k$-times.

Note that, two embedded curves on the torus with slopes $r$ and $s$ will form a basis of $H_1(T^2)$ if and only if there is an edge between them in the Farey graph.
Here we also introduce the dot product of two rational numbers $\frac{a}{b}\bigcdot \frac{c}{d}=ad-bc$ and note that $|\frac{a}{b}\bigcdot \frac{c}{d}|$  is the minimum number of times the curves with slopes $\frac{a}{b}$ and $\frac{c}{d}$ can intersect.

We have the following well-known lemma, for example see \cite{EtnyreLaFountainTosun12}
\begin{lemma}
	Suppose $q/p<-1$. Given $q/p=[a_1,\dots,a_n]$, let $(q/p)^c=[a_1,\dots,a_n+1]$ and $(q/p)^a=[a_1,\dots,a_{n-1}]$. There will be an edge in the Farey graph between each pair of numbers $q/p, (q/p)^c$ and $(q/p)^a$. Moreover, $(q/p)^c$ will be farthest clockwise point from $q/p$ that is larger that $q/p$ with an edge to $q/p$, while $(q/p)^a$ will be the farthest anti-clockwise point from $q/p$ that is less than $q/p$ with an edge to $q/p$.
\end{lemma}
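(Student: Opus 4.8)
The plan is to deduce everything from an explicit description of the Farey neighbors of $v:=q/p$, obtained from the continued--fraction convergents together with the fact that $\mathrm{GL}_2(\Z)$ acts on the Farey graph by automorphisms. Write $c_k=[a_1,\dots,a_k]$ and choose integer representatives $(x_k,y_k)$ of $c_k$ recursively by $(x_{-1},y_{-1})=(0,-1)$, $(x_0,y_0)=(1,0)$, $x_k=a_kx_{k-1}-x_{k-2}$, $y_k=a_ky_{k-1}-y_{k-2}$. A one--line induction gives $x_ky_{k-1}-x_{k-1}y_k=-1$ for all $k\ge 0$, so $\bigl|\tfrac{x_n}{y_n}\bigcdot\tfrac{x_{n-1}}{y_{n-1}}\bigr|=1$ and $v=c_n$ is joined to $(v)^a=c_{n-1}$ by an edge. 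Computing the $n$--th convergent of $[a_1,\dots,a_{n-1},a_n+1]$ with the same recursion shows that $(v)^c$ has representative $(x_n+x_{n-1},\,y_n+y_{n-1})$; that is, $(v)^c$ is the ``Farey sum'' of $c_n$ and $c_{n-1}$ for these representatives, and since the Farey sum of two adjacent vertices is adjacent to both, $(v)^c$ is joined by an edge to each of $v$ and $(v)^a$. This settles the three edges, and a short sign computation (using that $|y_k|$ is strictly increasing for $k\ge 1$, so $y_n$ and $y_n+y_{n-1}$ have the same sign) gives $(v)^c>v$.

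For the extremal assertions I would use $g^{-1}=\matrixs{x_n}{-x_{n-1}}{y_n}{-y_{n-1}}\in\mathrm{SL}_2(\Z)$, which sends $\infty\mapsto v$, $0\mapsto (v)^a$, $-1\mapsto (v)^c$, and in general $t\mapsto w_t:=\frac{x_nt-x_{n-1}}{y_nt-y_{n-1}}$. Since $g^{-1}$ is a Farey automorphism and the neighbors of $\infty$ are exactly the integers, the Farey neighbors of $v$ are exactly $\{w_t:t\in\Z\}$; in particular $w_0=(v)^a$ and $w_{-1}=(v)^c$. One computes $w_t-v=\dfrac{-1}{y_n(y_nt-y_{n-1})}$ and checks that $y_{n-1}/y_n=1/[a_n,a_{n-1},\dots,a_2]\in(-1,0)$ for $n\ge 2$ (and equals $0$ when $n=1$, in which case $w_0=\infty$). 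Hence over the integers $w_t-v$ is positive exactly for $t\le -1$ and negative for $t\ge 0$, and $|w_t-v|=\bigl(y_n^2\,|t-y_{n-1}/y_n|\bigr)^{-1}$ is largest, among $t\le -1$, at the integer nearest $y_{n-1}/y_n$, namely $t=-1$. Thus $(v)^c=w_{-1}$ is the largest Farey neighbor of $v$ exceeding $v$; and since $v<-1$, the clockwise arc from $v$ up to $\infty$ consists exactly of the reals greater than $v$, so $(v)^c$ is the farthest clockwise neighbor larger than $v$. The same estimate on the side $t\ge 0$ identifies $w_0=(v)^a$ as the least real Farey neighbor below $v$ --- and as $\infty$ itself when $n=1$ --- which is the farthest anti--clockwise neighbor less than $v$. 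The case $n=1$, where $v=a_1\in\Z$ has neighbor set $\{a_1\pm 1/d:d\ge 1\}\cup\{\infty\}$, can be read off directly and is consistent with this.

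The main difficulty is not any single computation but the bookkeeping at $\infty$ and the translation between ``clockwise/anti--clockwise'' and ``larger/smaller'': because $v<-1$ one must fix the convention that, for this lemma, $\infty$ is regarded as lying on the ``less than $v$'' side, so that $(v)^a$ may legitimately equal $\infty$ while $(v)^c$ is always a genuine rational, and one must keep the orientation of the Farey circle straight under the $\mathrm{SL}_2(\Z)$--action. A cleaner--looking but not obviously shorter alternative is induction on $n$: the map $x\mapsto a_1-1/x$ is an orientation--preserving automorphism of the Farey graph carrying the whole configuration for $[a_2,\dots,a_n]$ to that for $[a_1,\dots,a_n]$, so the inductive step reduces to tracking how this map permutes the arc of neighbors between $v$ and $\infty$.
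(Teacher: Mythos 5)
Your proof is correct and complete. Note that the paper itself does not prove this lemma at all --- it is stated as ``well-known'' with a pointer to \cite{EtnyreLaFountainTosun12} --- so there is no in-text argument to compare against; what you have written is a legitimate self-contained proof along the standard lines. The key steps all check out: the convergent recursion $x_k=a_kx_{k-1}-x_{k-2}$, $y_k=a_ky_{k-1}-y_{k-2}$ with the determinant identity $x_ky_{k-1}-x_{k-1}y_k=-1$ gives the edge from $q/p=x_n/y_n$ to $(q/p)^a=x_{n-1}/y_{n-1}$; identifying $(q/p)^c$ with the mediant $(x_n+x_{n-1})/(y_n+y_{n-1})$ gives the other two edges; and transporting the neighbor set of $\infty$ (the integers) by the Farey automorphism $g^{-1}=\matrixs{x_n}{-x_{n-1}}{y_n}{-y_{n-1}}$ yields $w_t-v=-1/\bigl(y_n(y_nt-y_{n-1})\bigr)$, which together with $y_{n-1}/y_n\in(-1,0]$ (from the alternating signs and strict growth of $|y_k|$) pins down $t=-1$ and $t=0$ as the extremal neighbors on the two sides. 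You also correctly isolate the only delicate points: the case $n=1$, where $w_0=\infty$, matches the paper's stated convention that $(q/p)^a=\infty$ when $q/p$ is a negative integer, and the passage between numerical order and the clockwise/anti-clockwise order is valid because for $v<-1$ the clockwise arc from $v$ to $\infty$ consists precisely of the slopes larger than $v$. The handling of $a_n+1=-1$ is also unproblematic in your setup, since your computation only uses the value of $[a_1,\dots,a_n+1]$, which coincides with $[a_1,\dots,a_{n-1}+1]$.
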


In the above lemma if $a_n+1=-1$, then we consider $[a_1,\dots,a_n+1]$ to be  $[a_1,\dots,a_{n-1}+1]$. Also if $q/p$ is a negative integer, then $(q/p)^a = \infty$.

A path in the Farey graph is a sequence of elements $p_1,\dots ,p_k$ in $\Q \cup \infty$ moving clockwise such that each $p_i$ is connected to $p_{i+1}$ by an edge in the Farey graph, for $i<k$. Let $P$ be the minimal path in the Farey graph that starts at $s_0$ and goes clockwise to $s_1$. We say $P$ is a {\it decorated path} if all of its edges are decorated by a $+$ or a $-$. We call a path in the Farey graph a {\it continued fraction block} if there is a change of basis such that the path goes from $0$ clockwise to $n$ for some positive $n$. We say two choices of signs on the continued fraction block are related by {\it shuffling} if the number of $+$ signs in the continued fraction blocks are the same. 

Now we introduce some notations that we will frequently use in this paper. Given two numbers in $\Q\cup\infty$ we let $[r,s]$ denotes all those numbers in $\Q\cup\infty$ which are clockwise to $r$ in the Farey graph and anticlockwise to $s$.

\subsection{Contact structures on $T^2\times [0,1]$, solid tori and lens spaces} 

Here we briefly recall the classification of tight contact structures on $T^2\times [0,1]$, $S^1\times D^2$, {and} lens spaces due to Giroux \cite{Giroux2} and Honda\cite{Honda00a}. We discuss the classification along the lines of Honda.
\subsubsection{Contact structures on $T^2\times [0,1]$}
Consider a contact structure $\xi$ on $T^2\times[0,1]$ that has convex boundary with dividing curves of slope $s_0$ on $T_0=T^2\times\{0\}$ and $s_1$ on $T_1=T^2\times\{1\}$. We also assume that the number of dividing curves is two on each boundary component. We say $\xi$ is {\it minimally twisting} if any convex torus in $T^2\times[0,1]$ parallel to $T_0$ has dividing curves with slope in $[s_0,s_1]$. We denote the minimally twisting contact structures, up to isotopy, on $T^2\times[0,1]$ with the above boundary conditions as $\Tight^{min}(T^2\times[0,1]; s_0, s_1)$. Giroux \cite{Giroux2} and Honda \cite{Honda00a} classified contact structures in $\Tight^{min}(T^2\times[0,1]; s_0, s_1)$, establishing the following results. 

\begin{theorem}
Each decorated minimal path in the Farey graph from $s_0$ clockwise to $s_1$ describes an element of $\Tight^{min}(T^2\times[0,1]; s_0, s_1)$. Two such decorated paths will describe the same contact structure if and only if the decorations differs by shuffling in the continued fraction blocks.
\end{theorem}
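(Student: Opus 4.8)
The statement breaks into a \emph{realization} claim --- that every decorated minimal path gives a well-defined element of $\Tight^{min}(T^2\times[0,1];s_0,s_1)$ --- and a \emph{classification} claim --- that the induced map from decorated minimal paths to $\Tight^{min}(T^2\times[0,1];s_0,s_1)$ is onto with fibers exactly the shuffling classes. The plan rests on one building block, the \emph{basic slice}: a minimally twisting tight structure on $T^2\times[0,1]$ whose two boundary tori each carry two dividing curves, of slopes $r,r'$ joined by an edge of the Farey graph (equivalently $|r\bigcdot r'|=1$). The first thing I would record (following Honda) is that a basic slice has \emph{exactly two} tight models rel boundary, interchanged by an orientation-reversing diffeomorphism and distinguished by the sign of the Poincar\'e dual of their relative half-Euler class --- equivalently, by the sign of the bypass seen on a convex vertical annulus between the two boundary tori. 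These are the ``$+$'' and ``$-$'' slices one attaches to the corresponding edge.

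For realization: given a decorated minimal path $s_0=r_0,\dots,r_k=s_1$ with sign $\varepsilon_i$ on the edge $r_{i-1}r_i$, build $\xi$ by stacking the $\varepsilon_i$-slice on $T^2\times[\tfrac{i-1}{k},\tfrac{i}{k}]$, matching slope $r_i$ at each shared torus. Two checks make this legitimate. Minimal twisting: a parallel convex torus can be isotoped to lie in a single slice, where only the two neighbor slopes are realized, so every realized slope lies in $\{r_0,\dots,r_k\}\subseteq[s_0,s_1]$ because the path is \emph{minimal} --- no backtracking slopes occur. Tightness: invoke the gluing theorem for stacks of basic slices along a minimal path (or realize the stack inside a solid torus or lens space carrying a known tight structure, or apply Giroux's tightness criterion after a convex decomposition); minimality of the path is again exactly what prevents a slope jump creating an overtwisted disk. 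For surjectivity one runs Honda's discretization: perturb the boundary of a given $\xi\in\Tight^{min}(T^2\times[0,1];s_0,s_1)$ to be convex with two dividing curves of slopes $s_0,s_1$, then, using a convex vertical annulus between already-found tori together with the Imbalance Principle, Legendrian realization, and the resulting bypasses, produce convex tori $T^2\times\{t_i\}$ whose dividing slopes traverse \emph{exactly} the minimal path from $s_0$ to $s_1$ (minimal twisting forbids moving the wrong way, and the finite path forces termination). Each layer $T^2\times[t_{i-1},t_i]$ then has Farey-neighbor boundary slopes and two dividing curves per side, hence is a basic slice with a sign $\varepsilon_i$; reading off the $\varepsilon_i$ exhibits $\xi$ as coming from a decorated minimal path.

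It remains to identify the fibers. The path from $s_0$ to $s_1$ has a canonical decomposition into continued fraction blocks (read off its negative continued-fraction expansion after a change of basis). That shuffle-equivalent decorations give isotopic structures reduces, after a change of basis, to the model assertion that within a single block two adjacent basic slices of opposite sign may be interchanged --- the elementary case being slopes $0,1,2$ --- which I would prove by exhibiting the isotopy directly (Honda's shuffling/factoring lemma) and propagating it along the block, so that only the number of ``$+$''s in each block matters. For the converse I would use the relative half-Euler class of $\xi$, computed with respect to the trivialization fixed along the boundary, as the separating invariant: it is additive over the basic slices of a stack, within each continued fraction block it records precisely the number of ``$+$'' signs, and the contributions of distinct blocks are independent and cannot cancel --- so this invariant, together with the fixed block decomposition, is injective on shuffling classes. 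Combined with the upper bound from discretization-plus-shuffling, this makes \{decorated minimal paths\}$/$shuffling $\to \Tight^{min}(T^2\times[0,1];s_0,s_1)$ a bijection.

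The hard part will be precisely this last step: showing the relative half-Euler class is a \emph{complete} invariant modulo shuffling, i.e.\ that there are no unexpected coincidences among the stacked-slice structures beyond shuffling. The ``$\le$'' direction (every $\xi$ is such a stack, and shuffle-equivalent stacks coincide) is essentially formal once discretization is in hand, but actually \emph{distinguishing} the non-shuffle-equivalent stacks requires both an effective formula for the relative Euler class of an arbitrary stack and a proof that it separates shuffling classes. This bookkeeping of bypass signs across continued fraction blocks --- Honda's ``state traversal'' --- is the technical heart of the Giroux--Honda analysis and is where essentially all the work lies.
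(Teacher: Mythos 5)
This theorem is quoted in the paper as background and is not proved there: the paper attributes it to Giroux \cite{Giroux2} and Honda \cite{Honda00a}, so there is no in-paper argument to compare against, and your outline is in fact a summary of Honda's original convex-surface-theory proof (basic slices with exactly two tight structures, stacking for realization, discretization/factoring for surjectivity, the shuffling isotopy for the ``if'' direction, and the relative Euler class for the ``only if'' direction). At the level of strategy this is the right route, and you correctly locate where the work is.

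As a proof, however, what you have written is an outline rather than an argument: every load-bearing step (exactly two tight basic slices, tightness of a stacked minimal path, discretization into bypass moves, the shuffling lemma) is invoked by name, and the one step you yourself single out as the heart of the matter --- that the relative Euler class is injective on shuffling classes --- is asserted (``contributions of distinct blocks are independent and cannot cancel'') but not proved. That assertion does need an argument: within a block all edge vectors $r_i\ominus r_{i-1}$ coincide, but across blocks one must rule out relations $\sum_B c_B w_B=0$ with $|c_B|$ bounded by the block lengths, and this uses minimality of the path quantitatively (e.g.\ writing everything in the basis given by the first block vector and the vertex ending the first block, one checks the later block vectors grow fast enough that the component along that vertex of the last nonzero term strictly dominates the rest, so no such relation exists). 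Without something of this kind the classification half of the statement is not established. Two smaller cautions: tightness of the stacked structure does not follow from a naive ``gluing theorem'' (gluing tight pieces along tori can be overtwisted); the correct move, which you mention parenthetically, is to embed the stack in a known tight (e.g.\ Stein fillable) solid torus or lens space structure. And in the surjectivity step you need not only convex tori realizing the vertices of the minimal path but also that each intermediate layer with Farey-neighbor boundary slopes and two dividing curves per side is a basic slice --- i.e.\ the two-structure classification of basic slices, which is itself a substantial part of \cite{Honda00a} and should be stated as an input, not folded silently into the factoring.
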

Notice that if $s_0$ and $s_1$ shares exactly one edge in the Farey graph then there are exactly two tight contact structures in $\Tight^{min}(T^2\times[0,1]; s_0, s_1)$. These are called {\it basic slices} and the correspondence in the theorem can be understood via stacking basic slices according to the decoration in the path that describes the contact structure. The two different contact structures on a basic slice can be distinguished by their relative Euler class we call them {\it positive} and {\it negative basic slices}. 

The relative Euler class of the contact structure in $\Tight^{min}(T^2\times[0,1]; s_0, s_1)$  can be computed as follows: let $s_0=r_0, r_2,\ldots r_k=s_1$ be the vertices of the minimal path from $s_0$ to $s_1$ and let $\epsilon_i$ be the sign of the  basic slice corresponding to $r_{i-1}$ and $r_i$. Then the relative Euler class of the contact structure associated with this path is Poincar\'e dual to the curve 
\[\sum_{i=1}^{n}\epsilon_i(r_i\ominus r_{i-1})\] where $\frac{a}{b}\ominus \frac{c}{d}=\frac{a-c}{b-d}$.

Now suppose $P$ is a non-minimal path is the Farey graph. So there will be a vertex $v$ in $P$ such that there is an edge between its neighboring vertices $v'$ and $v''$. We can shorten this path by removing $v$ and the two edges and replacing it by the edge between $v'$ and $v''$. We call the new path $P'$. If $P$ were a decorated path, then we call the shortening to get $P'$ is \dfn{inconsistent} if the signs of the edges removed are different and \dfn{consistent} otherwise. When the shortening is consistent, then we can decorate the new edge in $P'$ by the sign of the removed edges, thus $P'$ is a new decorated path. 

Given any decorated path in the Farey graph, even non-minimal, one can construct a contact structure on $T^2\times [0,1]$ by stacking basic slices. We are interested to know when these paths will give us tight contact structures.  We have the following result due to Honda \cite{Honda00a}.

\begin{theorem}
Let $\xi$ be a contact structure on $T^2\times [0,1]$ described by a non-minimal decorated path $P$ in the Farey graph from $s_0$ to $s_1$. Then $\xi$ is tight if and only if one may consistently shorten the path to a shortest path from $s_0$ to $s_1$. 	
	\end{theorem}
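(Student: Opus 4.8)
The plan is to establish both directions via Honda's gluing/convexity machinery for $T^2 \times [0,1]$, using the basic-slice decomposition and the operation of shortening a decorated path.

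First I would handle the easy direction: if a non-minimal decorated path $P$ from $s_0$ to $s_1$ can be consistently shortened to a shortest path $P_0$ from $s_0$ to $s_1$, then the contact structure $\xi$ built by stacking basic slices along $P$ is tight. The point is that a consistent shortening corresponds to the following fact about basic slices: when two consecutive basic slices have the same sign and their outer slopes $v', v''$ share an edge, the union $T^2 \times [0,1]$ containing these two layers is contactomorphic to the single basic slice on the edge $v'v''$ with that same sign. This is a standard application of the classification of tight contact structures on $T^2\times[0,1]$ with two-edge boundary (the "gluing lemma"): one checks the glued piece is minimally twisting and contains no overtwisted disk by exhibiting it in the list $\Tight^{min}(T^2\times[0,1];v',v'')$. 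Iterating consistent shortenings transforms $P$ into $P_0$ while preserving the contactomorphism type of $\xi$; since $P_0$ is a minimal path, the resulting contact structure is one of the tight contact structures of the first theorem quoted above, hence tight.

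For the converse, suppose $\xi$ is tight and described by the non-minimal decorated path $P$. I would argue that one can always find \emph{some} non-minimal vertex $v$ of $P$ (a vertex whose neighbors $v', v''$ span an edge) at which the shortening is consistent. Indeed, at any non-minimal vertex $v$ the two basic slices adjacent to $v$ sit inside a $T^2\times[0,1]$ with boundary slopes $v', v''$ sharing one edge; if the two basic slices had opposite signs, this sub-layer would be a $T^2\times[0,1]$ built from a non-minimal path between slopes sharing a single edge with \emph{mixed} signs, which one checks directly is overtwisted (it contains a Giroux torsion $\pi$-layer, equivalently it is not in $\Tight^{min}$), contradicting tightness of $\xi$. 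So every non-minimal vertex admits a consistent shortening; performing it yields a shorter decorated path $P'$ still describing $\xi$ (by the gluing fact above) with the same endpoints. Repeating, the length strictly decreases at each step, so after finitely many consistent shortenings we reach a path with no non-minimal vertex, i.e.\ a shortest path from $s_0$ to $s_1$. This exhibits the required consistent shortening of $P$ to a shortest path.

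The main obstacle is the key local claim used in both directions: that a $T^2 \times [0,1]$ built from consecutive basic slices of \emph{opposite} sign across an edge-sharing triple of slopes is overtwisted, while one of the \emph{same} sign is a single basic slice. This is exactly the content of Honda's analysis of stacking basic slices (the non-minimally-twisting / Giroux torsion phenomenon), and I would invoke it citing \cite{Honda00a}; making it fully self-contained would require reproducing the convex-surface bypass computations that identify when a bypass attachment is "trivial" versus when it creates torsion. A secondary subtlety is bookkeeping: after a shortening the decorations on the unchanged edges are untouched, and one must track that the continued-fraction-block structure behaves well so that "shortest path" is reached and not merely "shorter"; this follows from the fact that each shortening strictly reduces the number of vertices and a path with no edge between neighbors of any interior vertex is automatically a geodesic in the Farey graph.
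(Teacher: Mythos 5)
The paper does not prove this statement at all: it is quoted as background and attributed to Honda \cite{Honda00a}, so there is no in-paper argument to compare yours against. Your outline is the standard reduction and its skeleton is sound: consistent shortenings do not change the contact structure, minimal decorated paths give tight structures, and in the tight case one shortens until the path is locally, hence globally, minimal. The load-bearing step, however, is exactly the local lemma you defer to Honda, and your parenthetical justification of its overtwisted half is not correct as stated. For a triple $v',v,v''$ spanning a triangle (so $v=v'\oplus v''$), saying the mixed-sign stacking ``is not in $\Tight^{min}$, equivalently contains a Giroux torsion $\pi$-layer'' does not give overtwistedness: tight, non-minimally-twisting structures on $T^2\times[0,1]$ exist (torsion layers are tight), so failing to be minimally twisting proves nothing by itself. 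Concretely, the relative Euler class of the mixed-sign union is Poincar\'e dual to $\pm$ a curve of slope $v$, which rules out the two basic slices on the edge $v'v''$ but not, a priori, a tight structure with twisting at least $\pi$; the real content of Honda's lemma is precisely to exclude that possibility (by showing the union contains no convex torus of slope outside $[v',v'']$, or by locating an overtwisted disk directly). Similarly, in the same-sign case the cleanest route is not ``checking the glued piece is in the list'' (which presupposes tightness) but factoring the basic slice on $v'v''$ along a convex torus of slope $v$ and using additivity of the relative Euler class to see both factors carry the same sign, whence the same-sign stacking is contactomorphic rel boundary to that basic slice.

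Two smaller points. The Farey-graph facts you assert are true but do need an argument: a clockwise path with no interior vertex whose neighbors span an edge is automatically shortest (a mediant/denominator-minimality induction does it), and this is what guarantees your induction terminates at a shortest path rather than merely a shorter one. Finally, note that a shortenable vertex is never interior to a continued fraction block, so the shuffling ambiguity in the decoration does not interfere with your sign analysis; it is worth saying this explicitly. With the local lemma either proved as above or honestly cited to \cite{Honda00a} (which is all the paper itself does for the entire theorem), your argument goes through.
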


We now discuss convex Giroux torsion. Consider $\xi=\ker(\sin2\pi z\, dx+\cos2\pi z\,  dy)$ on $T^2\times\R$ where $(x,y)$ is the cooordinate on $T^2$ and $z$ is the coordinate on $\R$. Consider the region $T^2\times[0,k]$ for $k\in\frac{1}{2}\N$ and notice that the contact planes twist $k$ times as $z$ goes from $0$ to $k$. We can perturb $T^2\times \{0\}$ and $T^2\times\{k\}$ so that they become convex with two dividing curves of slope $0$. Let $\xi^k$ denotes the resulting contact structure on $T^2\times[0,1]$ (After identifying $T^2\times [0,k]$ with $T^2\times [0,1]$). 

For $k \in \frac12\N$, we call $(T^2\times[0,1],\xi^k)$ a {\it convex $k$ Giroux torsion layer} and if it embeds into a contact manifold $(M,\xi)$, we say $(M,\xi)$ has {\it convex k Giroux torsion}. We say $(M,\xi)$ has exactly $k$ Giroux torsion if one can embed $(T^2\times[0,1],\xi^k)$ into $(M,\xi)$ but cannot embed $(T^2\times[0,1],\xi^{k+\frac{1}{2}})$ in $(M,\xi)$. On the other hand $(M,\xi)$ has no convex Giroux torsion or zero Giroux torsion if $(T^2\times[0,1],\xi^k)$ does not embed in $(M,\xi)$ for any $k\in\frac{1}{2}\N$.

\subsubsection{Contact structures on solid tori}\label{conttori}
Now we turn to the contact structures on $S^1\times D^2$. While we will usually use ``standard" coordinates on a solid torus so that the meridional slope is $-\infty=\infty$, it will sometimes be convenient to describe solid tori using other coordinate. We set up notation for this now. Consider $T^2\times [0,1]$ and choose a basis for $H_1(T^2)$ so that we may denote curves on $T^2$ by rational numbers (union infinity). Given $r\in\Q\cup\{\infty\}$ we can foliate $T^2\times \{0\}$ by curves of slope $r$. Let $S_r$ be the result collapsing each leaf in the foliation of $T^2\times \{0\}$ to a point. One may easily verify $S_r$ is a solid torus with meridional slope $r$. We say \dfn{$S_r$ is a solid torus with lower meridian $r$}. We could similarly foliate $T^2\times\{1\}$ by curves of slope $r$ and collapse them to obtain $S^r$. This is the \dfn{solid torus with upper meridian $r$}. Notice that the ``standard" solid torus is $S_\infty$, and if we do not indicate otherwise, this is the torus we are talking about. 

Let $\Tight(S_r,s)$ denote the tight contact structures on the solid torus $S_r$ with lower meridian $r$ and convex boundary having two dividing curves of slope $s$, and similarly for $\Tight(S^r,s)$. We call a path $P$ in the Farey graph moving clockwise \dfn{mostly decorated} if all but the first or last edge has a sign. We call it \dfn{mostly upper decorated} if there are signs on all but the first edge and \dfn{mostly lower decorated} if all the edges have a sign except for the last edge. 
Giroux \cite{Giroux2} and Honda \cite{Honda00a} classified tight contact structures on a solid torus. By a change of basis and using the notation above their result is the following. 

\begin{theorem}
	\label{thm:continued_block}
	The elements of $\Tight(S_r,s)$ are in one-to-one correspondence with equivalence classes of mostly upper decorated minimal paths in the Farey graph from $r$ clockwise to $s$ where two such paths are considered equivalent if they differ by shuffling in continued fraction blocks. 

Similarly, elements in $\Tight(S^r,s)$ are in one-to-one correspondence with equivalence classes of mostly lower decorated minimal paths in the Farey graph from $r$ clockwise to $s$ where two such paths are considered equivalent if they differ by shuffling in continued fraction blocks. 
\end{theorem}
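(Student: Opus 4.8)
The plan is to deduce this from Honda's classification of tight contact structures on $S^1\times D^2$ \cite{Honda00a} together with the classification of $\Tight^{min}(T^2\times[0,1];s_0,s_1)$ recalled above; the real content is the translation of Honda's bookkeeping into the Farey graph, so I would present the argument as a reduction to a thickened torus.

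First, I would reduce to $T^2\times[0,1]$. Fix $\xi\in\Tight(S_r,s)$, isotope the core $C$ of $S_r$ to be Legendrian, and let $N$ be a standard convex neighborhood of $C$. The dividing slopes of $\partial N$ that arise this way (over all Legendrian cores, each of which is a stabilization of one of the non-destabilizable ones) are exactly the vertices of the Farey graph adjacent to $r$, and after suitably stabilizing $C$ we may arrange the dividing slope $r'$ of $\partial N$ to be the first vertex of the minimal clockwise Farey path from $r$ to $s$. The contact structure on $N$ is the unique tight one with this boundary, so $\xi$ is determined by its restriction to $V:=S_r\setminus\interior N\cong T^2\times[0,1]$ together with the gluing. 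Any Giroux torsion in $V$ would make $\xi$ overtwisted, so $\xi|_V$ is minimally twisting and hence is described by a decorated minimal path $P'$ from $r'$ clockwise to $s$, well-defined up to shuffling in continued fraction blocks.

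Second, I would assemble the path and check the count. Prepending the edge joining $r$ and $r'$ to $P'$ produces a minimal clockwise path $P$ from $r$ to $s$ that is mostly upper decorated (its edges other than the first carry the signs of $P'$), and $\xi$ is recovered from $P$ by stacking basic slices and collapsing the leaves of slope $r$. Conversely every mostly upper decorated minimal path from $r$ to $s$ yields a tight contact structure on $S_r$ this way, giving surjectivity. For injectivity I would invoke the gluing lemma of \cite{Honda00a}: attaching either the positive or the negative basic slice abutting the meridian to $N$ — equivalently, positively or negatively stabilizing $C$ — produces the same solid torus; this is precisely why the first edge of $P$ carries no decoration, and it also shows that different admissible choices of $r'$ do not give genuinely different contact structures. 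Combined with Honda's statement that two decorated minimal paths for $V$ describe isotopic contact structures if and only if they differ by shuffling in continued fraction blocks, this yields the asserted bijection for $\Tight(S_r,s)$. The case $\Tight(S^r,s)$ then follows from the orientation-reversing symmetry of $T^2\times[0,1]$ exchanging its two boundary components and reversing the clockwise direction, under which a mostly upper decorated path becomes a mostly lower decorated one.

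The step I expect to be the main obstacle is the injectivity, i.e. showing that the only coincidences among mostly upper decorated minimal paths are shuffles. This rests on two points that must be stated carefully: the tight contact structure on the standard neighborhood $N$ of the Legendrian core is unique, and the two basic slices abutting the meridian are realized exactly by the positive and negative stabilizations of $C$, so the undecorated first edge neither loses information nor causes overcounting. Both follow from Honda's convex-surface analysis, but unwinding them in the Farey-graph normalization — in particular checking that every tight structure on $V$ obtained after cutting out $N$ admits a minimal describing path beginning with the chosen edge from $r$ to $r'$ — is where the actual work lies. The $T^2\times[0,1]$ input and surjectivity are comparatively routine.
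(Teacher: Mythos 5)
The paper does not actually prove this statement: it is quoted as the Giroux--Honda classification of tight contact structures on solid tori, ``by a change of basis,'' so your proposal has to be judged as a self-contained derivation from the $T^2\times[0,1]$ classification. Your skeleton is indeed the standard one (it is essentially Honda's own strategy): cut out a standard neighborhood $N$ of a Legendrian core whose boundary slope is the vertex $r'$ of the minimal path adjacent to the meridian, classify the complementary thickened torus by a decorated minimal path, and explain the undecorated first edge by the uniqueness of the tight structure on $N$.

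As written, though, several steps are wrong or substantially underestimated. First, the normalization is backwards: stabilizing a Legendrian core moves the dividing slope of its standard neighborhood \emph{away} from $r'$ (toward more stabilized vertices adjacent to $r$), so you cannot ``arrange the dividing slope to be $r'$ by suitably stabilizing $C$.'' What is needed is the existence of a \emph{maximally twisting} core, i.e.\ a convex torus of slope $r'$ in $S_r$ bounding a standard neighborhood; this comes from a bypass/Imbalance Principle (factoring) argument, not from stabilization. Second, ``any Giroux torsion in $V$ would make $\xi$ overtwisted, so $\xi|_V$ is minimally twisting'' is a non sequitur: absence of Giroux torsion does not imply minimal twisting. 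The correct argument is that a non-minimally twisting $V$ contains convex tori of every slope, in particular of the meridional slope $r$, and a Legendrian divide on such a torus bounds a meridional disc with twisting zero, i.e.\ an overtwisted disc in $S_r$. Third, you assert that every mostly upper decorated minimal path yields a \emph{tight} structure on $S_r$ and call this routine; it is not. Gluing a tight $T^2\times[0,1]$ onto a standard neighborhood does not automatically give a tight solid torus (the paper's own discussion of consistent shortenings of non-minimal paths shows how delicate such gluings are); this is the existence half of Honda's theorem and rests on realizing the models inside fillable or otherwise known-tight manifolds. Fourth, injectivity does not follow from the two facts you cite. Uniqueness of the tight structure on $N$ and the observation that both signs of the meridian-adjacent slice complete $N_\pm$ to the same $N$ explain why the first edge carries no sign, but they do not show that non-shuffle-equivalent decorations of the path from $r'$ to $s$ remain non-isotopic after gluing $N$ back in --- an isotopy of $S_r$ need not respect your decomposition. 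That distinctness is the lower-bound half of Honda's classification, proved via relative Euler classes, induction over continued fraction blocks, and the lens space classification, not by convex-surface bookkeeping alone. If instead you simply invoke Honda's solid-torus theorem itself (rather than only the thickened-torus theorem), all of these issues disappear and the statement reduces to the change of coordinates --- which is exactly how the paper treats it.
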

Just like for thickened tori, one can describe a contact structure on a solid tori by any path in the Farey graph, even if not minimal, but the contact structure might not be tight. Again, just as for thickened tori, if $P$ is a non-miminal path we can talk about inconsistent shortenings (same definition as for thickened tori) and consistent shortenings, which are any shortenings that are not inconsistent. When considering solid tori, there is a consistent shortening that was not seen in the thickened torus case.  Specifically, if one of the two edges that is removed when shortening the path is the unlabeled one, then we can shorten the path and leave the new edge unlabeled. Notice that this will remove one of the edges that had a sign (that is the unlabeled edge can consistently shorten, and ``absorb", any signed edge). 

\begin{theorem}
Let $\xi$ be a contact structure on $S_r$ described by a non-minimal mostly upper decorated path $P$ in the Farey graph from $r$ to $s$. Then $\xi$ is tight if and only if one may consistently shorten the path to a shortest path from $r$ to $s$. We have a similar statement for $S^r$. 
	\end{theorem}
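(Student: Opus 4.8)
The plan is to reduce the statement to Honda's non-minimal-path theorem for $T^2\times[0,1]$ stated above, together with the classification of tight contact structures on solid tori by minimal paths (Theorem~\ref{thm:continued_block}), using the fact that the unlabeled first edge of a mostly upper decorated path carves off a ``core neighborhood.'' Concretely, write $P=(r=r_0,r_1,\dots,r_k=s)$ with the edge $r_0r_1$ unlabeled and the remaining edges signed. By the way a path-described contact structure on a solid torus is built, $(S_r,\xi_P)$ decomposes as $N\cup(T^2\times[0,1])$, where $N=S_r$ has convex boundary of slope $r_1$ and carries its unique tight contact structure $\eta_N$ (the sign on the first edge is irrelevant precisely because collapsing the slope-$r$ foliation absorbs it), and the contact structure on the $T^2\times[0,1]$ piece is the one, call it $\xi_{P'}$, described by the decorated sub-path $P'=(r_1,\dots,r_k=s)$ from $r_1$ to $s$.

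For the forward direction, assume $\xi_P$ is tight. Then its restriction to the codimension-zero submanifold $T^2\times[0,1]$, namely $\xi_{P'}$, is tight, so by the thickened-torus theorem $P'$ consistently shortens to a minimal path $P'_{\min}$ from $r_1$ to $s$. Prepending the unlabeled edge $r\to r_1$ produces a mostly upper decorated path from $r$ to $s$; since every vertex other than the one incident to the unlabeled edge lies on $P'_{\min}$ and keeps its neighbors, the only possible failure of minimality is at that junction vertex, and when it occurs (i.e.\ when $r$ shares a Farey edge with the vertex two steps out) I would absorb the adjacent signed edge into the unlabeled edge — a consistent shortening — and iterate. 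The number of edges strictly drops at each step, so the process terminates in a mostly upper decorated minimal path from $r$ to $s$ obtained from $P$ by consistent shortenings, as desired.

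For the converse, assume $P$ consistently shortens to a mostly upper decorated minimal path $P_{\min}$. The key point is that the contact structure described by a path is unchanged by any consistent shortening: for a shortening internal to the decorated part this is the standard gluing lemma that two stacked basic slices of the same sign forming a length-two continued fraction block equal a single basic slice; for the extra ``absorb'' shortening it is the fact that attaching a basic slice of either sign to a core neighborhood $\eta_N$ again gives the unique tight solid torus with the new boundary slope — equivalently, a standard neighborhood of $S_+(L)$ or $S_-(L)$ for a Legendrian curve $L$ is a standard neighborhood of $L$ with a positive, respectively negative, basic slice attached, and these are the same solid torus with the same unique tight contact structure (see Section~\ref{kot}). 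Hence $\xi_P=\xi_{P_{\min}}$, which is tight by Theorem~\ref{thm:continued_block}. The case of $S^r$ is handled by the identical argument after reversing the orientation of the path so that the unlabeled edge becomes the last one.

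The step I expect to be the main obstacle is the ``absorb'' move, which has no analogue in the thickened-torus setting: one must know that an unlabeled edge can swallow a signed edge without changing the contact structure or destroying tightness, i.e.\ that attaching a basic slice of either sign to a core neighborhood reproduces a core neighborhood. I would isolate this as a short lemma; it is essentially contained in Honda's classification and in the relationship between standard neighborhoods and stabilization reviewed in Section~\ref{kot}, but stating and using it carefully is what makes the solid-torus case genuinely more than a transcription of the $T^2\times[0,1]$ case.
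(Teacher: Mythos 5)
The paper itself does not prove this statement: it is quoted in the background section as part of the Giroux--Honda classification \cite{Giroux2, Honda00a}, so there is no argument in the paper to compare yours against. Judged on its own terms, your reduction is correct and is essentially the standard way to deduce the solid-torus criterion from the thickened-torus one: split off the core along the first (unlabeled) edge, where the contact structure is the unique tight one on a solid torus with boundary slope $r_1$ Farey-adjacent to the meridian $r$; apply the $T^2\times[0,1]$ criterion to the signed subpath; and note that the ``absorb'' move is exactly the fact from Section~\ref{kot} that a standard neighborhood of $L$ decomposes as a standard neighborhood of $S_\pm(L)$ union a $\pm$ basic slice, so attaching a basic slice of \emph{either} sign to the unique tight torus of slope $r_1$ reproduces the unique tight torus of slope $r_2$, after which tightness of the shortened minimal path follows from Theorem~\ref{thm:continued_block}. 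Three small points to tighten. First, the configuration removed in an interior shortening is a Farey triangle, not a length-two continued fraction block (in the block $0\to1\to2$ there is no edge from $0$ to $2$); the fact you need is Honda's gluing of two same-sign basic slices in shortening position, not shuffling. Second, the termination of your iteration in the forward direction implicitly uses that a clockwise path with no edge between vertices two steps apart is already minimal; this is standard (uniqueness of continued fraction expansions with all coefficients $\leq -2$) but should be stated, since it is what guarantees the only possible failure of minimality is at the junction. Third, your sentence about stabilizations is inverted: $N(L)$ is $N(S_\pm(L))$ with a $\pm$ basic slice attached, not the reverse --- the content you actually use, which is the correct one, is unaffected.
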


We will discuss non-minimal paths further in the context of neighborhoods of Legendrian knots in the next section. 

\subsubsection{Contact structures on lens spaces}
The lens space $L(p,q)$ is defined to be $-p/q$ surgery on the unknot in $S^3$. Equivalently, we can think of $L(p,q)$ as $T^2\times [0,1]$ with the curves of slope $-p/q$ collapsed on $T^2\times\{0\}$ and curves of slope $0$ collapsed on $T^2\times\{1\}$, which could further be described as the result of gluing the torus $S_{-p/q}$ with lower meridian $-p/q$ to the torus $S^0$ with upper meridian $0$ by the identity along the boundary. Giroux \cite{Giroux2} and Honda \cite{Honda00a} classified tight contact structure on $L(p,q)$ proving the following. 
\begin{theorem}
Let $P$ be a minimal path in the Farey graph from $-p/q$ clockwise to $0$. The tight contact structures on $L(p,q)$ are in one-to-one correspondence with assignments of signs to all but the first and last edge in $P$ up to shuffling in continued fraction blocks. 
\end{theorem}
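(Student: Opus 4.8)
The plan is to realize $L(p,q)=S_{-p/q}\cup_{\mathrm{id}} S^0$ along its Heegaard torus $T$ and reduce the classification to the already-established classifications of tight contact structures on thickened tori and solid tori, by putting $T$ into a normal form that leaves all the moduli in a single solid-torus piece. Let $\xi$ be tight on $L(p,q)$. First I would make $T$ convex; since $\xi$ is tight and $T$ is a Heegaard torus, making a compressing disk of $S^0$ convex and applying the Giroux criterion shows that after isotopy $T$ has exactly two dividing curves, of some slope $s$. Then, using that $\xi|_{S^0}$ is a tight contact structure on a solid torus with meridian $0$ and boundary slope $s$, the classification of $\Tight(S^0,s)$ lets us find a convex torus parallel to $T$ inside $S^0$ whose dividing slope is the vertex adjacent to $0$ on the minimal path from $0$ to $s$; replacing $T$ by this torus, we may assume $s$ is itself adjacent to $0$ in the Farey graph. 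Now $\xi|_{S^0}$, having boundary slope adjacent to its meridian, is the unique tight structure on that solid torus, so all the data of $\xi$ is recorded by $\xi|_{S_{-p/q}}$, a tight structure on the solid torus $S_{-p/q}$ with lower meridian $-p/q$ and convex boundary of slope $s$.

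Next I would apply the classification of $\Tight(S_{-p/q},s)$ recalled above: its elements correspond to mostly-upper-decorated minimal paths from $-p/q$ clockwise to $s$, up to shuffling in continued fraction blocks. The key combinatorial point is that, because $s$ is adjacent to $0$ and lies on the minimal path from $-p/q$ to $0$, the minimal path from $-p/q$ to $s$ is precisely the path $P$ of the statement with its last edge deleted — that last edge, $s$--$0$, being exactly the part absorbed by the rigid piece $S^0$. Then "mostly upper decorated" means we assign signs to every edge of $P$ except the first one and this deleted last one, which is exactly the asserted data, and the shuffling equivalence is inherited verbatim. The converse — that each such decorated path arises from a tight $\xi$ — follows by stacking basic slices along $P$ minus its two end edges to build a contact structure on $T^2\times[0,1]$ and filling in the two rigid solid tori; tightness of the result is guaranteed by the tightness criteria for (possibly non-minimal) paths recalled earlier, and non--shuffle--equivalent decorations give non-isotopic $\xi$ by the distinctness half of the $\Tight(S_{-p/q},s)$ classification together with the fact that an isotopy of $L(p,q)$ between two such structures can be normalized to carry one normal-form $T$ to another. (In most cases one can alternatively distinguish them outright by computing the relative Euler class of the complement of a Legendrian core via the formula for the relative Euler class of a $T^2\times[0,1]$ recorded above.)

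The step I expect to be the main obstacle is showing the reduction loses no information: a priori the dividing slope $s$ of $T$, or the minimal path from $0$ to $s$ realized inside $S^0$, might fail to be compatible with $P$ — this is precisely the possibility that $\xi$ carries extra twisting around $T$ (for instance a convex Giroux torsion layer), in which case the complement $L(p,q)\setminus(N_0\cup N_1)$ of standard neighborhoods of the two Heegaard cores would be described only by a non-minimal path and the count would be wrong. Ruling this out amounts to proving that every convex torus parallel to $T$ in a tight $\xi$ has dividing slope lying on $P$; the argument should use Eliashberg's uniqueness of the tight contact structure on the ball together with an analysis, as in \cite{Honda00a}, showing that a "backward" dividing slope, combined with a meridian disk of $S_{-p/q}$ or $S^0$, forces an overtwisted disk. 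This minimal-twisting statement — and its corollary that the cores of the Heegaard tori destabilize all the way down to the rational-unknot contact framing — is the real content beyond the solid-torus and thickened-torus classifications, and it is where I expect the bulk of the work to go.
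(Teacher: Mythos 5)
This statement is quoted in the paper as background, with no proof given beyond the citations to Giroux and Honda, so the comparison is really with their proofs. Your upper-bound half is a reasonable reconstruction of Honda's argument: make the Heegaard torus convex, rule out dividing slopes outside the clockwise arc from $-p/q$ to $0$ (and rule out extra twisting) using the two meridian disks, normalize the inner solid torus to a standard neighborhood of a Legendrian core, and then quote the classification $\Tight(S_{-p/q},s)$; the combinatorial identification of ``mostly upper decorated paths to the vertex adjacent to $0$'' with ``signs on all but the first and last edge of $P$'' is also correct. You even correctly isolate the minimal-twisting normalization as the delicate point of that half.

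The genuine gap is in the lower bound (distinctness). You assert that non--shuffle--equivalent decorations give non-isotopic structures because ``an isotopy of $L(p,q)$ between two such structures can be normalized to carry one normal-form $T$ to another.'' No such normalization is available from convex surface theory: an isotopy of the closed manifold need not respect the Heegaard decomposition, and arranging it to do so while tracking the decorated path is essentially the statement to be proved, not a consequence of the solid-torus classification (whose distinctness half uses the \emph{relative} Euler class, an invariant of the manifold with fixed boundary data, which does not survive gluing up the closed manifold). Your parenthetical fallback also fails: the relative Euler class of the complement of ``a Legendrian core'' is only defined once you have a canonical choice of Legendrian core, which presupposes a classification of Legendrian rational unknots in these tight structures (circular), and the absolute Euler class of $\xi$ does not separate the structures in general --- already on $L(4,1)$ two of the three tight structures have the same Euler class. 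In the actual proofs this step requires a genuinely global input: Honda realizes each decoration by Legendrian surgery on a stabilized unknot and distinguishes the resulting contact structures via the Lisca--Mati\'c theorem on Stein fillings (gauge theory), and Giroux's argument likewise goes beyond cut-and-paste. So your outline establishes the surjection from decorations (up to shuffling) onto tight structures, but not the injectivity, and the missing ingredient cannot be supplied by the background results recalled in this paper.
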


\subsection{Knots in contact manifolds}\label{kot}
Inside any open set containing a Legendrian knot $L$ in a contact manifold $(M,\xi)$ there is a \dfn{standard neighborhood} $N(L)$ of $L$. This is a solid torus on which $\xi$ is tight and $\partial N(K)$ is convex with two dividing curves of slope $\tb(L)$. One may arrange, by a small isotopy of $\partial N(L)$ that the characteristic foliation consists of two lines of singularities called \dfn{Legendrian divides} and curves of slope $s$, for any $s\not=\tb(L)$. These latter curves are called \dfn{ruling curves}. (For most of this subsection we will be considering standard coordinates on a solid torus, so when we say solid torus we mean a solid torus with lower meridian $\infty$ in the terminology of Section~\ref{conttori}.) 

One may reverse the above discussion. Given a solid torus $S$ in a contact manifold $(M,\xi)$ on which $\xi$ is tight and with convex boundary having two dividing curves of slope $n\in\Z$, then there is a unique Legendrian knot $L_S$ with $\tb=n$ having $S$ as its standard neighborhood. 

Given a Legendrian knot $L$, we can stabilize $L$ in two ways, $S_\pm(L)$, and choose a standard neighborhood $N(S_\pm(L))$ of $S_\pm(L)$ in $N(L)$. Notice that $N(L)\setminus N(S_\pm(L))$ is a $\pm$ basic slice with boundary slopes $\tb(L)-1$ and $\tb(L)$. It turns out all the sub-tori with convex boundary having integral slopes can be formed this way. A corollary of classification results in Section~\ref{conttori} that will be useful to us is the following result. 
\begin{corollary}\label{subtori}
Let $S$ be a solid torus and $\xi$ a tight contact structure on $S$ such that $\partial S$ is convex with two dividing curves of slope $n\in\Z$. For each positive integer $k$ there are exactly $k+1$ distinct solid tori $S_0,\ldots, S_{k}$ in $S$ with convex boundary having two dividing curves of slope $n-k$.  They are determined by the contact structure on $S\setminus S_i$ which in turn is determined by a signed minimal path in the Farey graph from $n-k$ to $n$. If $k'$ is an integer larger than $k$ then a solid torus $S'_j$ with convex boundary having two dividing curves of slope $n-k'$ is contained in $S_i$ if and only if the signed path associated to $S\setminus S_i$ is, after possibly shuffling the signs, a sub path of the signed path associated to $S\setminus S'_j$.
\end{corollary}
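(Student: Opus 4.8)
The plan is to derive everything from the classification of tight contact structures on solid tori (Theorem~\ref{thm:continued_block}) together with the non-minimal-path tightness criterion and the discussion of consistent shortenings. First I would use the ``reverse" construction from the start of Section~\ref{kot}: a solid torus $S_i\subset S$ with convex boundary having two dividing curves of slope $n-k$ is determined, up to contact isotopy fixing $\partial S$, by the contact structure on the thickened torus $S\setminus S_i$, which has boundary slopes $n-k$ and $n$. Since $\xi$ is tight on $S$, it is tight on $S\setminus S_i$, and the minimal-twisting hypothesis is automatic here because we are inside a solid torus with meridian $\infty$ and all intermediate slopes lie between $n-k$ and $n$ (which are both far from $\infty$). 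So $S\setminus S_i$ corresponds to a decorated minimal path in the Farey graph from $n-k$ clockwise to $n$. The minimal path from $n-k$ to $n$ (both integers) is the ``staircase" $n-k, n-k+1, \ldots, n-1, n$, which is a single continued fraction block, and Theorem for $T^2\times[0,1]$ says the tight structures on it are classified by sign choices on its $k$ edges up to shuffling — i.e. up to the number of $+$ signs, which can be $0,1,\ldots,k$. That gives exactly $k+1$ such solid tori. Then I would observe that the case $k=1$ recovers the two basic slices $N(L)\setminus N(S_\pm(L))$, and that iterating the stabilization construction realizes all $k+1$ of these (choosing how many of the $k$ successive stabilizations are positive), so these tori are exactly the ones obtained by stabilizing $k$ times.

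For the nesting statement, suppose $k' > k$ and $S'_j$ has dividing slope $n-k'$. If $S'_j\subset S_i$, then $S\setminus S'_j = (S\setminus S_i)\cup(S_i\setminus S'_j)$, and on the gluing torus both pieces induce dividing slope $n-k$, with two dividing curves, so the decorated path for $S\setminus S'_j$ is the concatenation of the path for $S\setminus S_i$ (from $n-k$ to $n$) with the path for $S_i\setminus S'_j$ (from $n-k'$ to $n-k$). Hence the signed path for $S\setminus S_i$ is a terminal subpath of that for $S\setminus S'_j$, so in particular (after possible shuffling) a subpath. Conversely, if the signed path of $S\setminus S_i$ is a shuffle of a subpath of that of $S\setminus S'_j$: the path for $S\setminus S'_j$ is the staircase $n-k', \ldots, n$ with some sign assignment; truncate it at the vertex $n-k$ to split it as a path from $n-k'$ to $n-k$ followed by a path from $n-k$ to $n$; the second piece describes a thickened torus $T^2\times[0,1]$ which, glued onto the appropriate $S'$, reconstructs a solid torus $S_i$ with boundary slope $n-k$ whose complement path is exactly that second piece. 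Since within a single continued fraction block the contact structure depends only on the number of positive signs, up to shuffling this $S_i$ is the one indexed by that number, and $S'_j\subset S_i$. I would spell out that gluing/truncation argument carefully, using that the staircase from $n-k'$ to $n$ has $n-k$ as one of its interior vertices precisely because $k<k'$.

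The main obstacle I anticipate is the bookkeeping around shuffling and the precise sense in which ``subpath" should be interpreted: within a continued fraction block the decorated paths are only defined up to shuffling, so ``subpath" must mean ``subpath after choosing representatives", and one has to check this is well-defined — i.e. that $S'_j\subset S_i$ depends only on the shuffle-equivalence classes, not on representatives. This follows because shuffling corresponds to an actual contact isotopy of $T^2\times[0,1]$ rel boundary (indeed rel the dividing sets), which can be extended over the collapsed solid torus, but making that rigorous is the delicate point. A secondary point to verify is uniqueness: that the $S_0,\ldots,S_k$ are pairwise non-isotopic rel $\partial S$, which again is exactly the statement that their complementary contact structures are non-isotopic, i.e. have distinct numbers of positive basic slices — detectable by the relative Euler class formula $\sum \epsilon_i(r_i\ominus r_{i-1})$ recalled above. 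I would invoke that formula to separate the cases cleanly rather than re-deriving the classification.
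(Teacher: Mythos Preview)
Your proposal is correct and is exactly the approach the paper intends: the paper states this result as an immediate corollary of the classification theorems in Section~\ref{conttori} and gives no separate proof, so what you have written is a faithful fleshing-out of that implicit argument. One small point of phrasing to tighten: your justification that $S\setminus S_i$ is minimally twisting is slightly circular as stated (``all intermediate slopes lie between $n-k$ and $n$'' \emph{is} minimal twisting); the clean reason is that a non-minimally-twisting tight $T^2\times[0,1]$ contains convex tori of every slope, in particular slope $\infty$, and a meridional Legendrian divide on such a torus would bound an overtwisted disk in $S$, contradicting tightness.
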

We also notice that any solid torus with convex boundary has a unique maximal Thurston-Bennequin invariant Legendrian knot in its interior. 
\begin{corollary}\label{lemma:rational}
Let $S$ is a solid torus with convex boundary having dividing slope $r\in\mathbb{Q}\setminus\mathbb{Z}$. Then there exists a unique solid torus with convex boundary having two dividing curves of slope $\lfloor{r}\rfloor$ inside $S$ and that torus contains any convex torus with smaller dividing slope.
\end{corollary}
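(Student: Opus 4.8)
\emph{Proof plan.} Work in the standard coordinates on $S$, so the meridian is $\infty$, and set $n=\lfloor r\rfloor$; since $r\notin\Z$ we have $n<r<n+1$, and (as throughout this section) $S$ is tight. The plan is to identify the required solid torus with the standard neighborhood $N(L)$ of the maximal Thurston--Bennequin invariant Legendrian representative $L$ of the core of $S$; that $L$ exists and is unique is the observation recorded just before the statement.

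The one external fact I would invoke is the standard consequence of Honda's classification \cite{Honda00a}: in a tight solid torus with meridian $\infty$ and convex boundary of dividing slope $\rho$, the dividing slopes realized by convex tori parallel to the boundary are exactly those in the Farey arc $[\infty,\rho]$, i.e.\ the rationals $\le\rho$. Applied to $S$, the integral such slopes are precisely the integers $\le n$. In particular $S$ contains a convex torus $T$ parallel to $\partial S$ of slope $n$; the solid torus it cuts off inside $S$ has meridian $\infty$ and convex boundary of integral slope $n$, so by the classification it carries the unique such tight structure and is the standard neighborhood of a Legendrian core of $S$ with $\tb=n$. Also, no Legendrian representative of the core of $S$ can have $\tb>n$, since a standard neighborhood of such a representative would be bounded by a parallel convex torus of slope $>n$. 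Hence $n$ is the maximal $\tb$ of a core representative, so by the recorded observation there is a unique such Legendrian $L$; then $N(L)$ has convex boundary of slope $\lfloor r\rfloor$ (existence), and any parallel convex torus of slope $n$ cuts off a standard neighborhood of a maximal-$\tb$ core, which must coincide with $L$ up to isotopy (uniqueness).

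For the final assertion I would use that convex tori in $S$ parallel to $\partial S$ are, after isotopy, nested. Let $T'$ be such a torus of dividing slope $s<\lfloor r\rfloor=n$ and let $S_{T'}$ be the solid torus it cuts off; then $S_{T'}$ and $N(L)$ are nested. If $N(L)\subset S_{T'}$, then $S_{T'}$ is a tight solid torus with meridian $\infty$ and convex boundary of slope $s<n$ that contains the parallel convex torus $\partial N(L)$ of slope $n$, contradicting the displayed fact (which forces all such slopes to be $\le s$); so $S_{T'}\subset N(L)$, i.e.\ $T'\subset N(L)$. I expect the delicate points to be invoking the ``realized slopes'' fact with the correct orientation --- the parallel convex tori realize the slopes between the boundary slope and the meridian, lying on the side of the boundary slope \emph{away} from the meridian --- and being precise that ``unique'' means unique up to isotopy in $S$, which is what the argument via the maximal-$\tb$ representative gives.
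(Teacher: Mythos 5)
The paper gives no written proof of this corollary (it is stated, like Corollary~\ref{subtori}, as an unproved consequence of the Giroux--Honda classification of tight solid tori), so your plan is judged on its own terms. The existence and uniqueness parts are fine and in the intended spirit: tightness of $S$ is indeed the implicit hypothesis, the realized-slope fact you quote is exactly the kind of statement the paper itself uses freely (e.g.\ in Section~\ref{sec:ruk}, ``convex tori parallel to the boundary with dividing slope any number clockwise of the meridian and anti-clockwise of the boundary slope''), the slope-$\lfloor r\rfloor$ torus appears as the first vertex after $\infty$ on the minimal path from $\infty$ clockwise to $r$, and identifying it with the standard neighborhood of the (unique) maximal-$\tb$ Legendrian core is the right mechanism for uniqueness up to contact isotopy. (Minor point: you should say you take the slope-$\lfloor r\rfloor$ torus with exactly two dividing curves, since the correspondence with standard neighborhoods requires that.)

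The genuine gap is in the containment step, at the sentence ``convex tori in $S$ parallel to $\partial S$ are, after isotopy, nested.'' As used, this must mean that the given torus $T'$ of slope $s<\lfloor r\rfloor$ and the fixed torus $\partial N(L)$ can be made disjoint by an isotopy that preserves their convexity and dividing slopes, i.e.\ by a contact isotopy. A smooth isotopy does make parallel tori nested, but it destroys the slope data (smoothly isotopic convex tori in $S$ realize many different dividing slopes), while disjointing two given convex surfaces by a contact isotopy is not an off-the-shelf lemma --- it is precisely the kind of statement that this paper handles elsewhere only via Colin's isotopy discretization (cf.\ the proofs of Theorems~\ref{thm:poscable} and~\ref{thm:negcable}), and in the present situation it is essentially equivalent in strength to the containment you are trying to prove. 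The standard repair avoids the issue entirely: given $T'$ of slope $s<\lfloor r\rfloor$, consider the tight $T^2\times[0,1]$ between $T'$ and $\partial S$, with boundary slopes $s$ and $r$. Since this layer sits inside the tight torus $S$ it contains no convex torus of meridional slope $\infty$, so its parallel convex tori realize exactly the slopes in the clockwise arc from $s$ to $r$, which contains $\lfloor r\rfloor$; hence there is a convex torus of slope $\lfloor r\rfloor$ (with two dividing curves) lying outside $T'$, and by your uniqueness argument it is contact isotopic to $\partial N(L)$. This proves that the unique slope-$\lfloor r\rfloor$ torus contains $T'$, with the isotopy ambiguity entering only where it must. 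With that substitution your argument is complete; your nesting-plus-exclusion argument for ruling out $N(L)\subset S_{T'}$ is then unnecessary.
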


We now recall two well-known results about Legendrian knots sitting on a convex torus. 
\begin{lemma}\label{canstablize}
Suppose $L$ sits on a convex torus $T$ and $L'$ is a ruling curve on $T$ in the same homology class as $L$ (or a copy of $T$ obtained by isotoping $T$ through convex tori). Then $L$ is obtained from $L'$ by some number of stabilizations. 
\end{lemma}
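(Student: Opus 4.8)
The plan is to reduce to the local model of a convex torus and its Legendrian divides and ruling curves, and then to exploit the fact that a neighborhood of a Legendrian curve on $T$ is a thickened torus whose contact structure we understand explicitly. First I would set up coordinates so that $T = T^2\times\{0\}$ sits inside a collar $T^2\times(-\epsilon,\epsilon)$ and, after a $C^\infty$-small isotopy, $T$ has a characteristic foliation in \emph{standard form}: two Legendrian divides parallel to the dividing set and a ruling by curves of some slope $r$ (Giroux flexibility). The curve $L$ is Legendrian on $T$, so after a Legendrian isotopy keeping it on $T$ it is either one of the divides or one of the ruling curves; in either case it is a ruling curve (or divide) of the standard form, hence we may as well take $L$ to be a ruling curve in the homology class in question, the same class as $L'$.

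Next I would invoke the standard neighborhood picture. Since $L$ and $L'$ lie on (copies of) the same convex torus and are ruling curves in the same homology class, a neighborhood of $T$ together with the region between the two copies is a tight contact structure on $T^2\times[0,1]$ with convex boundary; both $L$ and $L'$ can be taken to be Legendrian ruling curves on the two boundary components with the same (integral, after a change of basis) slope equal to their Thurston--Bennequin framing. By the classification of tight contact structures on $T^2\times[0,1]$ (the theorems on $\Tight^{min}$ recalled above) and the description of basic slices, such a layer is a stack of basic slices, each of which realizes exactly the passage from a Legendrian knot to one of its two stabilizations (as recorded in Section~\ref{kot}: $N(L)\setminus N(S_\pm(L))$ is a $\pm$ basic slice). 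Peeling off these basic slices one at a time identifies $L$ as the result of applying $S_+$ and $S_-$ to $L'$ some total number of times, with the signs read off from the decorations of the path in the Farey graph; this is exactly the assertion. The case where $L'$ is obtained by isotoping $T$ through convex tori is the same argument applied to the thickened torus swept out by that isotopy.

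The main obstacle — and really the only subtle point — is the bookkeeping that a ruling curve of slope $s$ on a convex torus with dividing slope $n$, once we change basis so that $s$ becomes an integer, genuinely has $\tb$ equal to that integer and that the two boundary ruling curves differ precisely by the number of basic slices in between; this is where one has to be careful that ``ruling curve'' really does behave like a Legendrian knot with a standard neighborhood, i.e. that no overtwisted disk or extra twisting has been introduced in the collar. This is handled by choosing the collar thin enough that the contact structure on it is tight (and minimally twisting), which is automatic for a sufficiently small neighborhood of a convex surface. I expect the rest to be routine given the machinery of Section~\ref{conttori} and the discussion of stabilization in Section~\ref{kot}.
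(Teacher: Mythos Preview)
The paper does not actually prove this lemma; it is listed among ``two well-known results about Legendrian knots sitting on a convex torus'' and left without argument. So there is no paper proof to compare to. That said, your outline has a genuine gap at the very first step.

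The problematic move is your reduction in the first paragraph. You propose to put $T$ into standard form via Giroux flexibility and then assert that ``$L$ is Legendrian on $T$, so after a Legendrian isotopy keeping it on $T$ it is either one of the divides or one of the ruling curves.'' But Giroux flexibility moves $T$ inside an $I$-invariant neighborhood; after that motion $L$ is no longer on $T$, and if you instead carry $L$ along by the ambient isotopy it stays on $T$ but ceases to be Legendrian. There is no mechanism for Legendrian-isotoping $L$ back onto the standard-form $T$, and if there were you would have proved that $L$ is Legendrian isotopic to a ruling curve, i.e.\ to $L'$ with \emph{zero} stabilizations. The entire content of the lemma is the case where $L$ meets $\Gamma_T$ more times than a ruling curve does, so that $\tw(L,T) = -\tfrac12|L\cap\Gamma_T| < \tw(L',T)$ and honest stabilizations appear; your reduction erases exactly this case.

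The second paragraph then inherits the error: once you have declared both $L$ and $L'$ to be ruling curves on copies of $T$ related by an isotopy through convex tori, those copies have the \emph{same} dividing slope, the layer between them is $I$-invariant, and there are no basic slices to peel off --- so your stacking argument produces nothing. A correct argument stays with the original $T$: take a convex annulus $A$ on (a slight thickening of) $T$ cobounded by $L$ and a ruling curve $L'$, note the imbalance $|L\cap\Gamma_T|\ge|L'\cap\Gamma_T|$, and use the resulting boundary-parallel dividing arcs on the $L$-side as bypasses which, inside the tight $I$-invariant neighborhood, exhibit destabilizations of $L$ down to $L'$. Equivalently, show directly that $L$ lies in a standard neighborhood $N(L')$ of the ruling curve and invoke the classification of Legendrians in a tight solid torus.
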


\begin{lemma}\label{isruling}
Suppose $N$ is a standard neighborhood of a Legendrian knot $L$ and $\partial N$ has longitudinal ruling curves of slope larger than the contact framing (that is $\tb(L)$). Then the ruling curves of $\partial N$ are Legendrian isotopic to $L$. 
\end{lemma}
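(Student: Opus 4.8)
The plan is to use the uniqueness of standard neighborhoods together with the classification of tight contact structures on solid tori to identify the ruling curves. Suppose $N$ is a standard neighborhood of $L$, so $\partial N$ is convex with two dividing curves of slope $\tb(L)$, and we are given longitudinal ruling curves on $\partial N$ of some slope $s$ with $s > \tb(L)$ (in the standard coordinates where the meridian of $N$ has slope $\infty$). First I would recall that since $s \neq \tb(L)$, after a small isotopy of $\partial N$ through convex surfaces we may assume its characteristic foliation is in standard form with two Legendrian divides of slope $\tb(L)$ and a one-parameter family of ruling curves of slope $s$; let $L'$ be one of these ruling curves. The goal is to show $L'$ is Legendrian isotopic to $L$.

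The key step is to produce a standard neighborhood of $L'$ that is isotopic, as a contact solid torus, to $N$. Take a slightly larger solid torus $N' \supset N$ obtained by pushing $\partial N$ outward, chosen so that $L' \subset \partial N \subset \interior N'$ and so that $N' \setminus \interior N$ carries the unique (product) tight contact structure making $\partial N'$ convex with dividing slope $\tb(L)$ as well — i.e. $N'$ is still a standard neighborhood of $L$ and $N' \setminus \interior N$ is a trivial collar with no twisting. Inside $N'$, the curve $L'$ sits on the convex torus $\partial N$ as a ruling curve. Now I would carve out a genuine standard neighborhood $N(L')$ of $L'$: because $L'$ is a Legendrian ruling curve of slope $s$ on the convex torus $\partial N$, a thin convex-boundaried solid torus neighborhood of $L'$ inside $N'$ has convex boundary with two dividing curves of slope $\tb(L') = s$ and tight interior, hence is a standard neighborhood $N(L')$ with $\tb(L') = s$. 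The main point is then to analyze $N' \setminus \interior N(L')$: I claim this is a tight, minimally twisting thickened torus from slope $s$ down to slope $\tb(L)$ (the outer boundary $\partial N'$ has dividing slope $\tb(L)$), and moreover it contains $\partial N$ as a convex torus with dividing slope $\tb(L)$. Hence $N' \setminus \interior N$ is the innermost part of this region adjacent to $\partial N'$.

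From here the argument is bookkeeping with the Farey graph and Corollary~\ref{subtori} / Theorem~\ref{thm:continued_block}. Since $s > \tb(L)$, the minimal path in the Farey graph from $s$ clockwise to $\tb(L)$ passes through the integer $\tb(L)$, and the corresponding solid torus $N(L')$ thickened out to dividing slope $\tb(L)$ is exactly a standard neighborhood of \emph{some} Legendrian knot $L''$ with $\tb(L'') = \tb(L)$; by Lemma~\ref{canstablize} (applied on the convex torus $\partial N$), $L'$ is obtained from that maximal-$\tb$ Legendrian core $L''$ by stabilizations, so in particular $L''$ and $L'$ are smoothly isotopic to the core of $N$, namely to $L$. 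But the uniqueness statement — "given a solid torus on which $\xi$ is tight with convex boundary of integral dividing slope $n$, there is a unique Legendrian knot with $\tb = n$ having it as standard neighborhood" (stated just before Corollary~\ref{subtori}) — applied to the solid torus $N$ itself forces $L'' $, together with its standard neighborhood, to be contact-isotopic to $L$ with neighborhood $N$. Therefore $L'$, being a ruling curve on $\partial N$ of slope $> \tb(L)$ and hence a destabilization-chain member above $L$, is Legendrian isotopic to $L$.

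The step I expect to be the main obstacle is the middle one: showing that the complementary region $N' \setminus \interior N(L')$ is tight and minimally twisting with the asserted boundary slopes, i.e. that carving out a thin neighborhood of the ruling curve $L'$ does not destroy tightness and realizes the "expected" path in the Farey graph from $s$ down to $\tb(L)$ through the integer $\tb(L)$. This requires knowing that $L'$, as a ruling curve on the convex torus $\partial N \subset N$ with $N$ tight, has a standard neighborhood whose complement in $N$ is the tight thickened torus predicted by the classification — essentially the content of the "taking a ruling curve and pushing $T$ in slightly gives the inner torus of a basic-slice decomposition" picture — and one must check the slopes line up so that $s > \tb(L)$ is precisely the condition making $\tb(L)$ lie on the minimal Farey path from $s$ to the meridian, so that $N(L')$ can be re-thickened to recover a neighborhood with integral dividing slope $\tb(L)$. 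Once the slopes are confirmed, the rest follows from the classification results and the two lemmas already established.
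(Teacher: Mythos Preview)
The central computation is wrong: you assert $\tb(L')=s$, but in fact $\tb(L')=\tb(L)$. This follows immediately from the paper's Lemma~\ref{lemma:tb} applied to the $(1,s)$--cable sitting on $\partial N$ with dividing slope $\tb(L)$: one gets $\tb(L')=s-|s-\tb(L)|=\tb(L)$ since $s>\tb(L)$. You can also see it structurally: $N'$ is a tight solid torus with meridian $\infty$ and integral boundary slope $\tb(L)$, so the minimal Farey path from $\infty$ to $\tb(L)$ is a single edge and \emph{every} convex torus in $N'$ parallel to the boundary has dividing slope $\tb(L)$. In particular $\partial N(L')$ cannot have slope $s\neq\tb(L)$; your claimed thickened torus ``from slope $s$ down to slope $\tb(L)$'' simply does not exist inside a standard neighborhood.

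Everything downstream is built on this error. With the (incorrect) value $\tb(L')=s>\tb(L)$ your invocation of Lemma~\ref{canstablize} and the ``destabilization--chain'' reasoning would at best show that $L'$ \emph{destabilizes} to something with $\tb=\tb(L)$, which is the opposite of the desired conclusion and in any case contradicts your own final sentence. With the correct value $\tb(L')=\tb(L)$ the argument you want is much shorter and bypasses all of the Farey bookkeeping: $N(L')\subset N'$ are nested tight solid tori with the same integral dividing slope, so $N'\setminus N(L')$ is an $I$--invariant $T^2\times[0,1]$ (again because every parallel convex torus in $N'$ has slope $\tb(L)$), and the resulting contact isotopy of $N'$ carries $N(L')$ onto a standard sub-neighborhood of $L$, hence $L'$ to $L$. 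Equivalently, in the explicit model for $N$ one shrinks the boundary torus through convex tori of fixed dividing slope $\tb(L)$ and ruling slope $s$; the ruling curves trace out the Legendrian isotopy to the core. The paper itself records Lemma~\ref{isruling} as a well-known fact and gives no proof, so this model/shrinking argument is the intended one-line justification.
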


The following lemma was proven as Lemma~7.8 in \cite{EtnyreMinMukherjee22pre}.
\begin{lemma}\label{seestab}
  Let $(T^2\times[0,1], \xi)$ be a $\pm$ basic slice with dividing slopes $s_i$ on $T^2\times\{i\}$. Let $L_0$ be a Legendrian ruling curve of slope $s_1$ on $T^2\times\{0\}$ and $L_1$ a Legendrian divide on $T^2\times \{1\}$. Then $L_0$ is $S_\pm(L_1)$. Moreover, if $L'_0$ is a Legendrian divide on $T^2\times \{0\}$ and $L'_1$ is a ruling curve of slope $s_0$ on $T^2\times\{1\}$, then $L'_1$ is $S_\mp(L'_0)$.

  Let $s$ be a vertex in the Farey graph outside the interval $[s_0,s_1]$ for which there are vertices in the Farey graph in $[s,s_0)$ with an edge to $s_1$.  If $L''_i$ is a ruling curve of slope $s$ on $T^2\times\{i\}$ then $L''_0$ is $S_\pm^k(L''_1)$ where $k=|(s_1 \ominus s_0) \bigcdot s|$. Moreover, there is a similar statement when $s$ is outside of $[s_0,s_1]$ for which there are vertices in the Farey graph in $(s_1,s]$ with an edge to $s_0$, and with the roles of $L''_0$ and $L''_1$ interchanged as in the previous paragraph. 
\end{lemma}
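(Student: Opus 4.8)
The strategy is to study convex annuli inside the basic slice $(T^2\times[0,1],\xi)$ and to extract stabilizations from their dividing sets, using only the classification of tight contact structures on basic slices — in particular the uniqueness of the sign of a basic slice and the absence of Giroux torsion in a basic slice. Recall that, since $T^2\times\{0\}$ and $T^2\times\{1\}$ cobound a basic slice, there is an edge between $s_0$ and $s_1$ in the Farey graph, so $|s_0\bigcdot s_1|=1$; also, the twisting number of a Legendrian ruling curve of slope $u$ on a convex torus with dividing slope $v$, measured with the torus framing, equals $-|u\bigcdot v|$, while a Legendrian divide is disjoint from the dividing set and so has twisting number $0$.

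For the first two statements, let $L_0$ be a ruling curve of slope $s_1$ on $T^2\times\{0\}$ and $L_1$ a Legendrian divide on $T^2\times\{1\}$, and take an annulus $A$ with $\partial A=L_0\cup L_1$; after a perturbation rel boundary $A$ is convex, and after further isotopy we may assume $\Gamma_A$ has no closed components (tightness) and no boundary-parallel arc cutting off a trivial bypass. Then $\Gamma_A$ meets $L_1$ in $0$ points and $L_0$ in $2|s_1\bigcdot s_0|=2$ points, so it is a single boundary-parallel arc based on $L_0$. Peeling off the corresponding bypass exhibits $L_0$ as a single stabilization $S_\varepsilon(M)$ of a Legendrian knot $M$ that cobounds with $L_1$ a product annulus, hence $M$ is Legendrian isotopic to $L_1$ and $L_0=S_\varepsilon(L_1)$. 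The sign $\varepsilon$ is the sign of that boundary-parallel arc, and since attaching the corresponding bypass to $T^2\times\{0\}$ produces a convex torus of slope $s_1$ inside the basic slice, the uniqueness of the sign of a basic slice forces $\varepsilon=\pm$, the sign of $(T^2\times[0,1],\xi)$. The primed statement is the same computation after interchanging the two boundary components: now $\Gamma_A$ meets the divide $L'_0$ in $0$ points and the ruling curve $L'_1$ in $2$ points, so $L'_1=S_\varepsilon(L'_0)$, and the sign conventions for basic slices (reversing the $[0,1]$-direction turns a $\pm$ basic slice into a $\mp$ one) give $\varepsilon=\mp$.

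For the ``moreover'', let $L''_0$ and $L''_1$ be ruling curves of the common slope $s$ on $T^2\times\{0\}$ and $T^2\times\{1\}$, and take a convex annulus $A$ with $\partial A=L''_0\cup L''_1$, again arranged (using tightness) to have no closed dividing curves and no trivial boundary-parallel arcs. Then $\Gamma_A$ has $2|s\bigcdot s_0|$ endpoints on $L''_0$ and $2|s\bigcdot s_1|$ endpoints on $L''_1$; writing $b_0,b_1$ for the numbers of boundary-parallel arcs on the two sides and $t$ for the number of through-arcs, one gets $b_0-b_1=|s\bigcdot s_0|-|s\bigcdot s_1|$. A short Farey-graph computation using $|s_0\bigcdot s_1|=1$ together with the hypothesis — which puts $s$ on the side of $s_0$ away from $s_1$ — shows this difference equals $|(s_1\ominus s_0)\bigcdot s|=:k\ge 0$. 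Attaching any boundary-parallel arc of $\Gamma_A$ to the corresponding boundary torus produces a convex torus inside the basic slice whose dividing slope is a Farey neighbor of $s_0$ (resp.\ $s_1$) on the $s_1$-side (resp.\ $s_0$-side), hence equal to $s_1$ (resp.\ $s_0$); by uniqueness of the sign of the basic slice, every such arc has the sign $\pm$ of the basic slice. Peeling these bypasses off one boundary at a time, and noting that the $b_1$ bypasses on the $T^2\times\{1\}$ side cancel against $b_1$ of the $b_0$ bypasses on the $T^2\times\{0\}$ side (all have sign $\pm$, and stabilization is inverse to destabilization), leaves a net of $k$ stabilizations of sign $\pm$ and a product annulus in between, so $L''_0=S^k_\pm(L''_1)$. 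The final statement follows from the same argument with the two boundary components interchanged, $s$ now lying on the side of $s_1$ away from $s_0$.

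\textbf{Where the difficulty lies.} The endpoint counts and the Farey identity $|s\bigcdot s_0|-|s\bigcdot s_1|=|(s_1\ominus s_0)\bigcdot s|$ are routine. The delicate point throughout is the sign bookkeeping: one must check carefully that every surviving boundary-parallel dividing arc has the sign of the basic slice (so that the bypasses genuinely stabilize, with no destabilizations and no cancellation beyond the forced $b_1$ pairs), that this is compatible with the conventions fixed for $S_\pm$ and for the sign of a basic slice, and — in the primed and final statements — that interchanging the two boundary tori affects the signs exactly as claimed. A cleaner if longer alternative for the ``moreover'' is to induct on $k=|(s_1\ominus s_0)\bigcdot s|$, peeling one boundary-parallel arc of $\Gamma_A$ at a time, with the base case $k=0$ (where $\Gamma_A$ has only through-arcs and $L''_0$ is Legendrian isotopic to $L''_1$) and the first part of the lemma closing the induction.
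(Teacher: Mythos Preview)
The paper does not prove this lemma here; it simply cites Lemma~7.8 of \cite{EtnyreMinMukherjee22pre}. Your convex--annulus approach is the standard one, and your treatment of the first two (unprimed and primed) statements is fine.

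For the ``moreover'' there is a real gap in how you handle possible bypasses on the $L''_1$ side. You assert that attaching a boundary--parallel arc based at $L''_1$ to $T^2\times\{1\}$ from inside produces a convex torus of slope $s_0$, and then you run a cancellation argument pairing the $b_1$ bypasses on that side against $b_1$ of the $b_0$ bypasses on the other. But the bypass attachment lemma does not say the new slope is $s_0$: it says the new slope is the Farey neighbor of $s_1$ \emph{closest to the ruling slope $s$}. The hypothesis ``there is a vertex in $[s,s_0)$ with an edge to $s_1$'' is exactly the statement that this closest neighbor lies strictly in $[s,s_0)$, hence \emph{outside} $[s_0,s_1]$. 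Since any convex torus produced inside a minimally twisting basic slice must have slope in $[s_0,s_1]$, this is a contradiction, and the conclusion is that no such bypass exists: $b_1=0$. That is the actual role of the hypothesis. Once $b_1=0$, your argument finishes cleanly: the $b_0=k$ arcs all sit on the $L''_0$ boundary, each gives a bypass attached from the \emph{positive} side of $T^2\times\{0\}$ (so its sign agrees with that of the basic slice), and peeling them off leaves a product annulus, whence $L''_0=S_\pm^k(L''_1)$.

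Your cancellation argument, besides being unnecessary, is also not correct as stated: a bypass attached from the \emph{negative} side of $T^2\times\{1\}$ bounds a sub--basic--slice whose sign is \emph{opposite} to the bypass sign (this is the sign flip in the primed statement), so if such bypasses existed they would contribute $S_\mp$ rather than $S_\pm$ and would not cancel in the way you describe. The inductive alternative you mention at the end is closer to the right idea, but the direct route is simply to use the hypothesis to force $b_1=0$.
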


We now discuss how to compute the classical invariants of a Legendrian realization of a cable.   Suppose $L_{p,q}$ is a Legendrian realization of the $(p,q)$-cable of $K$ and is contained in $\partial N({K})$, which is convex. Then as was shown in \cite{etnyre2005cabling}, see also  \cite[Theorem~1.7]{EtnyreLaFountainTosun12}, $\tb(L_{(p,q)})$ and $\rot(L_{(p,q))}$ can be computed as follows.
   \begin{lemma}
   	\label{lemma:tb}
	The Thurston-Bennequin invariant can be computed as follows:
   \begin{enumerate}
   \item Suppose $L_{(p,q)}$ is a Legendrian divide and slope $(\Gamma_{\partial{N(\mathcal{K})}})=\frac{q}{p}$. Then $\tb(L_{(p,q})=pq$.
   \item Suppose $L_{(p,q)}$ is a Legendrian ruling curve and slope $(\Gamma_{\partial{N(\mathcal{K})}})=\frac{q'}{p'}$. Then $\tb(L_{(p,q})=pq-|pq'-p'q|$.
   
   \end{enumerate}
   \end{lemma}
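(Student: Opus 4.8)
The plan is to compute $\tb(L_{(p,q)})$ by working on the convex torus $\partial N(\mathcal{K})$ and using the relation between the contact framing, the dividing set, and intersection numbers of curves on $T^2$. The key point is that for a Legendrian curve $L$ sitting on a convex torus $T$, the contact framing of $L$ (measured against a given Seifert framing) differs from the surface framing induced by $T$ by (minus) the number of intersection points of $L$ with the dividing set $\Gamma_T$. Concretely, if $T = \partial N(\mathcal{K})$ has two dividing curves of slope $\sigma := \text{slope}(\Gamma_T)$, then for any Legendrian $L$ on $T$ in the class of the $(p,q)$-cable we have $\tw(L,\text{framing from }T) = -\tfrac12 \#(L \cap \Gamma_T) = -|(p,q)\text{-slope} \bigcdot \sigma|$, where the factor $\tfrac12$ is absorbed because there are two parallel dividing curves.

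First I would pin down the surface framing. Since $N(\mathcal{K})$ is a standard neighborhood, $\mathcal{K}$ is null-homologous and the $(p,q)$-curve on $\partial N(\mathcal{K})$ (with our meridian/longitude slope convention) is a $p$-times-longitude-plus-$q$-times-meridian curve that is smoothly the $(p,q)$-cable of $\mathcal{K}$; the framing it inherits from $T = \partial N(\mathcal{K})$ is exactly $pq$ relative to the Seifert framing of the cable, because pushing off within $T$ meets the cable curve in $pq$ points (this is the standard cabling-framing computation, cf.\ \cite{etnyre2005cabling, EtnyreLaFountainTosun12}). So $\tb(L_{(p,q)}) = pq + \tw(L_{(p,q)}, T)$.

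Now I split into the two cases. In case (1), $L_{(p,q)}$ is a Legendrian divide, i.e.\ it is one of the dividing curves, so its slope equals $\sigma = q/p$ and it is \emph{disjoint} from (actually equal to a component of) $\Gamma_T$; hence $\tw(L_{(p,q)}, T) = 0$ and $\tb(L_{(p,q)}) = pq$. In case (2), $L_{(p,q)}$ is a Legendrian ruling curve of the cable slope $q/p$ while the dividing slope is $\sigma = q'/p'$; a ruling curve of slope $q/p$ meets each of the two dividing curves of slope $q'/p'$ in $|(q/p)\bigcdot(q'/p')| = |pq' - p'q|$ points, so $\#(L_{(p,q)} \cap \Gamma_T) = 2|pq'-p'q|$ and $\tw(L_{(p,q)}, T) = -|pq'-p'q|$, giving $\tb(L_{(p,q)}) = pq - |pq'-p'q|$.

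The main obstacle I expect is bookkeeping the sign and normalization conventions: verifying that the $T$-framing of the cable is $+pq$ (not $-pq$) under the stated $\frac{\text{meridian}}{\text{longitude}}$ slope convention, and correctly handling the factor of two coming from the pair of parallel dividing curves, so that the formula for a ruling curve reduces to the divide formula in the limit $q'/p' \to q/p$ (where $|pq'-p'q|\to 0$). Since this is precisely the content cited from \cite{etnyre2005cabling} and \cite[Theorem~1.7]{EtnyreLaFountainTosun12}, I would either quote it directly or reproduce the short dividing-set computation above, checking the normalization on the easy example of the unknot (where $\mathcal{K}$ has $\tb = \sigma$ an integer and $L_{(p,q)}$ a ruling curve recovers the known cabling formula).
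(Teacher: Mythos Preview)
The paper does not actually prove this lemma; it is stated with a citation to \cite{etnyre2005cabling} (see also \cite[Theorem~1.7]{EtnyreLaFountainTosun12}), so there is no in-paper proof to compare against. Your argument is correct and is exactly the standard one from those references: write $\tb(L_{(p,q)})$ as the torus framing $pq$ plus the twisting $\tw(L_{(p,q)},T)=-\tfrac12\,\#(L_{(p,q)}\cap\Gamma_T)$, and then read off the two cases. One cosmetic point: the phrase ``pushing off within $T$ meets the cable curve in $pq$ points'' is literally false (a parallel push-off on the torus is disjoint from the original curve); what you mean, and what makes the computation go through, is that the \emph{linking number} of the torus push-off with $L_{(p,q)}$ equals $pq$, i.e.\ the torus framing differs from the Seifert framing of the cable by $pq$.
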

   
   \begin{lemma}
   	\label{lemma:rot}
   Let $D$ be a convex meridional disk of $N(K)$ with Legendrian boundary on a contact isotopic copy of the convex surface $\partial N(\mathcal{K})$, and let $\Sigma (L)$ be a convex Seifert surface with Legendrian boundary $L\in \mathcal{L}(\mathcal{K})$ which is contained in a contact isotopic copy of $\partial N(\mathcal{K})$. Then 
   
   \[r(L_{(p,q)})=q\cdot r(\partial D)+p\cdot r(\partial\Sigma(K))\]
   \end{lemma}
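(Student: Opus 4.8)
The plan is to construct an explicit Seifert surface for $L_{(p,q)}$ out of $p$ parallel copies of the Seifert surface $\Sigma(K)$ of $K$ and $q$ parallel copies of the meridional disk $D$, and then to read off $\rot(L_{(p,q)})$ using the fact that the rotation number is a relative Euler number that is additive under the oriented cut-and-paste assembling these pieces. First I would recall the obstruction-theoretic description: for a null-homologous Legendrian knot $J$ with Seifert surface $F$, the number $\rot(J)$ is the obstruction to extending the positively oriented unit tangent field $v_J$ of $J=\partial F$ to a nonvanishing section of $\xi|_F$, i.e. the relative Euler number $\langle e(\xi|_F, v_J), [F,\partial F]\rangle$. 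This is independent of $F$ and, being computed by counting zeros of a generic extension (or by comparing trivializations of $\xi$ over the pieces), it is additive: if $F$ is obtained from surfaces $F_1,\dots,F_m$ with Legendrian boundary by an oriented resolution along bands that are disks, then $\rot(\partial F)=\sum_i \rot(\partial F_i)$, since over the bands the section extends with no obstruction and the trivializations on the pieces glue.

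For the construction, after a contact isotopy I would assume that $L_{(p,q)}$ sits on the convex torus $\partial N(K)$, that the meridional disk $D$ of the statement has Legendrian boundary a meridional ruling curve on $\partial N(K)$, and that $\Sigma(K)$ meets $\partial N(K)$ in a longitudinal ($0$-sloped) ruling curve $\partial\Sigma(K)$; these are exactly the curves whose rotation numbers appear in the statement. Take $p$ disjoint contact-isotopic pushoffs of $\Sigma(K)$ into $M\setminus N(K)$ and $q$ disjoint contact-isotopic pushoffs of $D$ into $N(K)$. Their boundaries give $p$ parallel longitudes and $q$ parallel meridians on $\partial N(K)$ meeting in $pq$ points; resolving these intersections in the oriented way turns them into the $(p,q)$-curve, which I would arrange to be $L_{(p,q)}$, and simultaneously glues the $p+q$ surface pieces along $pq$ bands into a connected oriented surface $\Sigma$ with $\partial\Sigma=L_{(p,q)}$ and $[\Sigma,\partial\Sigma]=p\,[\Sigma(K),\partial\Sigma(K)]+q\,[D,\partial D]$.

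Applying the additivity above to this decomposition gives $\rot(L_{(p,q)})=\rot(\partial\Sigma)=p\cdot\rot(\partial\Sigma(K))+q\cdot\rot(\partial D)$, which is the asserted formula; the orientations are fixed so that $[L_{(p,q)}]=p[\partial\Sigma(K)]+q[\partial D]$ on $\partial N(K)$, which is why both terms carry a plus sign. Alternatively, one can arrange $\Sigma$ to be convex — its dividing set being the oriented resolution of the dividing sets of the $p+q$ pieces, with the resolution bands disjoint from it — and apply the formula $\rot(\partial\Sigma)=\chi(\Sigma_+)-\chi(\Sigma_-)$; the Euler-characteristic bookkeeping across the bands again yields the sum.

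The main obstacle is the local analysis near $\partial N(K)$: one must check that the oriented resolution really can be carried out so that the $p+q$ Legendrian ruling/divide boundary curves become a single Legendrian isotopic to $L_{(p,q)}$, that $\Sigma$ is a bona fide Seifert surface (connected, correctly oriented, with the right boundary), and — most importantly — that the resolution bands contribute nothing to the relative Euler number, so that the count genuinely splits into $p$ copies of $\rot(\partial\Sigma(K))$ and $q$ copies of $\rot(\partial D)$. One should also confirm, for instance via Lemma~\ref{lemma:tb} or a direct Euler-class computation on $D$ and $\Sigma(K)$, that the rotation numbers appearing here are those of the Legendrian meridian and longitude named in the statement rather than of some stabilizations of them. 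Everything else reduces to bookkeeping with the definition of $\rot$.
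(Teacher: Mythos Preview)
The paper does not give its own proof of this lemma: it is stated as background and attributed to \cite{etnyre2005cabling} (see also \cite[Theorem~1.7]{EtnyreLaFountainTosun12}). Your proposal is essentially the argument that appears in the cited source: build a Seifert surface for $L_{(p,q)}$ from $p$ parallel copies of $\Sigma(K)$ and $q$ parallel copies of $D$, banded together along $\partial N(K)$, and use the interpretation of $\rot$ as a relative Euler number to conclude additivity. So your approach is correct and matches the original proof rather than anything new in this paper.

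One small comment on the write-up: the cleanest way to justify that the resolution bands contribute nothing is not to argue that ``over a disk there is no obstruction'' in the abstract, but to observe that all the Legendrian boundary curves in question lie on (contact-isotopic copies of) the single convex torus $\partial N(K)$, so in an $I$-invariant neighborhood there is a global nonvanishing section of $\xi$ (e.g.\ a Legendrian line field in the characteristic foliation of nearby tori) against which all the tangent vectors can be simultaneously compared. This makes the homological additivity $[\Sigma,\partial\Sigma]=p[\Sigma(K),\partial\Sigma(K)]+q[D,\partial D]$ directly yield the formula, and it dispenses with any case-by-case check that the resolved curve is Legendrian isotopic to $L_{(p,q)}$ rather than a stabilization of it.
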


We end this section by discussing neighborhoods of Legendrian knots in non-standard coordinates as discussed in Section~\ref{conttori}. Suppose $S_r$ is a torus with lower meridian $r$ in a contact manifold $(M,\xi)$. If $\xi$ is tight when restricted to $S_r$ and $S_r$ has convex boundary with two dividing curves of slope $s$ such that $s$ and $r$ are connected by an edge in the Farey graph, then $S_r$ is a standard neighborhood of a Legendrian knot $L$ and the contact framing is $s$. If we stabilize $L$ then the resulting knot will have a standard neighborhood with lower meridian $r$ and dividing curves of slope $s'$, where $s'$ is clockwise of $r$, anti-clockwise of $s$, and has an edge in the Farey graph to both $r$ and $s$.

\subsection{Bypasses}
Given a convex surface $\Sigma$ in a contact manifold $(M,\xi)$ a \dfn{bypass} for $\Sigma$ is an embedded disk $D$ such that $\partial D=\alpha_1\cup \alpha_2$ is Legendrian, $\xi$ is tangent to $D$ at $\alpha_1\cap \alpha_2$, $D\cap \Sigma=\alpha_1$, $\alpha_1$ intersects the dividing curves on $\Sigma$ three times $\{x_1,x_2,x_3\}$, with $x_2$ on the interior of $\alpha_1$ and $\partial \alpha_1=\{x_1,x_3\}$, and $\xi$ is not tangent to $D$ along the interior of $\alpha_2$. We can assume that $\xi$ is tangent to $D$ at the $x_i$'s and the sign of the singularity at $x_2$ is the sign of the bypass. Let $\Sigma\times [0,1]$ be a small $[0,1]$-invariant neighborhood of $\Sigma$ so that $\Sigma=\Sigma\times \{0\}$ and part of $D$ intersects the neighborhood. We can push this neighborhood past $D$, still denoted $\Sigma \times [0,1]$, so that $\Sigma\times \{1\}$ is convex and the dividing curves differ from $\Sigma$'s in a prescribed way, see \cite{Honda00a}. We say $\Sigma\times \{1\}$ is obtained from $\Sigma$ by \dfn{attaching the bypass $D$}.  The details of this will not concern us here, but we indicate how attaching bypasses to convex tori changes their dividing curves and how to find bypasses. 

If $T$ is a convex torus and $D$ is a bypass for $T$, then attaching the bypass will affect the dividing cures of $T$ in one of the following ways:
\begin{enumerate}
\item increase the number of dividing curves by two,
\item decrease the number of dividing curves by two, or
\item change the slope of the dividing curves (this can only happen if $T$ had only two dividing curves). 
\end{enumerate}
We elaborate on the last possibility. First recall if $T$ is an oriented surface then there is a positive side and negative side to $T$. The positive side is the side such that if a transverse vector pointing in this direction followed by an oriented basis for $T$ gives an oriented basis for $M$. 
\begin{lemma}
Suppose $T$ is a convex torus with two dividing curves of slope $s$ and ruling curves of slope $r$. If $D$ is a bypass on the positive side of $T$ and we attach $D$ to $T$ to get the convex torus $T'$. Then $T'$ will have two dividing curves of slope $s'$ where $s'$ clockwise of $s$, anti-clockwise of $r$ and is the point in the Farey graph closest to $r$ with an edge to $s$. The region between $T$ and $T'$ is a basic slice and the sign of the basic slice agrees with the sign of the bypass. 

If $D$ is on the negative side of $T$ and we attach $D$ to $T$ to get the convex torus $T'$. Then $T'$ will have two dividing curves of slope $s'$ where $s'$ anti-clockwise of $s$, clockwise of $r$ and is the point in the Farey graph closest to $r$ with an edge to $s$. The region between $T$ and $T'$ is a basic slice and the sign of the basic slice is the opposite of the sign of the bypass. 
\end{lemma}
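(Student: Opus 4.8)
This lemma is essentially Honda's bypass-attachment computation for convex tori \cite{Honda00a} rephrased in the language of the Farey graph, so the plan is to reduce to that local computation and then read off the basic-slice data from the classification of minimally twisting tight structures on $T^2\times[0,1]$ recalled above. Recall from the setup preceding the lemma that $T'=\Sigma\times\{1\}$ is produced by taking an $I$-invariant collar $\Sigma\times[0,1]$ of $T=\Sigma\times\{0\}$ and pushing it across a neighborhood of the bypass disk $D$; in particular the change in the dividing set is entirely local, governed by the attaching arc $\alpha_1$ and the sign of the contact-plane tangency at its middle point $x_2$. So the first step is: after a change of basis for $H_1(T^2)$ (which only rotates the Farey graph) arrange $s$ and the ruling slope $r$ conveniently, take $\alpha_1$ to be carried by the slope-$r$ ruling of $T$ and to meet $\Gamma_T$ minimally in its three points $x_1,x_2,x_3$, and apply Honda's local model \cite[Section~3.4.3]{Honda00a}. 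Tracing these three points through the attachment shows that $T'$ again has exactly two dividing curves and that their slope $s'$ is the vertex of the Farey graph adjacent to $s$ that lies on the arc from $s$ toward $r$ and is closest to $r$; the choice of whether that arc is traversed clockwise or anticlockwise is precisely the choice of which side of the cooriented torus $T$ the disk $D$ sits on, which gives the two cases in the statement.

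The second step is to recognize the region $N$ between $T$ and $T'$ as a basic slice. By construction $N\cong T^2\times[0,1]$ has convex boundary with two dividing curves of slope $s$ on one component and two of slope $s'$ on the other; the standard model of a bypass layer shows $N$ is tight and minimally twisting, so $N\in\Tight^{min}(T^2\times[0,1];s,s')$. By the previous step $s$ and $s'$ share an edge of the Farey graph, so by the classification recalled above there are exactly two such $N$, the positive and negative basic slices, distinguished by the relative Euler class, which is Poincar\'e dual to $\epsilon(s'\ominus s)$ with $\epsilon=\pm$.

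It then remains to pin down $\epsilon$, and this is where I expect the real work to be. The sign of the bypass $D$ is by definition the sign of the tangency at $x_2$, and in Honda's local model this is exactly the sign appearing in the relative Euler class of $N$ when $N$ is cooriented so that the transverse direction used to define that Euler class agrees with the ambient orientation, i.e.\ when $D$ lies on the positive side of $T$; in that case $\epsilon$ equals the sign of the bypass. If $D$ lies on the negative side, the transverse direction reverses, negating $\epsilon$ and hence flipping the sign of the basic slice, which gives the ``opposite'' in the second case. The main obstacle is precisely this bookkeeping together with the clockwise/anticlockwise dichotomy of the first step: one must track simultaneously the orientation of $T$ and its positive side, the sign convention for bypasses, and the sign convention for basic slices via the relative Euler class formula recalled above, and check that they are mutually consistent; everything else is the standard local bypass picture and the already-quoted classification theorems. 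A minor additional point to verify is that the bypass is non-trivial, i.e.\ that it changes the dividing slope rather than the number of dividing curves — this is where the hypothesis that $\alpha_1$ is carried by a slope-$r$ ruling and meets $\Gamma_T$ minimally is used.
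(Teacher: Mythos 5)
This lemma appears in the paper only as recalled background on bypass attachments, with no proof given beyond the implicit appeal to Honda's classification work \cite{Honda00a}; your argument — reducing to Honda's local bypass-attachment model to get the new dividing slope $s'$ as the Farey vertex closest to $r$ with an edge to $s$, recognizing the layer between $T$ and $T'$ as a basic slice via the classification of minimally twisting tight structures on $T^2\times[0,1]$, and fixing its sign through the relative Euler class together with the coorientation of $T$ — is precisely the standard justification the paper is relying on. So the proposal is correct and takes essentially the same approach as the paper.
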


A common way to find bypasses is the imbalance principle. Suppose $A$ is a convex annulus with boundary Legendrian and one component in the convex surface $\Sigma$ and the other in the convex surface $\Sigma'$. If $A$ intersects the dividing set $\Gamma_\Sigma$ more times that it intersects the dividing set $\Gamma_{\Sigma'}$, then one may isotopy $A$ so that there is a bypass for $\Sigma$ on $A$.

\subsection{Rationally null-homologous knots}\label{rational}
The main reference for the material in this section is \cite{BakerEtnyre12} and the reader is refereed there for details. 
Recall a knot $K$ in a manifold $M$ is called \dfn{rationally null-homologous} if it is trivial in $H_1(M,\Q)$. This means that there will be some minimal integer $r$ such that $rK$ is trivial in $H_1(M,\Z)$. We call $r$ the order of $K$. Let $N(K)$ be a tubular neighborhood of $K$ and $M_K=M\setminus N(K)$ be the exterior of the neighborhood. It is not hard to see that, given some framing on $\partial N(K)$, there is some $(r,s)$-curve on $\partial N(K)$ that bounds an embedded surface $\Sigma'$ in $M_K$. One may extend $\Sigma'$ in $N(K)$ by ``coning" $\partial \Sigma'$. This gives a surface $\Sigma$ in $M$ whose interior is embedded but its boundary wraps $r$ times around $K$. We call $\Sigma$ a \dfn{rational Seifert surface} for $K$. 
We note that $\Sigma$ might not have connected boundary if $r$ and $s$ are not relatively prime, but we will not consider this case as all our examples below will have $\partial \Sigma$ connected and we assume that throughout the rest of our discussion here. We assume that $K$ is oriented and this will induce an orientation on $\Sigma$. 

If $K'$ is another oriented knot in $M$ that is disjoint from $K$ then we define the \dfn{rational linking number} to be
\[
\lk_\Q(K,K')=\frac 1r \Sigma\cdot K'
\]
where $\Sigma\cdot K'$ is the algebraic intersection between $\Sigma$ and $K'$. There is some ambiguity in this definition, see Section~1 of \cite{BakerEtnyre12}, but this will not be an issue for the situations considered here. 

Now suppose that $K$ is a Legendrian knot in $(M,\xi)$. As discussed in Section~\ref{kot} we know that $K$ has a standard neighborhood $N(K)$ with convex boundary having two dividing curves determined by the contact framing. Let $K'$ be one of the Legendrian divides on $\partial N(K)$. We define the \dfn{rational Thurston-Bennequin invariant} of $K$ to be 
\[
\tb_\Q(K)=\lk_\Q(K,K').
\]
To define the rational rotation number we first note that we have an immersion $i:\Sigma\to M$ that is an embedding on the interior of $\Sigma$ and an $r$-to-$1$ mapping on $\partial \Sigma$. (Notice that we are slightly abusing notation thinking of $\Sigma$ as an abstract surface and also as its image under $i$, we hope the meaning to be clear from context.) We can now consider $i^*\xi$ as an oriented $\R^2$-bundle over $\Sigma$. Since $\Sigma$ is a surface with boundary, we know $i^*\xi$ can be trivialized: $i^*\xi=\Sigma\times \R^2$. Let $v$ be a non-zero vector field tangent to $\partial \Sigma$ inducing the orientation on $K$. Using the trivialization of $i^*\xi$ we can think of $v$ as a map from $\partial \Sigma$ to $\R^2$. We can now define the \dfn{rational rotation} of $K$ to be
\[
\rot_\Q(K)=\frac 1r \text{winding}(v,\R^2).
\]
Notice that $\text{winding}(v,\R^2)$ is equivalent to the obstruction to extending $v$ to a non-zero vector field over $i^*\xi$ and thus can be interpreted as a relative Euler number. 

\section{The existence of non-loose knots}

In this section, we prove our results concerning when a knot type admits a non-loose Legendrian and transverse representative and the number of such representatives. 

\subsection{Determining which knots admit non-loose representatives} We begin by proving Theorem~\ref{characterize} that gives a necessary and sufficient condition for a knot type $K$ in a $3$-manifold $M$ admitting a non-loose Legendrian representative in some overtwisted contact structure. To do so, we first need the following lemmas.

\begin{lemma}\label{lem:s1s2}
  The knot $S^1\times \{pt\}$ in $S^1 \times S^2$ does not admit non-loose Legendrian and transverse representatives in any overtwisted contact structure on $S^1 \times S^2$.
\end{lemma}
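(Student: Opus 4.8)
The plan is to show that the complement of $S^1\times\{pt\}$ in $S^1\times S^2$ is a solid torus, and that any contact structure on this solid torus that could appear as the complement of a knot in an \emph{overtwisted} $S^1\times S^2$ must itself be overtwisted; hence the knot is always loose. First I would observe that $S^1\times S^2 \setminus N(S^1\times\{pt\})$ is diffeomorphic to $S^1\times D^2$, since removing a neighborhood of the $S^1$-fiber-over-a-point from $S^1\times S^2$ leaves $S^1\times(S^2\setminus \text{disk}) = S^1\times D^2$. So a Legendrian (or transverse) representative $L$ of this knot is non-loose precisely when the restriction of $\xi$ to this complementary solid torus $S$ is tight.

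Next I would analyze which tight contact structures can occur on $S$ and what manifold results from gluing back a tight (standard) solid torus neighborhood of $L$. The key point is a gluing/classification argument: if $\xi|_S$ is tight, then $S$ has convex boundary with dividing curves of some slope $s$ (in coordinates where the meridian of $S$ corresponds to the $S^2$-direction), and by the classification of tight contact structures on solid tori (Theorem~\ref{thm:continued_block}) together with Eliashberg's classification of tight contact structures on $S^1\times S^2$, re-gluing a standard neighborhood of a Legendrian knot to a \emph{tight} solid torus $S$ with the meridional gluing that produces $S^1\times S^2$ always yields the \emph{unique tight} contact structure on $S^1\times S^2$, not an overtwisted one. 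In other words, there is no way to build an overtwisted contact structure on $S^1\times S^2$ by gluing a tight neighborhood of $L$ to a tight complement: any overtwisted structure on $S^1\times S^2$ must have its overtwisted disk disjoint from some neighborhood of any representative of $S^1\times\{pt\}$, i.e.\ the complement is overtwisted, so $L$ is loose. I would likely phrase this via the Farey-graph bookkeeping: the path from the meridian of $N(L)$ to the meridian of $S$ that describes the glued-up contact structure on $S^1\times S^2$ must be consistently shortenable to a shortest path for the result to be tight, and tightness of both pieces forces the total to be tight; conversely overtwistedness of the total forces overtwistedness of one piece, and it cannot be the standard neighborhood $N(L)$, so it is $S$.

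For the transverse case, I would deduce it from the Legendrian case: a non-loose transverse representative would admit a Legendrian approximation whose complement (minus a bypass layer, which only shrinks the tight region) is still tight, contradicting what was just shown — or more directly, if the transverse knot's complement were tight, then so would be the complement of any of its Legendrian approximations, again impossible.

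The main obstacle I expect is the gluing step: making precise the claim that gluing a tight neighborhood of $L$ to a tight solid torus complement \emph{cannot} produce an overtwisted $S^1\times S^2$. This requires either invoking Eliashberg's uniqueness of the tight contact structure on $S^1\times S^2$ together with a careful count of how many tight contact structures arise on solid tori with the relevant boundary slopes (and checking they all glue to the tight $S^1\times S^2$), or a direct argument that the only contact structure on $S^1\times S^2$ with a tight complement of $S^1\times\{pt\}$ is the standard tight one. The convexity/edge-rounding conventions and the choice of coordinates identifying the meridian of the complementary solid torus have to be handled with care, since an off-by-one in the Farey graph would change the conclusion.
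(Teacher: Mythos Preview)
Your approach is the paper's, but you have missed the observation that dissolves your ``main obstacle'' into one line. In the Heegaard splitting $S^1\times S^2=(S^1\times D^2)\cup_\partial(S^1\times D^2)$ the gluing takes meridian to meridian, so if $N$ is a standard neighborhood of $L$ with two dividing curves of integral slope $n$ (meridian $\infty$), the complementary solid torus $N^c$ has the \emph{same} meridian $\infty$, and hence its dividing curves of slope $n$ are \emph{longitudinal}. There is then a \emph{unique} tight contact structure on $N^c$ with this boundary data (the path from $\infty$ to $n$ in the Farey graph is a single edge), just as there is on $N$; since the standard tight structure on $S^1\times S^2$ restricts tightly to both halves, the glued-up contact structure is forced to be that one, contradicting overtwistedness. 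No Farey bookkeeping or case count is needed.

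One caution: your sentence ``tightness of both pieces forces the total to be tight'' is false for general torus gluings; it happens to hold here only because uniqueness on each side pins down the global structure. Your transverse reduction via Legendrian approximation is exactly what the paper does (citing \cite{Etnyre13}).
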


\begin{proof}
  Suppose that $L$ is a Legendrian representative of $S^1 \times \{pt\}$ in an overtwisted contact structure on $S^1 \times S^2$ and fix a framing of $L$. Then it has a standard neighborhood $N$ with two dividing curves of slope $n \in \mathbb{Z}$ with respect to the given framing of $L$. Then there is a unique tight contact structure on $N$ and $N^c = S^1\times S^2 \setminus N$ is a solid torus with two longitudinal dividing curves. If $L$ is non-loose, then the contact structure on $N^c$ should be tight and there is also a unique tight contact structure on $N^c$ with longitudinal dividing curves and hence $N\cup N^c$ is the unique tight structure on $S^1\times S^2$, which contradicts that we assumed the contact structure is overtwisted. Therefore, there are no non-loose Legendrian representatives of $S^1\times \{pt\}$ in $S^1\times S^2$. 

  Since a Legendrian approximation of a non-loose transverse knot is non-loose (see \cite[Proposition~1.2]{Etnyre13}, there are also no non-loose transverse representative of $S^1\times \{pt\}$ in $S^1\times S^2$.
\end{proof}

\begin{lemma}\label{lem:connectsum}
  Let $M$, $M'$ be oriented $3$-manifolds. If $K$ is a knot in $M$ which does not admit non-loose Legendrian ({\it resp.} transverse) representatives in any overtwisted contact structure on $M$, then $K$ also does not admit non-loose Legendrian ({\it resp.} transverse) representatives in any overtwisted contact structure on $M \# M'$. 
\end{lemma}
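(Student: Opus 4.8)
The plan is to argue by contradiction: suppose $L$ is a non-loose Legendrian representative of $K$ in some overtwisted contact structure $\xi$ on $M \# M''$ (I write $M''$ for the second summand to avoid clashing with the hypothesis notation). The key observation is that the connect-sum sphere $S$ can be isotoped to intersect $L$ in at most two points, but in fact we may arrange it to be disjoint from $L$: since $K$ lies in the $M$-summand (more precisely, we are gluing $M$ to a copy of $M''$ along balls, and $L$ represents a knot type supported in $M$), a generic position argument lets us push the separating $S^2$ off of $L$. Then $S$ sits inside the complement $(M \# M'') \setminus L$. Because $L$ is non-loose, $\xi$ restricted to this complement is tight, hence $S$ bounds a ball on at least one side; I would take the side not containing $L$, which is a once-punctured $M''$, and note the restriction of $\xi$ there is a tight contact structure on $M'' \setminus B^3$. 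One must be a little careful: the sphere $S$ need not be \emph{convex} a priori, but after a $C^0$-small perturbation it is convex with a single dividing circle (a sphere in a tight manifold has tight complement on both sides, forcing one dividing curve), and cutting along it and filling with a Darboux ball produces a tight contact structure on all of $M''$.

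The next step is to reconstruct a non-loose representative of $K$ in an overtwisted contact structure on $M$ itself, contradicting the hypothesis. Removing the once-punctured $M''$ piece from $(M \# M'', \xi)$ and filling the resulting $S^2$ boundary with a standard tight ball $B^3$ (the unique tight contact structure on $B^3$ with a given convex boundary, by Eliashberg), we obtain a contact structure $\xi_M$ on $M$ still containing $L$ with the same complement $M \setminus L = (M \# M'') \setminus (L \cup \text{punctured } M'')$ with the tight piece reglued --- so $\xi_M$ restricted to $M \setminus L$ is still tight, i.e. $L$ is non-loose in $(M, \xi_M)$. It remains to see $\xi_M$ is overtwisted: here I would use that an overtwisted disk in $(M \# M'', \xi)$, being compact and disjoint from $L$ up to isotopy (it lies in the tight $M''$-side only if that side were... no), must actually lie on the $M$-side --- because the $M''$-side is tight, every overtwisted disk can be isotoped into the complement of the punctured $M''$, hence survives into $(M, \xi_M)$. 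So $(M, \xi_M)$ is overtwisted and $L$ is non-loose in it, contradiction. The transverse case follows from the Legendrian case via Legendrian approximation exactly as in the proof of Lemma~\ref{lem:s1s2}: a non-loose transverse knot has a non-loose Legendrian approximation by \cite[Proposition~1.2]{Etnyre13}, so non-existence of non-loose Legendrian representatives forces non-existence of non-loose transverse ones.

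The main obstacle I expect is the bookkeeping around the separating sphere: making it convex, controlling its dividing set (one dividing curve), and justifying that the overtwisted disk can be pushed entirely to the $M$-side. The point that the $M''$-side is tight is what makes this work --- one uses that a tight contact structure on a punctured $M''$ with standard convex sphere boundary caps off uniquely to a tight structure, and conversely that the overtwisting is ``concentrated'' on the $M$-side. A clean way to package this is to invoke the standard fact (Colin's connect-sum / gluing results, or a direct convex-surface argument) that a contact connected sum $(M,\xi_M) \# (M'', \xi_{M''})$ is overtwisted iff at least one summand is, and that cutting an overtwisted $(M \# M'', \xi)$ along a convex separating sphere yields a decomposition into contact summands; then non-looseness of $L$ forces the $M''$-summand to be tight and the $M$-summand to carry both the overtwisted disk and $L$ with tight complement. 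I would present the argument in this decomposition language to keep it short.
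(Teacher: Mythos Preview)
Your proposal is correct and follows essentially the same route as the paper: decompose along the connect-sum sphere and use Colin's theorem that a contact connected sum is tight if and only if both summands are, to conclude that the $M'$-summand is tight while the $M$-summand is overtwisted and still contains $L$ with tight complement. The paper simply invokes Colin's result immediately on $(M\# M')\setminus N(L)=(M\setminus N(L))\# M'$ rather than working through the convex-sphere cut-and-cap picture by hand, and it handles the transverse case by repeating the argument verbatim rather than reducing to the Legendrian case via approximation as you do; both differences are cosmetic.
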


\begin{proof}
  Suppose $L$ is a non-loose Legendrian representative of $K$ in some overtwisted contact structure on $M \# M'$. Let $N(L)$ be a standard neighborhood of $L$. Then $M \# M' \setminus N(L)$ is tight. Colin \cite{Colin97} showed that the connect sum of two contact structures is tight if and only if each of the summand contact structures is tight. Thus $M \setminus N(L)$ and $M'$ are both tight. Since $M \# M'$ is overtwisted, $M$ must be overtwisted. This contradicts that there are no non-loose Legendrian representatives of $K$ in $M$. 

  The same argument works for non-loose transverse representatives. 
\end{proof}

Now we are ready to prove Theorem~\ref{characterize}.

\begin{proof}[Proof of Theorem~\ref{characterize}]
  We first consider non-loose Legendrian representatives. Assume that $K$ intersects an essential sphere once transversely. This implies that $M = S^1\times S^2 \# N$ and $K$ is the core of the $S^1 \times S^2$ summand. By Lemma~\ref{lem:s1s2}, the core of $S^1 \times S^2$ does not admit non-loose Legendrian representatives in any overtwisted contact structure, so $K$ also does not admit non-loose Legendrian representatives in any overtwisted contact structure on $M$ by Lemma~\ref{lem:connectsum}. Also, let $M = M' \# M''$ where $K \subset M''$ and $M'' \setminus K$ is irreducible. Assume $M'$ does not admit a tight contact structure. Suppose $L$ is a non-loose Legendrian representative of $K$ in $M$. Then $M \setminus N(L)$ is tight. Colin \cite{Colin97} showed that the connect sum of two contact structures is tight if and only if each of the summand contact structures is tight. Thus both $M'$ and $M'' \setminus N(L)$ should be tight. This contradicts the assumption that $M'$ does not admit a tight contact structure, so $K$ does not admit non-loose Legendrian representatives in $M$. 

  Now suppose $K$ does not intersect an essential sphere transversely once and $M'$ admits a tight contact structure. The first condition implies that $K$ is not the core of $S^1 \times S^2$ summand. We claim that $K$ admits a non-loose Legendrian representative in at least two overtwisted contact structures on $M''$ (for the unknot in $S^3$, at least one). There are two cases we need to consider. If the boundary of $M'' \setminus N(K)$ is compressible, then  $M''$ is a lens space (or $S^3$) and $K$ is a rational unknot. Corollary~\ref{courseRU} tells us that if $K$ is not the unknot in $S^3$ then it admits non-loose Legendrian representatives in at least two distinct overtwisted contact structures. For the unknot in $S^3$, it is known that there is a unique overtwisted contact structure on $S^3$ in which $K$ has non-loose representatives, see \cite{EliashbergFraser09}. 
  
  If the boundary of $M'' \setminus N(K)$ is incompressible, we can now apply the proof of \cite[Theorem~1.8]{Etnyre13} to construct two overtwisted contact structures on $M''$ in which $K$ admits infinitely many distinct non-loose transverse representatives. (There is a mistake in the proof of \cite[Theorem~1.8]{Etnyre13}. In \cite{Etnyre13}, it was claimed that for any $K$, which is not the unknot in $S^3$, $M\setminus N(K)$ is irreducible when $M$ is irreducible, but that is clearly not the case if $K$ is contained in a ball and $M$ is not $S^3$, but easily proven otherwise.) According to \cite[Proposition~1.2]{Etnyre13}, any Legendrian approximation of these transverse representatives is a non-loose Legendrian representative of $K$.  Now since $M'$ admits a tight contact structure, we apply Colin's result \cite{Colin97} again and $M \setminus N(L) = M' \# (M'' \setminus N(L))$ is still tight. Thus $K$ admits a non-loose Legendrian representatives at least two overtwisted contact structure on $M$ (when $K$ is the unknot in $S^3$, at least one).
 
  For non-loose transverse representatives, the same argument works except for the fact that rational unknots do not admit non-loose transverse representatives in any overtwisted contact structure on a lens space, see Corollary~\ref{courseRU}
\end{proof}

Corollary~\ref{characterize-irreducible} is immediate from Theorem~\ref{characterize}.

We now turn to the proof of Corollary~\ref{maincor} concerning when a knot in a Seifert fibered space admits a non-loose representative. 
\begin{proof}[Proof of Corollary~\ref{maincor}]
The only reducible Seifert fibered spaces are $S^1\times S^2$ and $\R P^3\# \R P^3$ (which is an $S^1$-bundle over $\R P^2$) \cite[Proposition~1.12]{Hatcher}. In \cite{LiscaStipsicz09}, Lisca and Stipsicz showed that the only Seifert fibered spaces that do not admit tight contact structures are the manifolds $M_n$ obtained by $2n-1$ surgery on the $(2,2n+1)$--torus knot. Thus the corollary follows form Corollary~\ref{characterize-irreducible} except for $S^1 \times S^2$ and $\R P^3 \# \R P^3$.

According to Theorem~\ref{characterize}, the knot $S^1\times \{pt\}$ is the only knot type in $S^1\times S^2$ that has no non-loose Legendrian representatives. 

Similarly, Combining the fact that $\R P^3$ admits a tight contact structure with Theorem~\ref{characterize} tells us that any knot type $K$ in $\R P^3\# \R P^3$ will always have non-loose Legendrian representatives. 
\end{proof}

\subsection{Finding infinitely many non-loose representatives of a knot}

 To prove Theorem~\ref{allhaveinfinity} we begin with a construction. Given a knot $K$ in an irreducible $3$--manifold $M$ that admits non-loose representatives (and is not a rational unknot), let $C$ be the complement of a tubular neighborhood of $K$. As discussed in the previous section, $C$ in irreducible and has incompressible boundary. From Theorem~6.1 of \cite{HondaKazezMatic00} $C$ admits a universally tight contact structure $\overline\xi$ with convex boundary having two meridional dividing curves. One can now glue $T^2\times [0,1]$ to $C$ and extend the contact structure to $\xi''_i$ so that $\xi''_i$ on $T^2\times [0,1]$ is a convex Giroux torsion $i$. As proven in Proposition~4.2 of \cite{HondaKazezMatic02} for the appropriate extension of $\overline{\xi}$ (recall there are two), $\xi''_i$ is universally tight. It was further shown in the proof of Proposition~4.6 of that paper, that the convex Giroux torsion of $\xi''_i$ is  one larger than the convex Giroux torsion of $\xi''_{i-1}$, and thus each $i$ gives a distinct contact structure $\xi''_i$. One may similarly add (the appropriate) half convex Giroux torsion to $\xi''_i$ to get a contact structure $\xi''_{i+\frac 12}$ and as above, for different $i$ all of these contact structures will be distinct. Notice that all these manifolds are canonically diffeomorphic to $C$, so we will consider all the contact structures as living on $C$. 

We can now glue a solid torus $S$ to $(C,\xi''_i)$ for any integer or half-integer $i$ to get an overtwisted contact structure as follows. We first extend $\xi''_i$ by adding a basic slice $(T^2\times[0,1], \xi')$ with dividing slopes $\infty$ and $n\in \Z$. There are two choices and as argued in Section~4 of \cite{HondaKazezMatic02} with one of these choice the resulting contact structure is tight, denote it $\xi'_{i,n}$. Finally glue a solid torus with its unique tight contact structure having convex boundary with dividing slope $n$. The resulting manifold is $M$ and the resulting contact structures will be denoted $\xi'_i$. 

We are now ready to prove Theorem~\ref{allhaveinfinity} saying that all knots that admit non-loose Legendrian representatives (other than a rational unknot) will admit infinitely many such representatives with a given framing in at least two overtwisted contact structures and when the knot is null-homologous we can refine the statement.
\begin{proof}[Proof of Theorem~\ref{allhaveinfinity}]
Notice that $\xi'_i$ is obtained from $\xi'_{i-1}$ by adding a Lutz twist. Thus all the contact structures $\{\xi'_i\}_{i\in\N}$ are the same, \cite{Geiges08}. Denote that contact structure $\xi$. Similarly all the contact structures $\{\xi'_i\}_{i\in \N+\frac 12}$ are the same and it will be denoted by $\xi'$. Notice that in $\xi'_i$ we can find a solid torus $S_n$ in $S$ with convex boundary having two dividing curves of slope $n$ (here we fix an arbitrary framing on $K$ to identify the framings on $K$ with integers). From Section~\ref{kot}, we know $S_n$ is the standard neighborhood of a Legendrian knot $L^{n,i}$. Moreover the complement of a standard neighborhood of $L^{n,i}$ is $\xi'_{i,n}$ on $C$. Thus for different $i$, the complements of the $L^{n,i}$ are different. Now notice that $\xi'$ is obtained from $\xi$ by a half Lutz twist on the transverse push-off of $L^{n,i}$. This is known to change the $d_3$ invariant of the contact structure by the self-linking number of the transverse knot \cite[Proof of Theorem~4.3.1]{Geiges08}. Since the $d_3$ invariant is always an integer or an integer modulo an even number, we see that this will change the parity of the $d_3$ invariant. Hence $\xi$ and $\xi'$ must be non-homotopic contact structures. This concludes the first part of Theorem~\ref{allhaveinfinity}.

We now turn to the second part of the theorem that refines the above result when $K$ is null-homologous. We start with the contact structure $\xi$. Notice that we could arrange for the basic slice attached to $C$ in the construction of $\xi'_i$ to be positive (if it was not just reverse the orientation on all the contact structures involved in the construction). Given this, notice that we can choose the solid tori $S_n$ above so that $S_n\subset S_{n+1}$ and $\overline{S_{n+1}- S_n}$ is a positive basic slice for all $n$. Now since $S_n$ is a standard neighborhood of $L^{n,i}$ we see that $S_-(L^{n,i})$ is $L^{i,n-1}$. Moreover, $S_+(L^{n,i})$ is loose since the complement of a standard neighborhood of $S_+(L^{n,i})$ is obtained from the complement of a standard neighborhood of $L^{n-1}$ by attaching a negative basic slice $B$. Thus the contact structure is overtwisted since we can find a positive basic slice $B'$ in it that shares a boundary with $B$ and has dividing slopes $n$ and $\infty$ (that is the contact structure on $B\cup B'$ is given by a path in the Farey graph that goes from $n-1$ to $n$ to $\infty$ and the first edge has a $+$ and the second has a $-$, so we can shorten the path with a mismatch of signs and so yields an overtwisted contact structure). 

From construction it is clear that $\tb(L^{n,i})=n$. To compute the rotation number recall that if $n<0$, then \cite{Kanda98} says it is simply given by $\chi(\Sigma_+)-\chi(\Sigma_-)$ where $\Sigma$ is a convex Seifert surface for $K$ with Legendrian boundary on the boundary of a standard neighborhood of $L^{n,i}$ and $\Sigma_\pm$ are the $\pm$ regions of the complement of the dividing curves. In the construction of the $\xi''_i$ given in \cite[Section~4]{HondaKazezMatic02} we can see that for $n>0$ there are an even number of boundary parallel circles in the dividing set and $n$ arcs that are boundary parallel. With our choices, there will be a subsurface of $\Sigma$ that is diffeomorphic to $\Sigma$ in $\Sigma_+$ and the $n$ arcs in the dividing set bounds disks in $\Sigma_-$. Thus we see that $\rot(L^{n,i})=\chi(\Sigma)-n$ when $n<0$, but as we know the $L^{n,i}$ for $n\geq 0$ negatively stabilize to $L^{n',i}$ for $n'<0$  we see the formula holds for all $n$.  If we relabel $L^{n,i}$ as $L_-^{n,i}$ then we have constructed the claimed knots in Theorem~\ref{allhaveinfinity}. 

Now if $-K$ is smoothly isotopic to $K$ then notice that if we set $L_+^{n,i}$ equal to $-L_-^{n,i}$, then this is the second claimed family of knots in Theorem~\ref{allhaveinfinity}.

An analogous argument works to given the Legendrian knots $L_\pm^{n,i+\frac 12}$ in $\xi'$. 
\end{proof}

\section{Rational unknots}\label{sec:ruk}
In this section we will first classify non-loose rational unknots in lens space and then in the following subsection prove that Figures~\ref{fig:unknotsLegendrian1} and~\ref{fig:unknotsLegendrian2} give surgery diagrams for all non-loose rational unknots. 

\subsection{Classification of non-loose unknots}
We begin by outlining how the classification of non-loose knots will work. First recall that $L(p,q)$ is a union of two solid tori $V_0$ and $V_1$. Fixing coordinates on $T^2$ we can think of $V_0$ as having lower meridian $-p/q$ and $V_1$ as having upper meridian $0$. If we fix two curves of slope $s$ on $\partial V_0=\partial V_1$, then a contact structure is determined by taking a tight contact structure on $V_0$ in $\Tight(S_{-p/q},s)$ and a tight contact structure on $V_1$ in $\Tight(S^0,s)$ and gluing them together (there is no guarantee that the result is tight). 

We will think of the core of $V_0$ as being the rational unknot $K_0$. If we are looking for non-loose realizations of $K_0$ then we can take any slope $s$ with an edge to $-p/q$ in the Farey graph. Now a tight contact structure on $V_0$ with convex boundary having two dividing curves of slope $s$ is a standard neighborhood of a Legendrian representative in the knot type of $K_0$. (See the end of Section~\ref{kot}.) So if it is non-loose the contact structure on $V_1$ must be tight. Notice that $s$ is either in $(-p/q, (-p/q)^c]$ or $[(-p/q)^a, -p/q)$ (recall our notation for intervals in the Farey graph from Section~\ref{fgraph}). If it is in the first interval then any contact structure on $V_1$ is in $Tight(S^0,s)$ and is given by a signed path going from $s$ clockwise to $0$ and glue in the unique contact structure on $V_0$  will give a tight contact structure on $L(p,q)$. So we ignore these as we only wish to consider non-loose knots in overtwisted contact structures. Now if $s$ is in $[(-p/q)^a, -p/q)$ then the contact structure on $V_0$ will have convex tori parallel to the boundary with dividing slope any number clockwise of $-p/q$ and anti-clockwise of $s$, in particular it will contain a convex torus with dividing slope $0$. Note when we glue the contact structure on $V_0$ to any tight contact structure on $V_1$ the result will be overtwisted since a Legendrian divide on the convex torus with dividing slope $0$ will bound a disk in $V_1$ and hence we have an overtwisted disk. Thus the non-loose Legendrian realizations of $K_0$ with ``framing" $s$ are in one to one correspondence with the elements of $\Tight(S^0,s)$. 

We now notice that the meridional disk for $V_1$ will provide a rational Seifert surface for $K_0$, which has order $p$ in $L(p,q)$. So the rational Thurston-Bennequin invariant is the rational linking of $K_0$ with a contact push-off of $K_0$, given by a Legendrian divide on $\partial V_0$ is $\frac 1p (0 \bigcdot s)$. Similarly if $e$ is the Euler class of the contact structure on $V_1$ then the rational rotation number is $\rot_\Q=\frac 1p e(D)$ where $D$ is the meridional disk for $V_1$. To see this we show that there is an annulus in $V_0$ extending $D$ to a rational Seifert surface for $K_0$ and the tangent vector to $K_0$ and to $\partial D$ can be extended to a nonzero section of the contact structure on the annulus. Thus the relative Euler class of the contact structure pulled back to the domain of the Seifert surface (see Section~\ref{rational}) is the same as the relative Euler class of $D$. This justifies the formula. To see the claim about the annulus, we note that there is a cover of $V_0$ such that $\partial D$ pulls back to (some number of copies of) a longitude. We now consider a model for this solid torus. We choose a framing so that the pull-back of $\partial D$ has slope $0$. Then the contact structure can be assumed to be $\ker(\sin 2\pi x \, dy + \cos 2\pi x\, dz)$ on $S^1\times \R^2$ thought of as $\R^3$ modulo $x\mapsto x+n$ for some positive integer $n$. Now consider the annulus $A=\{(x,y,z): z=0, y\geq 0\}$ in $S^1\times \R^2$. We can see that the annulus if foliated by ruling curves parallel to the $x$-axis showing that the vector field pointing in the positive $x$-direction orients the lift of $K_0$ and extends to the annulus. (We note here that it is essential that the curve $\partial D$ is positive with respect to the contact framing, meaning that it is not between the meridional slope and the contact framing, for the above model to hold, but that is the case here.) We can now take a quotient of a convex neighborhood of the $x$-axis to construct a model for $V_0$ with the claimed extension of $D$ to a rational Seifert surface for $K_0$. 

For the Euler class of a contact structure described above we note that it is the obstruction to extending a trivialization of $\xi$ over the $1$-skeleton to the $2$-skeleton. We can take $K_0$ to be the $1$-skeleton of the lens space and then (the extended as above) $D$ to be the $2$-skeleton. Since we can take a tangent vector to $K_0$ to be a trivialization of $\xi$ over the $1$-skeleton, we see that $e(\xi)$ evaluated on the generator of $H_2(L(p,q))$ coming from $D$ is simply $e(D)$ where $e$ is the relative Euler class of $V_0$ discussed above. 

The slopes in $[(-p/q)^a, -p/q)$ with an edge to $-p/q$ are simply $s_k= (-p/q)^a\oplus k(-p/q)$.
We now observe how stabilization works. Suppose that $L$ is a Legendrian knot corresponding to the unique tight contact structure on $V_0$ with slope $s_k$, for $k>0$, and some tight contact structure on $V_1$ in $\Tight(S^0,s_k)$. Now inside of $V_0$ there are two solid tori $S_\pm$ (smoothly isotopic to $V_0$) that has convex boundary having two dividing curves of slope $s_{k-1}$ such that the basic slice $V_0\setminus S_\pm$ is a $\pm$ basic slice $B_\pm$. Now $S_\pm$ is the standard neighborhood of $S_\pm(L)$ and the contact structure on the complement is given by the contact structure on $V_1$ union $B_\pm$. That is it is given by the path in the Farey graph describing the contact structure on $V_1$ extended by the edge describing $B_\pm$. This new path might not (usually wont) be a minimal path. If the path can be consistently shortened then the contact structure on the complement of $S_\pm$ is tight, and hence the Legendrian knot that $S_\pm$ is a neighborhood of is non-loose. If the path can be inconsistently shortened, then the complement of $S_\pm$ is overtwisted, and the Legendrian knot that $S_\pm$ is a neighborhood of is loose. 

We now consider $s_0=(-p/q)^a$. When we stabilize the knot corresponding to $V_0$ it will have neighborhood $S_\pm$ with dividing slope $(-p/q)^c$ and the basic slice $B_\pm=V_0\setminus S_\pm$ will contain convex tori parallel to the boundary with dividing slope any rational number clockwise of $(-p/q)^c$ and anti-clockwise of $(-p/q)^a$. In particular it will contain a convex surface with dividing slope $0$ and hence $V_1\cup B_\pm$ will be overtwisted. That is any stabilization of a Legendrian knot corresponding to $V_0$ with dividing slope $s_0$ will be loose. 

We now give the classification of Legendrian rational unknots in $L(p,1)$, recovering the results from \cite{GeigesOnaran15} and adding information about how the non-loose unknots are related by stabilization. 

\begin{proof}[Proof of Theorem~\ref{Lp1}]
Notice that $(-p)^a=-\infty$. So with the notation above there is a non-loose rational unknot $L$ with standard neighborhood $V_0$ with convex boundary having slope $\infty$. From above we see that $\tb_\Q(L)=\frac 1p$. Moreover the contact structure on $V_1$ is the unique tight contact structure on a solid torus with longitudinal dividing curves and the formula for the rotation number above gives $\rot_\Q(L)=0$. Our discussion above also shows that $S_\pm(L)$ is loose. 

Now consider $s_1=-p-1$. The contact structures on $V_1$ with this dividing slope correspond to signed paths from $-p-1$ clockwise to $-1$. That is there are $p$ edges in the path and they are all in a continued fractions block, so the contact structure on $V_1$ is determined by the number of $+$ signs. They can be between $0$ and $p$, that is there are $p+1$ non-loose knots, denote them by $L_k$ where $k$ is the number of $-$ signs in the path describing the contact structure on $V_1$. From the formulas above we see that $\tb_\Q(L_k)=(p+1)/p$ and $\rot_\Q(L_k)=1-2k/p$. We also know that the Euler class of the contact structure containing $L_k$ is $p-2k$. 

Notice that if we negatively stabilize $L_0$ then the contact structure on its compliment is described by the path from $-1$ anti-clockwise to $-p-1$ having all positive signs and a negative sign on the edge from $-p-1$ to $\infty$. Since we can inconsistently shorten this path, we see that it is overtwisted. Thus $S_-(L_0)$ is loose. Similarly, if we positively stabilize $L_0$ then we have the same path from $-1$ to $-p-1$ and then a $+$ on the edge from $-p-1$ to $\infty$. This path can be consistently shortened to the one that goes from $0$ to $\infty$, that is the contact structure on the complement of the standard neighborhood of $L$. That is $S_+(L_0)=L$. Similarly $S_-(L_p)=L$ and $S_+(L_p)$ is loose. All the other $L_k$ have complements described by paths with both $+$ and $-$ signs, so the same analysis as above shows that any stabilization of them will be loose (since we can always shuffle the basic slices in the continued fraction block so that the sign opposite to  the stabilizing sign appears adjacent to the basic slice corresponding to the stabilization). 

Now consider $s_l$ for $l>1$. The path describing the contact structures on $V_1$ consist of an edge from $s_l$ to $-p$ and then edges from $-p, -p+1, \ldots, -1$. Those last $p-1$ edges are all in a continued fraction block and the first edge is not. So there are $2p$ non-loose knots. We denote them by $L^l_{\pm,k}$ where $k$ is the number of $-$ signs in the continued fraction block and $\pm$ denotes the sign of the first edge in the path. The argument above shows that $L^l_{+,0}$ has positive stabilization $L^{l-1}_{+,0}$ if $l>2$ and $S_+(L^2_{+,0})=L_0$, while all the negative stabilizations are loose. Similarly for the $L^l_{-,p-1}$. Thus $L$ together with $L_0, L_p$ and all the $L^l_{+,0}$ and $L^k_{-,p-1}$ form a $\normalfont\textsf{V}$ based at $(0,1/p)$. 

We can similarly check that $L^l_{+, k}$ , for $l>2$ and $k=1,\ldots, p-2$, positively stabilizes to $L^{l-1}_{+,k}$ and becomes loose when stabilizing negatively, while $L^l_{-, k}$, for $l>2$ and $k=1,\ldots, p-2$, negatively stabilizes to $L^{l-1}_{-,k}$ and becomes loose when stabilizing positively. Now consider $L^2_{+,k}$ for $k=1, \ldots, p-1$ (since $L^2_{+,0}$ was discussed above). If we negatively stabilize the knot it becomes loose, but if we positively stabilize it we can consistently shorten the path describing the contact structure on the complement to see that we get $L_{k}$. Similarly, positively stabilizing $L^2_{-,k-1}$, for $k=0, \ldots, p-1$ (since $L^2_{-,p-1}$ was discussed above), gives a loose knot while negatively stabilizing the knot gives $L_{k}$. Thus we see that the $L_k$, for $k=1, \ldots, p-1$ are the vertex of a $\normalfont\textsf{V}$ based at $(1-2k/p, (p+1)/p)$. 
\end{proof}

We now turn to the classification of non-loose rational unknots in $L(p,p-1)$. 
\begin{proof}[Proof of Theorem~\ref{Lpp1}]
We first notice that $(-p/(p-1))^a=-(p-1)/(p-2)$. With the notation above We consider the non-loose unknot with standard neighborhood $V_0$ having lower meridional slope $-p/(p-1)$ and dividing slope $-(p-1)/(p-2)$. Its complement is the solid torus $V_1$ with upper meridian $0$ and dividing slope $-(p-1)/(p-2)$. There are two tight contact structures on such a torus. So there are two non-loose rational unknots $L_\pm$ with $\tb_\Q(L_\pm)=(p-1)/p$ and we can compute $\rot_Q(L_\pm)=\pm(1-2/p)$. As discussed above, we know any stabilization of either of these knots is loose. In addition, the Euler class of the contact structure containing $L_\pm$ is $\pm(2-p)$. Notice that the sign of the Euler class is opposite to the sign of the rotation number. This is because $-p/(p-1)$ is clockwise of $-(p-1)/(p-2)$. 

Now $s_1=-(2p-1)/(2p-3)$. The contact structures on $V_1$ with these boundary conditions are given by the signed paths with vertices $-(2p-1)/(2p-3)$, $-p/(p-1)$, $-1$, and  the two edges are part of a continued fraction block, so there are three possible contact structures and hence three non-loose knots: $L_\pm^1$ corresponding to all of the signs being $\pm$ and $K^1$ corresponding the the path with one $+$ and one $-$. The rational Thurston-Bennequin invariant of these Legendrian knots is $(2p-1)/p$ and $\rot_\Q(L^1_\pm)= \pm (2-1/p)$ while $\rot_\Q(K^1)=0$. As we argued in the proof of Theorem~\ref{Lp1} we have $S_\pm(L^1_\pm)=L_\pm$ and $S_\mp(L^1_\pm)$ is loose, while any stabilization of $K^1$ is loose. We also note that the Euler class of the contact structure containing $K^1$ is zero. 

For $s_k$, with $k>1$, we see the contact structures on $V_1$ are described by a path with two edges in the Farey graph and these edges are not in a continued fraction block. Thus there are four contact structures on $V_1$ and hence four non-loose knots. Denote by $L_\pm^k$ the Legendrian knot whose complement is given by all the signs being $\pm$ and denote by $K_\pm^k$ the Legendrian knot whose edge from $-1$ to $-p/(p-1)$ is $\pm$. Once again as in the proof of Theorem~\ref{Lp1} we see that $L_+$ together with the $L_+^k$ form a back slash based at $(-1+\frac 2p, \frac{p-1}p)$ while the $L_-$ and $L^k_-$ give a forward slash based at $(1-\frac 2p, \frac{p-1}p)$. Similarly the $K^1$ and $K^k_\pm$ give a $\normalfont\textsf{V}$ based at $(0, \frac{2p-1}p)$.
\end{proof}

We will now classify non-loose Legendrian rational unknots in $L(2n+1,2)$ were we must consider $\pm K_0$ and $\pm K_1$. 
\begin{proof}[Proof of Theorem~\ref{l2n12}]
We start with the non-loose representations of $K_0$. We have that $(-(2n+1)/2)^a= -n-1$ and so $V_0$ with dividing curves of this slope is a standard neighborhood of a non-loose Legendrian realization of $K_0$. The tight contact structure on the complement $V_1$ will be determined by a signed path in the Farey graph from $-n-1$ to $-1$. Notice that this path forms a continued fraction block so the contact structure is determined by the number of $-$ signs in the path. So we have $n+1$ non-loose Legendrian representatives $L_l$, for $l=0, \dots, n$. We easily compute $\tb_\Q(L_l)=(n+1)/(2n+1)$, $\rot_\Q(L_l)= (n-2l)/(2n+1)$, and the Euler class of the contact structure containing $L_l$ is $2l-n$. 

Now  $s_k=-(n(2k+1)+k+1)/(2k+1)$, here we are using the notation established at the start of this section so these are the slopes of non-loose Legendrian realizations of $K_0$. We analyzed $k=0$ above. For $k=1$, the path from $s_1$ to $-1$ has two jumps in a continued fraction block, that is from $s_1$ to $-(2n+1)/2$ and then to $-n$, and $(n-1)$ jumps from $-n$ to $-1$ and these are all in a continued fraction block. So there are $3n$ tight contact structures on $V_1$ with these boundary conditions. We denote by $L^1_{j,l}$ the non-loose Legendrian knot whose complement as $j$ of $-$ signs in the first continued fraction block, and $l$ in the second. So $j=0,1,2$ and $l=0,\ldots, n-1$. Notice that when $j=0$ then if we positively stabilize $L^1_{0,l}$ we will add a $+$ basic slice to $V_1$ with boundary slopes $s_1$ and $-n-1$. We can then consistently shorten the path to get $L_l$. However, if we negatively stabilize the knot then the contact structure will become overtwisted since there will be an inconsistent shortening of the path, i.e. $S_+(L^1_{0,l}) = L_l$ and $S_-(L^1_{0,l})$ is loose. Similarly $S_-(L^1_{2,l})=L_{l+1}$ and $S_+(L^1_{2,l})$ is loose. So $L^1_{j,l}$ will stabilize to one of the $L_l$ for $j=0$ or $2$. We also know that any stabilization of $L^1_{1,l}$ will be loose. It is easy to see that $\tb_\Q(L^1_{j,l})= (3n+2)/(2n+1)$, $\rot_\Q(L^1_{1,l})=(n-1-2l)/(2n+1)$ for $l=0,\ldots, n-1$, and the Euler class of the contact structures containing the $L^1_{1,l}$ is $n-1-2l$, for $l=0,\ldots, n-1$. 

Finally, for $k>1$ we see the path in the Farey graph from $s_k$ to $-1$ goes from $s_k$ to $-(2n+1)/2$ to $-n$, and then from $-n$ to $-1$. The first two jumps are not in a continued fraction block and the last  $n-1$ are. So there are $4n$ contact structures on $V_1$ with dividing curves of slope $s_k$ and corresponding non-loose rational unknots $L^k_{i, j,l}$ where $i=0,1$ is the number of $-$ signs on the edge from $s_k$ to $-(2n+1)/2$, $j=0,1$ is the number of $-$ signs on the jump from $-(2n+1)/2$ to $-n$ and $l$ is the number of $-$ signs from $-n$ to $-1$. Notice that the positive stabilization of $L^k_{0,j,k}$ is $L^{k-1}_{j-1,k}$ for $k>3$ and $L^1_{j,l}$ for $k=2$, while a negative stabilization of is loose. Similarly a negative stabilization of $L^k_{1,j,k}$ is $L^{k-1}_{j+1,l}$ for $k>3$ and $L^1_{j+1,l}$ for $k=2$, while a positive stabilization is loose. 

From the above we see that $L_l$ is the base for a back slash based at $(-n/(2n+1), (n+1)/(2n+1))$ if $l=0$, a forward slash based at $(n/(2n+1), (n+1)/(2n+1))$ if $l=n$, and a $\normalfont\textsf{V}$ based at $((n-2k)/(2n+1), (n+1)/(2n+1))$ for $l=1,\ldots, n-1$ (notice that the sign of the Euler class is opposite of the rotation number). Similarly $L^1_{1,l}$ is be base for a $\normalfont\textsf{V}$ based at $((n-1-2k)/(2n+1),(3n+2)/(2n+1))$ finishing the classification of non-loose realizations of $K_0$ (and $-K_0$). 

We now turn to $K_1$ (and $-K_1$).  Now $V_1$ will be the neighborhood of the Legendrian knot and since it has an upper meridional slope of $0$ the possible slopes on its dividing curves are $s_0=\infty$ and $s_k=1/k$ for $k\in \N$. The contact structures on the complement of $V_1$ with dividing slope $s_k$ are given by a signed path in the Farey graph from $-(2n+1)/2$ clockwise to $s_k$ (recall the first jump will not have a sign, so this is really a signed path from $-n$ to $s_k$). Thus for $k=0$ there will be exactly two contact structures since there is an edge from $-n$ to $\infty$, and thus two non-loose knots $L_\pm$. We compute that $\tb_\Q(L_\pm)=2/(2n+1)$ and $\rot_\Q(L_\pm)=\pm 1/(2n+1)$ and the Euler class of the contact structure containing $L_{\pm}$ is $\mp1$.  For $k=1$ the contact structures on $V_0$ are given by signed paths in the Farey graph from $-n$ clockwise to $1$. This path has $(n+1)$ jumps all in a continued fraction block, so there are $n+2$ contact structures determined by the number $l$ of $-$ on the path, and hence $n+2$ non-loose Legendrian knots $L_l$. As above we see that $L_0$ negatively stabilizes to $L_+$  and positively stabilizes to be loose, while $L_{n+1}$ positively stabilizes to $L_-$ and negatively stabilizes to be loose (notice that the sign of the Euler class is opposite to the sign of the rotation number). All the other $L_l$ become loose after any stabilization. We also compute that $\tb_\Q(L_l)= (2n+3)/(2n+1)$, $\rot_\Q=(2n+2-4l)/(2n+1)$ for $l=0,\ldots,n+1$, and the contact structure containing $L_l$ has Euler number $-(2n+2-4l)$. Finally, for $k>1$ the signed path determining the contact structure on $V_0$ has one jump from $1/k$ to $0$ and then a continued fraction block from $0$ to $-n$.  Thus there are $2(n+1)$ contact structures on $V_0$ and hence $2(n+1)$ non-loose Legendrian knots $L^k_{\pm, l}$. The same arguments as in the $K_0$ case tells us that $L_+$ the base for a forward slash based at $(1/(2n+1),  2/(2n+1))$, $L_-$ contributes a backward slash based at $(1/(2n+1), 2/(2n+1))$, and the $L_l$ with $l=1, \ldots, n$ is the base of a $\normalfont\textsf{V}$ based at $((2n-2(2l-1))/(2n+1), (2n+3)/(2n+1))$. 
\end{proof}

We can now give the general classification of non-loose rational unknots. 

\begin{proof}[Proof of Theorem~\ref{lensgeneral}]
Consider the lens space $L(p,q)$ where $q\not= 1$ or $p-1$ and let $-p/q=[a_0,a_1, \ldots, a_n]$. We first notice that $(-p/q)^a=[a_0,\ldots, a_{n-1}]$ and we denote this number by $-p''/q''$. Notice that $p'' = \overline{q}$ where $1 \leq \overline{q} \leq p$ and $q\overline{q} \equiv 1 \pmod p$. 

Notice that the dividing slopes of a standard neighborhood of non-loose representatives of $K_0$ are $s_k=-(p''+kp)/(q''+kq)$ for $k \geq 0$. To understand when we see a slash or a $\normalfont\textsf{V}$, we need to discuss the signed paths describing tight contact structures on $V_1$ with boundary conditions $s_k$. For $n \geq 2$, consider the path $P$ in the Farey graph from $r=[a_0,\ldots, a_{n-3}, a_{n-2}+1]$ clockwise to $-1$. Decorations on this path will correspond to $A=|(a_0+1)\cdots (a_{n-2}+1)|$ tight contact structures on $V_1$ with dividing slope $r$. We will see that for any $s_k$ the signed path describing a tight contact structure on $V_1$ will contain the path $P$. Specifically, for $s_0$ we see that the path from $-p''/q''$ to $-1$ consists of a continued fraction block of length $|a_{n-1}+1|$ from $-p''/q''$ to $r$ and then $P$. This gives $|a_{n-1}|A$ contact structures.  For $s_1$ we see that the path from $s_1$ to $-1$ consists of a continued fraction block of length $|a_n|$ from $s_1$ to $s = [a_0,\ldots, a_{n-1}+1]$, and then a continued fraction block of length $|a_{n-1}+2|$ from $s$ to $r$ followed by $P$. This gives $|a_n-1||a_{n-1}+1|A$ tight contact structures. For $k>1$ the path from $s_k$ to $-1$ consists of one jump from $s_k$ to $-p/q$, followed by a continued fraction block of length $|a_n+1|$ from $-q/p$ to $[a_0,\ldots, a_{n-1}+1]$, followed by a continued fraction block of length $|a_{n-1}+2|$ from $[a_0,\ldots, a_{n-1}+1]$ to $r$, and finally $P$. This gives $2|a_n||a_{n-1}+1|A$ contact structures. 

Fix a choice $P_s$ of signs on $P$. We will consider non-loose Legendrian knot $L^1_{i,j,P_s}$ with standard neighborhoods having boundary slope $s_1$ and whose complement is given by a path with $i$ negative signs and $|a_n|-i$ positive signs from $s_1$ to $[a_0,\ldots, a_{n-1}+1]$, and $j$ negative signs and $|a_{n-1}+1|-j$ positive signs from $[a_0,\ldots, a_{n-1}+1]$ to $r$, followed by $P_s$. Notice if we stabilize $L^1_{i,j,P_s}$ then its complement will be the complement of $L^1_{i,j,P_s}$ with a basic slice added to it. The basic slice will have slopes $s_0$ and $s_1$ and we will be able to shorten the resulting path. So if $i\not=0$, or $|a_n|$ the the resulting contact structure will not be tight, since we can shuffle a basic slice of the opposite sign to the stabilization to be adjacent to $s_1$ so when we shorten we have inconsistent signs. On the other hand if $i=0$ then a positive stabilization will be tight. To see which contact structure we get, we need to consider the non-loose Legendrian knot $L^0_{l,P_s}$ with standard neighborhood having boundary slope $s_0$ and whose complement is given by a path from $-p''/q''$ to $r$ with $l$ negative signs and $|a_{n-1}+1|-l$ positive signs followed by $P_s$. We easily see that $S_+(L^1_{0,j,P_s})$ is the same as $L^0_{j,P_s}$ while the negative stabilization is loose. Similarly $S_-(L^1_{|a_n|, j,P_s})$ is the same as $L^0_{j+1,P_s}$ while a positive stabilization is loose. 

Finally consider Legendrian knots $L^k_{\pm,i,j,P_s}$ with standard neighborhood having dividing slope $s_k$ and whose complement is given by a path with a $\pm$ basic slice from $s_k$ to $-p/q$, $i$ negative signs in the continued fraction block from $-p/q$ to $[a_0,\ldots, a_{n-1}+1]$, $j$ negative signs in the continued fractions block from $[a_0,\ldots, a_{n-1}+1]$ to $r$, and then $P_s$. As above we see that for $k>2$ we have $S_\mp(L^k_{\pm,i,j,P_s})$ is loose while $S_\pm(L^k_{\pm,i,j,P_s})$ is $L^{k-1}_{\pm,i,j,P_s}$. Similarly $S_\mp(L^2_{\pm,i,j,P_s})$ is loose while $S_+(L^2_{\pm, i,j,P_s})$ is $L^1_{i,j,P_s}$ while $S_-(L^2_{-,i,j,P_s})$ is $L^1_{i+1,j,P_s}$. 

Putting this together we see that the $L^0_{l,P_s}$ for $l=0$ is the base for a back slash, while $l=|a_{n-1}+1|$ is the base for a forward slash, there are $|(a_0+1)\cdots (a_{n-2}+1)|$ of each of these. All the other $L^0_{l,P_s}$ are the base of a $\normalfont\textsf{V}$, there are $\left|(a_0+1)\cdots (a_{n-2}+1)(a_{n-1}+2)\right|$ of these. We note that 
$\tb_\Q=p''/p$ for all of these knots and the rotation numbers of these non-loose knots and Euler classes of the contact structures in which they live can be computed as discussed in the beginning of this section. Notice that the sign of the Euler class is opposite to the sign of the rotation number since $-p/q$ is clockwise of $(-p/q)^a$.  
Finally the $L^1_{i,j,P_s}$ with $i\not=0$ or $|a_n|$ each give a $\normalfont\textsf{V}$, and there are  
\begin{align*}
  (|(a_n-1)(a_{n-1}+1)|-2|a_{n-1}+1|)|(a_0+1)\cdots (a_{n-2}+1)| = |(a_0+1)\cdots(a_n+1)|
\end{align*}
of these. These knots have $\tb_\Q=(p''+p)/p$ and the rotation numbers of these non-loose knots and Euler classes of the contact structures in which they live can be computed as discussed in the beginning of this section. Notice that the sign of the Euler class is opposite to the sign of the rotation number since $-p/q$ is clockwise of $(-p/q)^a$.  

We are left to see that non-loose realizations of $K_0$ occur in at least two contact structures. Suppose that $a_0\not=-2$. Notice that the path from $-p/q$ to $-1$ will go through $a_0+1$. Consider a paths $P_l$ in the Farey graph from $-p/q$ to $-1$ such that all the sign less than $a+1$ are $+$ and the jumps from $a_0+1$ to $-1$ have $l$ negative signs and $|a_0+2|-l$ positive signs. Then the Euler class of the contact structure on $L(p,q)$ given by this path will be $c+(|a_0+2| -2l)$ for $l=0,\ldots, |a_0+2|$, where $c$ is a constant determined by the signed path from $-p/q$ to $a_0+1$. Also notice that $p$ is at least $|a_0+1|$ (since it is between $a_0+1$ and $a_0$, it will be an iterated Farey sum of these numbers).  Thus the order of the second cohomology of $L(p,q)$ is at least $|a_0+1|$ so the Euler classes indicated above will provide at least two distinct values.

We now assume that $a_0=-2$ and $k$ is the smallest index such that $a_k\not=-2$ (note there will be such a $k$ since if all the $a_i$ are $-2$ then $-p/q$ would be $-n/(n-1)$ for some $n$ and we are assuming our lens space is not $L(n,n-1)$ which was dealt with earlier). In this case $-p/q$ will be between $-(k+1)/k$ and $-(2k+3)/(2k+1)$. To see this note that $-(k+1)/k$ is $[-2.\ldots, -2]$ where there are $k$ is the number of $-2$s, and  $-(k+2)/(k+1)$ has the same continued fraction expansion except $k+1$ is the number of $-2$s. From this we see that $[-2,\ldots, -2,-3]$, where the number of $-2$s is $k$, is $-(2k+3)/(2k+1)$. Thus $-(2k+3)/(2k+1)$ is clockwise of $-p/q$ and we must have that $-(k+1)/k$ is anti-clockwise of $-p/q$ (since dropping the last term in a continued fraction moves the number anti-clockwise). In particular, 
$-p/q$ is in the interval claimed. Notice that this implies that the second cohomology of $L(p,q)$ as order at least $2k+3$. 

The first continued fraction block in the path from $-p/q$ to $-1$, traversed anti-clockwise goes from $-1$ to $-(k+2)/(k+1)$ to $-(k+2)/(k+1)\oplus i(-(k+1)/k)$ for $i=1,\ldots, |a_k+1|$. Let $P_l$ be the signed path from $-p/q$ to $-1$ that is all plus signs except on the continued fraction block discussed above, and on that continued fraction block there are $l$ negative signs. One can easily check that each basic slice in the last continued fraction block contributes $\pm(k+1)$ to the Euler class of the contact structure. So the Euler class of the contact structure corresponding to $P_l$ is $c+(k+1)(|a_k+1|-2l)$, for $l=0, \ldots, |a_k+1|$, where $c$ is the contribution from the part of the path before the last continued fraction block. Once again, it is clear that there are at least two distinct contact structures supporting non-loose representatives of $K_0$. 

The same sort of analysis as above classifies non-loose representations of $K_1$ and the case $n=1$. 
\end{proof}
We now end by noting that any rational unknot, other than the unknot in $S^3$, has non-loose Legendrian representatives in at least two contact structures and has no non-loose transverse representatives. 
\begin{proof}[Proof of Corollary~\ref{courseRU}]
The first part follows from Theorems~\ref{Lp1}, \ref{Lpp1}, and~\ref{lensgeneral}. The fact that all non-loose Legendrian knots have a lower bound on their $\tb_\Q$ (as shown by the same set of theorems), we clearly have no non-loose transverse knots. 
\end{proof}

\subsection{Surgery diagrams for rational unknots}\label{sdru}

We will begin by considering the diagram on the left-hand side of Figure~\ref{fig:unknotsLegendrian1} and show that these surgery diagrams describe all non-loose Legendrian representatives of $K_1$ with $\tb_\Q=q/p$ in $L(p,q)$. To this end, we notice that the smooth Dehn surgery done on the black knot is $-1+(q/(p+q))=-p/(p+q)$ and performing a Rolfson twist yields surgery coefficient $-p/q$. So the diagram does indeed give the lens space $L(p,q)$. Moreover, one can see that the Rolfson twist changes coordinates on the torus by the matrix $\begin{bmatrix}1& -1\\ 0&1\end{bmatrix}$. This matrix also takes the path defining an overtwisted contact structure on $L(p,q)$ that starts at $0$ and goes anticlockwise to $-p/q$ and goes past $\infty$ to a path that starts at $0$ and goes anticlockwise to $-p/(p+q)$ and goes past $\infty$. Thus all the contact structures constructed in the previous section are also described by this surgery diagram. Finally, notice that the solid torus giving a neighborhood of $K_1$ in the construction in the previous section had upper meridian $0$ and dividing slope $\infty$. The change of coordinates above sends this to a solid torus with upper meridian $0$ and dividing slope $-1$. This solid torus is precisely a neighborhood of the gray curve in the figure, thus completing the justification that the given surgery diagram gives all $\tb_\Q=q/p$ representatives of $K_1$.

We now consider the diagram on the right-hand side of Figure~\ref{fig:unknotsLegendrian1} and show that these surgery diagrams describe the non-loose Legendrian representatives of $K_1$ with $\tb_\Q=(p+q)/p$ in $L(p,q)$. We first note that the smooth surgery diagram consists a Hopf link with components $U_1$ and $U_2$, and the smooth surgery coefficients on the components are $a_0$ and $s=[a_1,\ldots, a_n]$, respectively; thus the surgery diagram describes $L(p,q)$ and the contact structure is obviously overtwisted as we are doing $(+1)$-contact surgery on a stabilized knot. Moreover, the gray knot $K_1$ is clearly non-loose since performing Legendrian surgery on it will cancel the $(+1)$-contact surgery on $U_1$ and hence result in a tight manifold. 

We now wish to identify the precise non-loose knot realized by the surgery diagram. To this end, we begin by considering $L(-a_0,1)$. In this case, the diagram on the right of Figure~\ref{fig:unknotsLegendrian1} does not have the unknot with contact framing $(s+1)$. The diagram gives $-a_0+1$ distinct non-loose Legendrian representatives of $K_1$, which is exactly the number claimed in Theorem~\ref{Lp1}; that is, it is equal to the number of contact structures on $T^2\times I$ with dividing slopes between $a_0-1$ and $-1$. So these are all of the non-loose rational unknots in this case. Now $L(p,q)$ is obtained from $L(-a_0,1)$ by a further contact $(s+1)$ surgery on the surgery dual to the contact $(+1)$ surgered unknot in the diagram. The number of contact structures one can achieve through this is $|a_0-1||a_1+1|\cdots|a_n+1|$, the same as the number of contact structures on $T^2\times I$ with dividing slopes $a_0-1$ and $(-p/q)^a$. Also, this number coincides the number of tight contact structures on the complement of a standard neighborhood of $K_1$ with slope $s_1$ in the proof of Theorem~\ref{lensgeneral} (notice that the proof of Theorem~\ref{lensgeneral} only considers $K_0$. For $K_1$ we need to reverse the order of $a_0, \ldots, a_n$). Thus again, we see that all the contact structures, and non-loose knots, described in Theorem~\ref{lensgeneral} are realized in the surgery diagram.

Turning now to Figure~\ref{fig:unknotsLegendrian2}, we note that if we consider the smooth surgery diagram then the contact framing on $K_1$ is indeed the claimed framing (note the smooth surgery coefficient on the bottom-most black knot is $-1$ and the unknots directly above have coefficient $-2$, so one can do a sequence of blow-downs to see that $K_1$ has the claimed framing), and hence realize the desired non-loose representatives of $K_1$, {\em c.f.} \cite{EtnyreMinMukherjee22pre}.

\section{Cabling non-loose Legendrian knots}
In this section we prove our structure results concerning when non-looseness is preserved under cabling and find non-simple non-loose transverse knots that have no Giroux torsion in their complements. 
\subsection{Positive cables of non-loose knots}
In this section, we prove Theorem~\ref{thm:poscable} that says the standard $(p,q)$-cable $L_{p,q}$ of $L$ is non-loose if and only if $L$ is non-loose.

\begin{proof}[Proof of Theorem~\ref{thm:poscable}]
Given a Legendrian knot $L\in \mathcal{L}(K)$ and a rational number $q/p> \tb(L)$, let $L_{p,q}$ be the standard Legendrian $(p,q)$-cable of $L$.  Recall this means that we take a standard neighborhood $N'$ of $L$ so that its ruling curves have slope $q/p$ and $L_{p,q}$ is one of these ruling curves. It is clear that if $L$ is loose then so is $L_{p,q}$ since $L_{p,q}$ is contained in a standard neighborhood of $L$. We will now show that if $L$ is non-loose then so is $L_{p,q}$. We begin with some notation. 

Let $N$ be a standard neighborhood of $L_{p,q}.$ We can assume that the intersection of $\partial N'$ with the complement $\overline{(M-N)}$ in an essential annulus $A$ with $\partial A$ ruling curves on $\partial N$. We can make $A$ convex and take an $I$-invariant neighborhood $A\times[0,1]$ of $A$ so that $N\cup (A\times [0,1])$ (after rounding corners) is an $I$-invariant thickened torus with boundary components $T$ and $T'$. We can assume that $T$ is the "outer torus", meaning that $T$ bounds a solid torus $S$ that contains $N'$ (and $T'$). Notice $T$ is contact isotopic to $\partial N'$ and thus the dividing curves on $A$ run from one boundary component to the other.

We now assume that $L_{p,q}$ is loose and derive a contradiction. To this end, we know there exists an overtwisted disk $D$ disjoint from $N$. Note that, $D$ might intersect the annulus $A$. We may smoothly isotope $A$ to $A'$ fixing $L_{p,q}$ such that $A'\cap D=\emptyset$. Given the smooth isotopy from $A$ to $A'$, we apply Colin's discritization of isotopy \cite{Honda02} to find a family of annuli $A=A_0, A_1,\cdots A_n=A'$ where $A_{i+1}$ is obtained from $A_i$ by a single bypass attachment. We may take the union of $A_i$ with one of the annuli of $\partial N\setminus \partial A_i$ to obtain a torus $T_i$ that bounds a solid torus $S_i$ that contains the other annulus of $\partial N\setminus \partial A_i$ (that is $T_i$ is the ``outer torus" of the two tori formed from $A_i$ and one of the two annuli of $\partial N\setminus \partial A_i$). We say the bypass taking $A_i$ to $A_{i+1}$ is attached from the ``outside" if it is outside of $S_i$ and from the ``inside" otherwise. In the proof of the following lemma it will be important to note that because $T_i$ contains a portion of $\partial N$ that contains ruling curves, we can use Lemma~\ref{isruling} to assume that $L_{p,q}$ sits on $T_i$. We now have the following result concerning $S_i$.

\begin{lemma}\label{claim1}
For all $i$ we have that $S_i$ is tight and the standard neighborhood $N'$ of $L$ is always contained in $S_i$
\end{lemma}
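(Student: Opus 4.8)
The plan is to induct on $i$. For $i = 0$ we have $S_0 = S$, the solid torus containing $N'$ that we constructed from $N \cup (A \times [0,1])$; since $N'$ is a standard neighborhood of the non-loose knot $L$, its exterior is tight, and in particular the smaller region $S_0$ sitting between $T_0 = \partial N'$ and $T' \subset \partial N$ (both of which are boundary-parallel in $N'$) is tight, and of course contains $N'$. For the inductive step, assume $S_i$ is tight and $N' \subset S_i$. We must show the bypass move taking $A_i$ to $A_{i+1}$ does not destroy either property. First I would observe that $L_{p,q}$ lies on $T_i$ as a ruling curve (using Lemma~\ref{isruling}, since $T_i$ contains a chunk of $\partial N$ carrying ruling curves of slope $q/p$), so the dividing set of $T_i$ has slope $q/p$ with two dividing curves.

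The key point is a case analysis on whether the bypass taking $A_i$ to $A_{i+1}$ is attached from the inside or the outside of $S_i$. If it is attached from the outside, then $S_i \subset S_{i+1}$ (we are enlarging the solid torus), so $N' \subset S_i \subset S_{i+1}$ is automatic, and $S_{i+1}$ is tight because attaching a bypass to a convex torus bounding a tight solid torus from the outside yields either a thickening of that solid torus (when the slope does not change or the number of dividing curves changes) inside a larger region whose contact structure we must check is tight — actually here the cleaner argument is that $S_{i+1}$ is obtained from $S_i$ by attaching a basic slice or a trivial (bypass-increasing/decreasing) layer, and since $S_i$ is tight and the exterior of $N$ is tight, the region $S_{i+1}$, which still embeds in the tight manifold $M \setminus N$, is tight. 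If the bypass is attached from the inside, then $S_{i+1} \subset S_i$, so tightness of $S_{i+1}$ is inherited from tightness of $S_i$; the content is to show $N'$ is still contained in $S_{i+1}$. For this I would use that $L_{p,q}$ remains on $T_{i+1}$, that $N'$ is a neighborhood of $L$ with $L_{p,q}$ a ruling curve of slope $q/p > \tb(L)$ on $\partial N'$, and that the bypass attached to $A_i$ from the inside is disjoint from $N'$ (the annulus $A_i$ is in the exterior of $N'$ throughout, since $A_i \subset \partial N' \times [0,1]$-type region up to isotopy); hence the solid torus $S_{i+1}$, whose boundary $T_{i+1}$ still contains the ruling curve $L_{p,q}$ and which is bounded by $T_{i+1}$ on the side containing the relevant annulus of $\partial N$, must still contain $N'$, since shrinking $S_i$ by a bypass disjoint from $N'$ cannot push $N'$ outside.

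The main obstacle I expect is making precise the claim that a bypass attached from the inside of $S_i$ is genuinely disjoint from $N'$, and correspondingly that $N'$ survives inside $S_{i+1}$: one has to keep careful track of which annulus of $\partial N \setminus \partial A_i$ is glued to $A_i$ to form $T_i$, and verify that the isotopy $A = A_0, \dots, A_n = A'$ from Colin's discretization can be taken rel $L_{p,q}$ and in the exterior of $N'$. The subtlety is that $N'$ was only defined up to the initial configuration; once we isotope $A$, the torus $T_i$ changes, but $N'$ itself (the neighborhood of $L$) does not move, so the statement "$N' \subset S_i$" is about a fixed $N'$ and a varying $S_i$. I would handle this by arranging from the start that $A_i$ is always contained in a collar $\partial N' \times [0,1]$ disjoint from the core $L$, so that $N'$ — being a smaller neighborhood of $L$ — stays in the component of the complement of $T_i$ on the $\partial N'$-side. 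The tightness bookkeeping is then routine given Honda's classification and Colin's gluing, so the disjointness/containment bookkeeping is where the real care is needed.
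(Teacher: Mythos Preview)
Your proposal has genuine gaps in both halves of the inductive step.

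First, a small but telling error: you claim that since $L_{p,q}$ sits on $T_i$ as a ruling curve of slope $q/p$, ``the dividing set of $T_i$ has slope $q/p$.'' Ruling slope and dividing slope are different; $T_0$ is contact isotopic to $\partial N'$ and has dividing slope $\tb(L)$, not $q/p$. As bypasses are attached the dividing slope of $T_i$ genuinely changes, and tracking it is the whole content of the lemma.

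For the outside bypass, your tightness argument is incomplete. You cannot simply say $S_{i+1}$ ``embeds in the tight manifold $M\setminus N$'': $S_{i+1}$ is a solid torus in $M$, not a subset of $M\setminus N$ (it contains part of $\partial N$). What is true is that $S_{i+1}\setminus N'$ sits in the tight complement of $N'$, and gluing this tight thickened torus to the unique tight structure on $N'$ is tight \emph{unless} the thickened torus contains a convex torus of meridional slope $\infty$. Ruling out that possibility is a separate argument: the paper does it by supposing such a torus exists, locating a convex torus of slope $q/p$ between $\partial N'$ and it, and then using Lemma~\ref{seestab} to derive a contradiction between two incompatible descriptions of $L_{p,q}$ as a stabilization.

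For the inside bypass, your proposed approach cannot work. You want to argue that the bypass is disjoint from $N'$, and you propose to ``arrange from the start that $A_i$ is always contained in a collar $\partial N'\times[0,1]$.'' But Colin's discretization gives no such control: the smooth isotopy from $A$ to $A'$ is dictated by the requirement that $A'$ miss the overtwisted disk $D$, and the intermediate $A_i$ (and especially the bypass disks, which are not contained in the annuli) may wander anywhere in $M\setminus N$. There is no reason the bypass disk avoids $N'$. The paper takes an entirely different route here: it does a case split on whether the dividing slope $r$ of $T_{i-1}$ is an integer. If $r\in\mathbb{Q}\setminus\mathbb{Z}$, Corollary~\ref{lemma:rational} gives a unique sub-solid-torus of slope $\lfloor r\rfloor$ containing $N'$, and any inside bypass lands at a slope in $[\lfloor r\rfloor,r]$, so $S_i$ still contains that unique torus. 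If $r\in\mathbb{Z}$, the argument uses Corollary~\ref{subtori}: the inclusion $N'\subset S_{i-1}$ is encoded by a signed path, and the only way an inside bypass could lose $N'$ is if that path has all one sign and the bypass has the opposite sign; this is then ruled out by another stabilization contradiction via Lemma~\ref{seestab} (split into three sub-cases according to where $q/p$ sits relative to $n-1$ and $n$). None of this is the ``geometric disjointness'' you are aiming for; it is structure theory of tight contact structures on solid tori combined with the fact that $L_{p,q}$ must sit on every $T_i$.
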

We will prove this lemma below, but assuming it now, we complete the proof of Theorem~\ref{thm:poscable}. The lemma tell us that $S_n$ contains $N'$ and is tight. Notice that this implies the complement of $A_n$ in $\overline{(M-N)}$ is tight, contradicting the fact that the overtwisted disk is disjoint from $A_n$. Indeed to see this notice that the complement of $A_n$ in $\overline{(M-N)}$ consists of the complement of $S_n$ and a component contained in $S_n$. So our claim is true if the contact structure on $S_n$ and its complement are both tight. Now since $N'\subset S_n$ and the complement of $N'$ is tight so is the complement of $S_n$ and Lemma~\ref{claim1} says $S_n$ is tight. Thus the overtwisted disk $D$ cannot exist.  
\end{proof}

\begin{proof}[Proof of Lemma~\ref{claim1}]
We inductively assume that $S_{i-1}$ is tight and contains $N'$ (this is clearly true for $S_0$). We first notice that $S_i$ does not contain any convex tori parallel to its boundary of slope $\infty$. Indeed, since $S_{i-1}$ is tight, it has no such tori and thus if the bypass to get to $S_i$ was attached on the inside of $S_{i-1}$ then clearly $S_i\subset S_{i-1}$ has the claimed property. Now if the bypass is attached from the outside, then $S_{i-1}\subset S_i$ and we argue as follows. Suppose there is a convex torus $T$ with dividing slope $\infty$, notice that $S_i\setminus N'$ is contained in the complement of $N'$ and so is a tight thickened torus and $T$ separates it into two pieces $B$ and $B'$ with $B$ being a basic slice with one boundary component $T_0=\partial N'$ having dividing slope $\tb(L)$ and the other boundary component $T$ having dividing slope $\infty$. Notice $T$ is also a boundary component of $B'$, the other boundary component is $\partial S_i$. Moreover $S_{i-1}$ is contained in $N'\cup B$ and there is a convex torus $T'$ in $B$ with dividing slope $q/p$. Now let $L$ be a Legendrian divide on $T'$ and $L'$ be a Legendrian ruling curve on $\partial S_i$.  Suppose the sign of the basic slice $S_i\setminus S_{i-1}$ is positive (so the paths in the Farey graph from $T_0$ to $T'$ and from $T'$ to $\partial S_i$ are all positive). Then by Lemma~\ref{seestab} we see that $L'$ is obtained from $L$ by some number of positive stabilizations, while $L_{p,q}$ (the ruling curve on $T_0$) is obtained from $L$ by some number of negative stabilizations. Since $L_{p,q}$ sits on $\partial S_i$, Lemma~\ref{canstablize} implies that it is also a further stabilization of $L'$. However, this is not possible since we noted above that $L_{p,q}$ is obtained from $L$ by strictly negative stabilizations. Thus we have our claim that $S_i$ cannot contain a convex torus with dividing slope $\infty$.  

We now notice that the contact structure on $S_i$ is tight if it contains $N'$. To this end notice that $S_i\setminus N'$ is contained in the complement of $N'$ and so the contact structure here is tight. That is we have a tight contact structure on a thickened torus and we recover $S_i$ from it by gluing in the tight contact structure on $N'$ (recall $\partial N'$ has longitudinal dividing curves and so there is a unique contact structure on it). Gluing a tight contact structure on a thickened torus to a tight structure on a solid torus with longitudinal dividing curves will always produce a tight contact structure unless the thickened torus contains a convex torus with dividing slope $\infty$, but that is ruled out by our argument in the previous paragraph. So the contact structure on $S_i$ is tight if $N'\subset S_i$. 

Now $S_i$ is obtained by attaching a bypass to $\partial S_{i-1}$ from either the inside or outside of $S_{i-1}$. If the bypass is attached from the outside, then clearly $S_i$ contains $S_{i-1}$ and by induction it also contains $N'$ and we are done. So we are left to consider the case when the bypass is attached from the inside. In this case there are two cases to consider. If $r$ is the dividing slope of $\partial S_{i-1}$ then $r$ is either an integer or a non-integer rational number. 

If $S_{i-1}$ has dividing slope $r\in \Q\setminus \Z$ then there is a unique torus $S'$ with convex boundary with dividing slope $\lfloor{r}\rfloor$ and that torus in turn contains $N'$ by Corollary~\ref{lemma:rational}. Now $T_i$ is obtained from $T_{i-1}=\partial S_{i-1}$ by a bypass attached from inside $S_{i-1}$. Thus the dividing slope of $T_i$ will have an edge in the Farey graph to $r$. Thus it must be in the interval $[\lfloor{r}\rfloor, r]$. In particular it will contain a torus $T''$ with convex boundary with dividing slope $\lfloor{r}\rfloor$ which must be isotopic to $S'$ since there is a unique such torus in $S_{i-1}$. Thus $S_i$ will contain $N'$. 
	
We are now left to consider the case when the dividing slope of $\partial S_i$ is an integer $n$. By Corollary~\ref{subtori} we know the solid torus $N'$ in $S_{i-1}$ is determined by a signed path in the Farey graph associated to the contact structure on $S_{i-1}\setminus N'$. Now suppose the dividing slope on $S_i$ is also an integer (which must be $n-1$). If the signed path associated to $S_{i-1}\setminus N'$ contains both signs then no matter what sign is associated to $S_i\setminus S_{i-1}$ we know that $S_i$ will have to still contain $N'$ by Corollary~\ref{subtori}. So we now must consider the case that all the signs of the signed path associated to $S_{i-1}\setminus N'$ are the same, say positive, and opposite to that of $S_i\setminus N'$. We claim that cannot happen. To see this we consider three cases: (1) $q/p>n,$ (2) $q/p<n-1,$ and (3) $n-1<q/p<n$.  In Case~(1), let $L$ be a ruling curve of slope $q/p$   on $\partial S_{i-1}$ (by which we mean, isotop $\partial S_{i-1}$ through convex surfaces so that it has ruling curves of slope $q/p$) and $L'$ a ruling curve on $\partial S_i$ of slope $q/p$. Then Lemma~\ref{seestab} shows that $L'$ is obtained from $L$ by positive stabilizations, while $L_{p,q}$ is obtained from $L$ by negative stabilization, but Lemma~\ref{seestab} shows that since $L_{p,q}$ sits on $\partial S_i$ that it is obtained from $L'$ by some number of stabilizations (that could be positive or negative). This is a contradiction since $L_{p,q}$ cannot be obtained from $L$ by strictly negative stabilizations, but also by some positive and negative stabilizations. A similar argument works in Case~(2) and~(3). 
\end{proof}

\subsection{Negative cables of non-loose knots} 
In this subsection we prove Theorem~\ref{thm:negcable} that says 
the standard cable $L_{p,q}^\pm$ is non-loose for all $p/q\in(\tb(L)-1, \tb(L))$ if $S_\pm(L)$ is non-loose.
\begin{proof}[Proof of Theorem~\ref{thm:negcable}]
We begin by considering the case when $S_+(L)$ is non-loose (the case for $S_-(L)$ is similar). For $p/q\in (tb(L)-1, tb(L))$ let $L^+_{p,q}$ be the standard $(p,q)$-cable of $L$. Let $S$ be a standard neighborhood of $L$ and $S'$ a standard neighborhood of $S_+(L)$. By definition $L^+_{p,q}$ is a Legendrian divide on a convex torus in $S\setminus S'$.

Assume for contradiction that $L^+_{p,q}$ is loose. So the contact structure on the complement $X$ of a standard neighborhood of $L^+_{p,q}$ is overtwisted. Let $T_0=\partial S$. Notice that $T_0$ breaks $X$ into two pieces $C$, the complement of $S$, and $B$, containing $\partial X$, and the contact structure restricted to these pieces is tight (since $L$ is non-loose and $B$ is contained in $S$). We know that $T_0$ can be smoothly isotoped to be disjoint from an overtwisted disk and thus by Colin's discritization of isotopy \cite{Honda02} we know there is a sequence of convex tori $T_0,\ldots, T_k$ such that $T_{i+1}$ is obtained from $T_i$ by attaching a bypass and $T_k$ has an overtwisted disk in its complement. The $T_i$ cut $X$ into two pieces, one $C_i$ is diffeomorphic to $C$ and the other $B_i$ that is diffeomorphic to $B$.

We will now inductively show that the complement of $T_i$ is tight, contradicting the existence of an overtwisted disk. More precisely we will inductively show that either (1) the slope of the dividing curves on $T_i$ is greater than or equal to $\tb(L)$, in which case $B_i$ contains a convex torus $T_i'$ contact isotopic to $T_0$, or (2) the slope of the dividing curves on $T_i$ is less than or equal to $\tb(L)$, in which case the slope of the dividing curves is in $(q/p, \tb(L)]$ and $B_i$ contains a convex torus $T_i''$ contact isotopic to $\partial S'$.  

We first observe that our inductive hypothesis implies that the complement of $T_i$ is tight. Indeed, in Case~(1) we know that the complement of $T_0$, and hence $T_i'$, in $M$ is tight and $C_i$ is contained in this tight complement so it is tight. Let $S_i$ be the solid torus bounded by $T_i'$ in $M$ (and containing $\partial X$). The thickened torus $A_i$ cobounded by $T_i$ and $T_i'$ is tight (since it is in the complement of $T_i'$ which is contact isotopic to $T_0$) and so $B_i$ is obtained from $S_i$ (which has the unique tight structure on a solid torus with longitudinal dividing curves) by gluing $A_i$ and removing a neighborhood of $L^+_{p,q}$, which is of course tight. Thus the complement of $T_i$ is tight in this case. In Case~(2) let $S'_i$ be the solid torus bounded by $T''_i$. Since $T''_i$ is contact isotopic to $\partial S'$ we know $S'_i$ is a tight solid torus and its complement in $M$ is also tight. Since $C_i$ is contained in this complement, we know the contact structure on $C_i$ is tight. The thickened torus $A_i$ between $T''_i$ and $T_i$ in $M$ is tight since it is in the complement of $S''_i$ and as above we see that the solid torus bounded by $T_i$ it tight and $B_i$ is a subset of that. 

Suppose $T_i$ satisfies Case~(1) of the inductive hypothesis. if the bypass used to go from $T_i$ to $T_{i+1}$ is attached from the outside of $S_i$ then clearly $T_{i+1}$ also satisfies Case~(1) of the inductive hypothesis. If the bypass is attached from the inside, then an argument just like in the proof of Lemma~\ref{claim1} shows that $T_{i+1}$ also satisfies the inductive hypothesis if the slope $r_i$ of the dividing curves on $T_i$ is in $\Q\setminus \Z$. If $r_i=\tb(L)$, then we can consider that $T_i$ satisfies Case~(2) of the inductive hypothesis and proceed as in that case. Now assume that $r_i$ is an integer larger than $\tb(L)$. So the solid torus $S_i$ is a standard neighborhood of a Legendrian knot $L'$.  Again as in the proof of Lemma~\ref{claim1} we see that if the path in the Farey graph describing the contact structure on the thickened torus $A_i$ between $T_i$ and $T'_i$ contains both signs, then $T_{i+1}$ will satisfy the inductive hypothesis. So we assume that all the signs are the same, say $+$, and the bypass used to get $T_{i+1}$ from $T_i$ is a negative bypass. Then we know inside of $S_{i+1}$ there is a torus $T$ with dividing slope $r_i-1$ and the basic slice between $T_i$ and $T$ is negative. The solid torus $S''$ bounded by $T$ is a standard neighborhood of a Legendrian $L''$ and $L''=S_-(L')$. Moreover, we know $L$ is obtained from $L'$ by some number of positive stabilizations. Inside $S''$ we see a convex torus $T'$ with dividing slope $q/p$ and $L^+_{p,q}$ is a Legendrian divide on this torus. Thus between $T$ and $T'$ there is a convex torus $T''$ with two dividing curves of slope $\tb(L)$. The solid torus bounded by $T''$ is a neighborhood of a Legendrian knot $L''$ and since $L^+(p,q)$ is contained in this solid torus we see that it is also a standard cable of $L''$, but $L''$ is obtained from $L'$ by at least one negative stabilization. Thus the rotation number of $L''$ and $L$ are different. This will mean the rotation number of their standard $(p,q)$-cable will be different. This contradiction implies that the sign of the bypass going from $T_i$ to $T_{i+1}$ must agree with the sign of the basic slices in $A_i$, then again as in the proof of Lemma~\ref{claim1} we see that $T_{i+1}$ satisfies the inductive hypothesis. 

We now suppose that $T_i$ satisfies Case~(2) of the inductive hypothesis. If the slope $r_i$ of $T_i$ is $\tb(L)$ then notice $T_i$ must be contact isotopic to $T_0$  (since $T''_i$ is contact isotopic to $\partial S'$) and so if the bypass attached to $T_i$ to get $T_{i+1}$ is attached from the outside then we will clearly satisfy Case~(1) of the hypothesis. If the bypass is attached from the inside then we can proceed as below. 
If $r_i$ is not $\tb(L)$ then if the bypass is attached from the outside of $S_i$, then $T_{i+1}$ will clearly satisfy the inductive hypothesis since attaching the bypass will result in the dividing curves on $T_{i+1}$ having slope in $(r_i, \tb(L)]$. 

We now consider a bypass attached from the inside, and notice that the slope $r_{i+1}$ of the dividing curves on $T_{i+1}$ must be in $(q/p,r]$. To see this suppose it were not. Let $S_{i}$ be the solid torus $T_{i}$ bounds in $M$. If $r_{i+1}$ is less than $q/p$ then the torus $S_{i+1}$ contains a neighborhood of $L^+_{p,q}$ and hence in $S_{i+1}$ we can find a convex torus $T'$ containing $L^+_{p,q}$ and the slope of the dividing curves on $T'$ must be $q/p$, but this implies that the contact structure on $S_{i+1}$ is overtwisted and hence the one on $S_i$ is too, but we argued above that since $T_i$ satisfies the inductive hypothesis, $S_i$ is tight. Thus we know $r_{i+1}\in(p/q,r_i]$. 

Assume for now that $T_i$ has only two dividing curves. Since the contact structure on $S_i$ is tight, we know it is given by a signed minimal path in the Farey graph from $\tb(L)-1$ to $r_i$. Said another way, if $A_i$ is the thickened torus cobounded by $T_i$ and $T''_i$ in $M$, then that path describes the tight contact structure on $A_i$ and there is a unique contact structure on the solid torus bounded by $T''_i$. There is a unique convex torus (up to contactomorphism) in $A_i$ with dividing slope $q/p$ and two diving curves and $L^+_{p,q}$ is a neighborhood of one of the Legendrian divides. Now $T_{i+1}$ is a convex torus in $S_i$. If $T_{i+1}$ has more than two dividing curves then it is contained in an $I$-invariant neighborhood of $T_i$ and hence satisfies the inductive hypothesis. If $T_{i+1}$ has two dividing curves then it splits $A_i$ into two thickened tori and each has a contact structure described by paths in the Farey graph that when combined can be shortened to the one describing the contact structure on $A_i$. Thus inside of $S_{i+1}$ we can also find the unique convex torus $T$ with two dividing curves and slope $q/p$. This will be the same torus (up to contactomorphism) as the one in $S_i$ and hence in the solid torus bounded by  $T$, there is a torus contact isotopic to $\partial S'$ and hence $T_{i+1}$ satisfies the inductive hypothesis. 

Now if $T_i$ has more than two dividing curves then inside of $T_i$ is a torus $T$ with the same dividing slope and just two dividing curves and $T''_i$ is contained in the solid torus bounded by $T$. The slope of the dividing curves on $T_{i+1}$ will be the same as the slope of those on $T_i$ (though the number of dividing curves can change by two) and in $S_{i+1}$ there is a convex torus contact isotopic to $T$. Thus $T_{i+1}$ satisfies the inductive hypothesis. 
\end{proof}

We end this section by seeing how the standard positive and standard negative cables relate. 
\begin{proof}[Proof of Proposition~\ref{relation}]
This is a restatement of Lemma~\ref{seestab}.
\end{proof}

\subsection{Cables of transverse knots}
In this section we prove Theorem~\ref{transversecable} that shows the standard cable of a transverse knot is non-loose if and only if the original knot is non-loose.
\begin{proof}[Proof of Theorem~\ref{transversecable}]
Assume that $K$ is a non-loose transverse knot and let $L_n$ be its Legendrian approximations as discussed when we defined the standard cable of $K$ just before the statement of Theorem~\ref{transversecable}. We know that since $K$ is non-loose then all the $L_n$ are too, see \cite[Proposition~1.2]{Etnyre13}. Notice that by Theorem~\ref{thm:poscable} we know that $(L_n)_{p,q}$ are all non-loose. If $K_{p,q}$ is loose then its Legendrian approximations with sufficiently negative Thurston-Bennequin invariant are also loose, but for very negative $n$, $(L_n)_{p,q}$'s are Legendrian approximations with arbitrarily negative Thurston-Bennequin invariant that are all non-loose. Thus $K_{p,q}$ is non-loose.

If $K$ is loose then all of its Legendrian approximations $L_n$ with sufficiently negative $n$ are loose. Thus $(L_n)_{p,q}$ are all loose and so their transverse push-off is also loose, see \cite{Etnyre13}. 
\end{proof}

\subsection{Transversely non-loose and non-simple knots}
In this section we prove Theorem~\ref{thm:transnonsimple} that says in the contact manifold $(S^3,\xi_2)$ the $(2n+1,2)$-cable of the left handed trefoil has at least $n$ non-loose transverse representatives with $\self=2n+3$. 

Before giving the proof, we recall some results from \cite{EtnyreMinMukherjee22pre}. Let $C$ be the complement of a neighborhood of the left-handed trefoil in $S^3$. In Lemma~6.2 of \cite{EtnyreMinMukherjee22pre} it was shown there exist tight contact structures $\xi_n, n\in \N,$ on $C$ such that $\partial C$ is convex with two dividing curves of slope $1/n$ that have the property that any convex torus parallel to $\partial C$ has dividing slope $1/n$. For $n=1$ we can glue in a solid torus $N$ with a tight contact structure (which is unique since the dividing curves are longitudinal) to get a contact structure on $S^3$ and the solid torus is a standard neighborhood of the Legendrian knot $L$. Theorem~7.22 in \cite{EtnyreMinMukherjee22pre} shows that this contact structure is $\xi_2$.

Now for $n>1$ we can glue a solid torus to $(C,\xi_n)$, but now there are two choices for the solid torus $N_n^\pm$ (depending on the sign of the bypass on the thickened torus with slopes $0$ and $1/n$). According to the Lemma~6.2 of \cite{EtnyreMinMukherjee22pre} all these tori are ``non-thickenable" meaning that any solid torus containing them (and smoothly isotopic to them) has dividing slope $1/n$. Repeating the proof of Theorem~1.12 in \cite{EtnyreLaFountainTosun12} shows that if $S$ is any solid torus in $N_n^\pm$ that is smoothly isotopic to $N_n^\pm$ and has convex boundary with dividing slope in $(0,1/n)$ then $S$ cannot be thickened to a solid torus with boundary having dividing slope larger than $1/n$. Moreover, if $S$ is a solid torus in $N_n^\pm$ as above but with dividing slope equal to $0$ then it will thicken to $N$. Thus all the contact structures on $S^3$ constructed from the $\xi_n$ by gluing in $N_n^\pm$ are $\xi_2$, so all the non-thickenable tori are in one fixed contact structure. 

\begin{proof}[Proof of Theorem~\ref{thm:transnonsimple}]
First fix $n \in \mathbb{N}$. Then inside any $N_k^-$ in $(S^3,\xi_2)$ with $k\leq n$, there is a convex torus parallel to the boundary with two dividing curves of slope $2/(2n+1)$. Notice from above, this torus bounds a solid torus that cannot thicken past $N_k^-$. Let $L_k$ be a Legendrian divide on this torus. The contact framing of $L_k$ agrees with the torus framing so it has $\tb(L_k)=4n+2$. Using Lemma~\ref{lemma:rot} we see that $\rot(L_k)=2n-1$. In the proof of Theorem~1.7 in \cite{EtnyreLaFountainTosun12}, it is shown that the only convex torus on which $S_-^l(L_k)$ can sit is $\partial N_k^-$ and a simple state transition argument given there shows that all the $L_k$ are distinct as are all their negative stabilizations. Moreover, this shows that the transverse push-off of $L_k$, which we denote $K_k$, is a non-loose transverse knot and none of the $K_k$ are transversely isotopic. Also, we know $\self(K_k)=\tb(L_k)-\rot(L_k)=2n+3$. 

For any $k \in \mathbb{N}$, all $N_k^-$ are universally tight and the above state transition argument tells us that any torus that $L_k$ sits on should have a universally tight neighborhood. According to \cite[Theorem~1.10]{ChakrabortyEtnyreMin20Pre} (the theorem is about tight contact manifolds, but the similar argument works for overtwisted contact manifolds too, see also \cite{McCullough18}), The necessary and sufficient condition for $L_k$ to destabilize is that it sits on a mixed torus, which has a virtually overtwisted neighborhood. This implies that $L_k$ does not destabilize and the complement of a standard neighborhood of $L_k$ does not contain (half) Giroux torsion. Since $K_k$ is a push-off of $L_k$, the complement of $K_k$ also does not contain (half) Giroux torsion.
\end{proof}

\end{document}